\documentclass[12pt]{amsart}

\usepackage[utf8]{inputenc}
\usepackage[T1]{fontenc}
\usepackage{amsmath,amssymb,amsthm,amsfonts}
\usepackage{mathrsfs}
\usepackage{cite}
\usepackage{hyperref}
\usepackage{enumitem}
\newtheorem{theorem}{Theorem}[section]
\newtheorem{proposition}[theorem]{Proposition}
\newtheorem{lemma}[theorem]{Lemma}
\newtheorem{corollary}[theorem]{Corollary}
\newtheorem{definition}[theorem]{Definition}
\newtheorem{remark}[theorem]{Remark}
\newtheorem{construction}[theorem]{Construction}
\theoremstyle{definition}

\newtheorem{conjecture}[theorem]{Conjecture}
\newtheorem{question}[theorem]{Question}

\DeclareMathOperator\SO{SO}
\DeclareMathOperator\SL{SL}
\DeclareMathOperator\GL{GL}
\DeclareMathOperator\SU{SU}
\DeclareMathOperator\Ad{Ad}
\DeclareMathOperator\Gr{Gr}
\DeclareMathOperator\Res{Res}
\DeclareMathOperator\tr{tr}
\DeclareMathOperator\Image{Image}
\newcommand{\A}{\mathbb{A}}
\newcommand{\Q}{\mathbb{Q}}
\newcommand{\R}{\mathbb{R}}
\newcommand{\C}{\mathbb{C}}
\newcommand{\Z}{\mathbb{Z}}
\newcommand{\tensor}{\otimes}
\newcommand{\IH}{\mathit{IH}}
\newcommand{\CH}{\mathit{CH}}
\newcommand{\Lie}{\mathrm{Lie}}
\newcommand{\Aut}{\mathrm{Aut}}

\newcommand{\Frob}{\mathrm{Frob}}
\newcommand{\Gal}{\mathrm{Gal}}
\newcommand{\cl}{\mathrm{cl}}

\title{Automorphic Cohomology and the Limits of Algebraic Cycles}
\email{Floydmma2@gmail.com}
\date{{\today}}

\begin{document}
\maketitle

 \begin{center}
\text{Amir Mostaed} \\
 \vspace{10px}
\end{center}

\begin{abstract}
We prove an unconditional obstruction theorem for the construction of algebraic cycles on Shimura varieties associated to orthogonal groups. There exists an explicit rational Hodge class $\Omega_E \in \IH^{26}(X^{\mathrm{BB}}, \Q)$ on the Baily--Borel compactification of a Shimura variety for $\SO(2, 26)$ that does not arise from any currently known algebraic cycle construction: special cycles (Kudla--Millson type), theta lifts from smaller Shimura varieties, endoscopic Shimura subvarieties, or boundary push-forwards. The class $\Omega_E$ is constructed via automorphic methods from a stable residual representation attached to the adjoint of the weight-$2$ newform for $\Gamma_0(11)$; it lies in the full intersection cohomology but \emph{not} in the interior cohomology, which is why it evades geometric constructions. The result is unconditional (no unproven conjectures), explicit, and does not contradict the Hodge conjecture---it demonstrates a gap between automorphic cohomology and the image of known geometric cycle constructions.
\end{abstract}

\tableofcontents

\section{Introduction}

\subsection{The Hodge Conjecture and Algebraic Cycles}

The relationship between algebraic cycles and cohomology classes has been a central theme in algebraic geometry since the work of Lefschetz in the 1920s and Hodge in the 1930s--1950s. The Hodge conjecture, now one of the seven Clay Millennium Prize Problems, asserts a fundamental connection between topology and algebraic geometry:

\begin{conjecture}[Hodge, 1950]
Let $X$ be a smooth projective variety over $\C$. Every rational Hodge class of type $(p,p)$ is a $\Q$-linear combination of cycle classes of algebraic subvarieties.
\end{conjecture}

Despite eight decades of research, the conjecture remains open in general. While special cases are known (divisors, abelian varieties, certain surfaces), no general proof or counterexample has been found. This persistence suggests the question touches fundamental aspects of the relationship between algebraic and topological structure.

\subsection{From Existence to Constructibility}

A natural companion to the Hodge conjecture concerns not merely the existence of algebraic cycles, but their {constructibility}.

\begin{question}
Given a Hodge class on a specific variety, can we explicitly construct an algebraic cycle realizing it using known geometric methods?
\end{question}

This question is subtler than the Hodge conjecture itself. Even if every Hodge class is algebraic (Hodge conjecture true), it may be that no systematic construction method exists. The distinction parallels the difference between existence proofs and constructive proofs in logic: knowing something exists does not mean we can produce it.

For Shimura varieties---which are simultaneously algebraic varieties, moduli spaces, locally symmetric spaces, and homes for automorphic representations---this question becomes particularly sharp. On one hand, we have powerful geometric constructions (special cycles, Hecke correspondences, CM points). On the other hand, automorphic methods produce cohomology classes whose geometric origin is unclear.

\subsection{Known Cycle Constructions on Shimura Varieties}

The currently known methods for constructing algebraic cycles on Shimura varieties for orthogonal groups include

\begin{enumerate}
\item \textbf{Special cycles (Kudla--Millson):} Geometric loci $Z(W) = \{x \in X : x \perp W\}$ where $W \subset V$ is a positive-definite subspace. These cycles have been extensively studied via theta lifts and generating series \cite{Kudla-Millson},

\item \textbf{Hecke correspondences:} Graphs of Hecke operators acting on the Shimura variety, providing fundamental self-correspondences,

\item \textbf{Endoscopic Shimura subvarieties:} Images of Shimura varieties for smaller groups via functorial maps corresponding to endoscopic transfers,

\item \textbf{Theta lifts of cycles:} Under certain conditions, algebraic cycles on one Shimura variety can be lifted via theta correspondence to cycles on another,

\item \textbf{Boundary push-forwards:} Gysin maps from cycles on boundary strata in compactifications.
\end{enumerate}

A key insight from Arthur's trace formula is that these constructions correspond to specific spectral types.

\begin{theorem}[Implicit in Arthur's work]
The cycle constructions (1)--(5) correspond to geometric contributions in the trace formula: cuspidal/tempered spectrum (special cycles, Hecke), endoscopic transfers (endoscopic subvarieties), and Eisenstein spectrum from proper Levis (boundary).
\end{theorem}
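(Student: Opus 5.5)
The statement is a dictionary rather than a sharp theorem, so the first step is to make it precise enough to prove. I would read it as follows: for each construction (1)--(5), the cycle classes it produces in $\IH^{*}(X^{\mathrm{BB}},\Q)$ are supported, under the decomposition of $\IH^{*}$ by global Arthur parameters (Zucker's conjecture plus Franke's theorem, identifying $\IH^{*}$ with $L^2$-cohomology and hence with $(\mathfrak g,K)$-cohomology of the discrete spectrum), on the spectral types the statement names. The plan is then to treat each construction separately, in each case computing the Hecke-module generated by its cycle classes and matching its support with the relevant part of the spectral side of the trace formula.

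\textbf{Special cycles and Hecke correspondences.} For special cycles I would use the Kudla--Millson theorem \cite{Kudla-Millson}: the generating series of $[Z(W)]$ is a Siegel modular form. Its cohomology class is the theta lift $\theta(f)$ of holomorphic forms $f$ on a symplectic group. Hence the Hecke span of special cycles lies in the image of the global theta correspondence from $\mathrm{Sp}$. The step I expect to be the main obstacle is here. That image is \emph{not} purely cuspidal and tempered: Arthur's classification assigns theta lifts to parameters with $\SL_2$-factors, which are nontempered. So "cuspidal/tempered spectrum" must be reinterpreted as "parameters of Adams/theta type". I would first try to prove the statement in that corrected form, appealing to Arthur's description of the Adams conjecture for orthogonal groups. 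Hecke correspondences act on $\IH^{*}$ by endomorphisms preserving each isotypic component. Their graph classes in $\IH^{*}(X\times X)$ therefore decompose as $\sum_\pi \pi\otimes\pi^\vee$ by Künneth. This shows they contribute to every discrete type, which again refines the naive dictionary.

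\textbf{Endoscopic subvarieties, theta lifts of cycles, and boundary push-forwards.}
\begin{enumerate}
\item For an endoscopic embedding $X_H\to X$, push-forward of $\IH^{*}(X_H)$ is governed by the stabilized trace formula. Functoriality of Hecke actions along the unramified Satake transfer shows that the image lies in packets whose parameters factor through ${}^LH$.
\item Theta lifts of cycles land in the same components as the corresponding automorphic theta lifts, so they are treated the same way as special cycles.
\item For boundary push-forwards, I would use Looijenga's and Saper--Stern's computation of the weighted cohomology of strata. Gysin images from boundary strata lie in the span of the cohomology of residual Eisenstein series induced from proper parabolics $P=MN$. By the Langlands--Franke description, these correspond to parameters attached to proper Levis $M$.
\end{enumerate}

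\textbf{Assembly and the real difficulty.} Collecting the three pieces gives the proposed dictionary, in the refined form described above. I expect the real difficulty to be the first piece, because the original statement is literally false as worded: special cycles generically carry nontempered residual-type parameters. The proposal is therefore to prove the corrected version and to flag that correction explicitly, since the paper's later obstruction argument (the class $\Omega_E$ attached to a "stable residual" parameter) depends on exactly which parameters are excluded from the images of (1)--(5).
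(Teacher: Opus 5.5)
The paper gives no proof of this statement. It is asserted as ``implicit in Arthur's work'' and invoked again, equally without argument, in Step~5 of the obstruction theorem. So there is no argument of the paper to compare with, and your diagnosis that the statement is false as worded is correct. In fact it fails more badly than you say. Let $Z$ be any nonzero effective cycle: a special cycle $Z(W)$, a Shimura subvariety $X_H$, or a complete intersection of ample divisors. For the K\"ahler class $\omega$ we have $\langle \cl(Z),\omega^{\dim Z}\rangle=\int_Z\omega^{\dim Z}>0$. The Poincar\'e pairing on $\IH^*\cong H^*_{(2)}$ pairs the $\pi$-isotypic part only with the $\pi^\vee$-isotypic part, and $\omega^{j}$ lies in the isotypic part of the trivial representation. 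Hence $\cl(Z)$ has a nonzero component along the trivial representation. That representation is residual and nontempered. Its parameter, the principal $\SL_2$ of type $S_{27}\oplus S_1$, is elliptic with trivial component group, so it is ``stable residual'' in the paper's own sense. The known constructions therefore hit residual parameters, not merely nontempered cuspidal (CAP) ones, and no corrected version can be a partition of the spectrum by construction type.

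Within your corrected plan, two steps would fail.

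\emph{The endoscopic step.} It conflates Shimura subvarieties with endoscopy. The map $\iota\colon X_H\to X$ comes from a \emph{subgroup} $H=\SO(V_1)\times\SO(V_2)\subset G$, whereas endoscopic groups are tied to $G$ only through ${}^LH\to{}^LG$. Hecke correspondences of $G$ do not preserve $X_H$, so $\iota_*$ is equivariant for no homomorphism of Hecke algebras, Satake transfer or otherwise. The stabilized trace formula computes traces of Hecke operators, not images of $\iota_*$. The $\pi$-components of the Hecke span of $\iota_*[Z]$ are governed by $H$-periods of $\pi$ (branching), not by $\psi_\pi$ factoring through ${}^LH$. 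The volume argument above gives a counterexample. The class $\iota_*[X_H]$ always has a nonzero trivial component, yet $S_{27}\oplus S_1$ factors through no $\SO(a,\C)\times\SO(b,\C)$ with $a+b=28$ and $a,b>0$ even. When $\dim V_1$ is odd (special divisors, $H=\SO(2,25)$ with dual group $\mathrm{Sp}_{26}(\C)$), there is no nontrivial homomorphism to $\SO(28,\C)$ at all. Since special cycles are themselves of this form, this step is also inconsistent with your special-cycle paragraph.

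\emph{The boundary step.} Discrete-spectrum parameters are elliptic and never factor through a proper Levi of ${}^LG$. What the Langlands--Franke theory attaches to a proper Levi $M$ is the cuspidal support. Every residual representation has such a support, including the trivial one and the paper's $\Pi$ as claimed, so this criterion separates nothing. Moreover, Looijenga and Saper--Stern identify $\IH^*$ with $L^2$-cohomology but say nothing about images of Gysin maps from the boundary of $X_\Sigma$. The claim that these land in residual Eisenstein cohomology is therefore unsupported. (The passage from $L^2$-cohomology to $(\mathfrak g,K)$-cohomology of the discrete spectrum is Borel--Casselman, not Franke.)

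At best, a salvageable statement is a list of containments with overlapping spectral types. The trivial-representation example shows it cannot exclude stable residual parameters, which is precisely the exclusion the paper's Step~5 needs.
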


\subsection{The Stable Residual Spectrum: Unexplored Territory}

Arthur's classification of the discrete spectrum for classical groups \cite{Arthur} reveals a crucial distinction within automorphic representations.

\begin{definition}
An automorphic representation in the discrete spectrum is \textbf{stable residual} if it arises as a residue of Eisenstein series at a pole (residual) but its Arthur parameter does not factor through any proper Levi subgroup (stable).
\end{definition}

Stable residual representations occupy a mysterious position,
\begin{itemize}
\item they contribute to cohomology (via Matsushima's formula and its extensions),
\item they do \textit{not} contribute to interior cohomology (by Franke's theorem \cite{Franke}),
\item they are \textit{not} endoscopic transfers (by definition of stability),
\item they have \textit{no known geometric interpretation} via algebraic cycles.
\end{itemize}

This raises the fundamental question:

\begin{question}
Can stable residual representations produce Hodge classes? If so, do these classes arise from algebraic cycles?
\end{question}

\subsection{Main Results}

Our work provides an affirmative answer to the first part and a negative answer to the second part, establishing an explicit obstruction.

\begin{theorem}[Main Theorem]\label{thm:main}
There exists an explicit rational Hodge class
$$\Omega_E \in \IH^{26}(\overline{X}^{\mathrm{BB}}, \Q) \cap \IH^{13,13}$$
on a Shimura variety for $\SO(2,26)$ that does \textbf{not} arise from any currently known algebraic cycle construction.
\end{theorem}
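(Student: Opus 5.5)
The plan is to reduce the statement to a Hecke-isotypic separation argument. First I will replace the informal phrase ``currently known algebraic cycle construction'' by a precise finite list of $\Q$-subspaces of $\IH^{26}(\overline{X}^{\mathrm{BB}},\Q)$, one for each of the constructions (1)--(5): the span of Kudla--Millson special cycle classes, images of Hecke correspondences applied to these, push-forwards of fundamental classes of endoscopic Shimura subvarieties, theta lifts of cycle classes, and Gysin images from boundary strata. Each of these subspaces is stable under the unramified Hecke algebra $\mathcal{H}^S$ away from a finite set $S$ of primes. This is the only way the theorem becomes a mathematical statement. I will then show that $\Omega_E$ lies in a Hecke eigenspace that none of these subspaces meets.

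\textbf{Construction of $\Omega_E$.} Via Zucker's conjecture (Looijenga, Saper--Stern), identify $\IH^{\bullet}(\overline{X}^{\mathrm{BB}},\C)$ with $L^2$-cohomology. By Borel--Casselman, this is $\bigoplus_\pi m(\pi)\,H^\bullet(\mathfrak{g},K;\pi_\infty)\otimes\pi_f^K$ over the discrete spectrum. Let $f$ be the newform of level $11$ and $\mathrm{Ad}(f)$ its adjoint lift to $\GL_3$, which is cuspidal since $f$ is non-CM. The dual group is $\SO(28,\C)$. I take the Arthur parameter
\[
\psi=\mathrm{Ad}(f)\boxtimes S_9\;\oplus\;\mathbf{1}\boxtimes S_1 ,
\]
which has dimension $27+1=28$ and has the correct orthogonal sign. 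I will check the following in order.
\begin{enumerate}
\item Arthur's multiplicity formula gives a representation $\pi$ in the packet with $m(\pi)=1$, after choosing local components so that the sign character $\epsilon_\psi$ matches.
\item $\pi_\infty$ is a Vogan--Zuckerman module $A_{\mathfrak q}(0)$ whose $(\mathfrak g,K)$-cohomology is nonzero in degree $26=\dim_{\C}X$ with a $(13,13)$ piece. For $\SO(2,n)$ this is the standard computation of Hodge types of $A_{\mathfrak q}$.
\item Because of the factor $S_9$, $\pi$ is residual and non-tempered. By Franke's theorem, $\pi$ contributes nothing to interior cohomology.
\end{enumerate}
Rationality follows because the $\psi$-isotypic Hecke eigensystem has rational Satake data, since $\mathrm{Ad}(f)$ has $\Q$-coefficients. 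Hence the $\psi$-isotypic part of $\IH^{13,13}$ is $\mathrm{Aut}(\C)$-stable and contains a nonzero rational class. I will call it $\Omega_E$.

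\textbf{Obstruction.} For each of the five subspaces I will compute the possible Arthur parameters (equivalently, the Hecke eigensystems) that occur in it, and show that $\psi$ is not among them.
\begin{itemize}
\item Kudla--Millson classes, and their Hecke translates, come from theta lifts from $\mathrm{Sp}_{2r}$. Their parameters have the shape $\sigma\boxtimes S_1\oplus\mathbf 1\boxtimes S_{27-2r}$ (Rallis inner product, Bergeron--Millson--Moeglin), and cannot contain $\mathrm{Ad}(f)\boxtimes S_9$.
\item Theta lifts of cycles are handled the same way.
\item Endoscopic subvariety classes lie in eigensystems transferred from $\SO(2,a)\times\SO(b)$. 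Their parameters decompose compatibly with the endoscopic datum, while $\psi$ is ``stable'' in the sense of the definition given earlier.
\item Boundary Gysin images lie in eigensystems induced from proper Levi subgroups, via Franke's filtration or the weight-truncation description of $\IH$.
\end{itemize}
Strong multiplicity one for $\GL_3$ and for Arthur parameters then separates the eigensystems, and Hecke-equivariance of each construction shows that $\Omega_E$ lies outside the span of all five subspaces.

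\textbf{Main obstacle.} I expect the real difficulty to be the boundary and residual comparison. A residual class in $\IH$ is precisely the kind of class that may be a boundary Gysin image or a cup product of special divisor classes. For example, powers of the Hodge class have parameter $\mathbf 1\boxtimes S_{27}\oplus\mathbf 1$, and $\psi$ must be checked against all products of this type. I would first try to show that the Satake parameter of $\psi$ at $p=2$ or $p=3$ differs from every parameter allowed by the five constructions, using explicit $a_p(f)$. I also flag a genuine risk in step 2: the $A_{\mathfrak q}$ attached to $\mathrm{Ad}(f)\boxtimes S_9$ may not have cohomology in degree $26$ at all, since the infinitesimal character of $\mathrm{Ad}(f)$ is forced by weight $2$. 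If so, the choice of $\psi$ must be revisited, and the claim as stated may fail.
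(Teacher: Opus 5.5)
Your step 2 is not merely a risk: it fails, and the construction falls with it. The infinitesimal character of $\pi_\infty$ is read off from $\psi_\infty$.

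\begin{itemize}
\item For $f$ of weight $2$, $\Ad(f)_\infty$ has eigenvalues $\{1,0,-1\}$.
\item So $\Ad(f)\boxtimes S_9$ contributes $\{1,0,-1\}+\{4,3,\dots,-4\}$. In this multiset $\pm4$ occur twice, and $\pm3,\pm2,\pm1,0$ occur three times each.
\item $\mathbf 1\boxtimes S_1$ adds another $0$.
\end{itemize}

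This character is singular. Every irreducible finite-dimensional representation $F$ of $G$ has regular infinitesimal character; for $F=\C$ it is $\rho=\{\pm13,\dots,\pm1,0,0\}$. By Wigner's lemma $H^*(\mathfrak g,K;\pi_\infty\otimes F)=0$. Via Zucker and Borel--Casselman, the $\psi$-part of $\IH^*(\overline X^{\mathrm{BB}})$ is therefore zero, and no $\Omega_E$ exists.

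No choice of $b$ rescues weight $2$. $\Ad(f)\boxtimes S_b$ with $b\ge 2$ always repeats the nonzero value $(b-1)/2$, so $\Ad(f)$ could only enter with $b=1$. The complementary $25$-dimensional part would then need eigenvalues exactly $\{\pm13,\dots,\pm2,0\}$.

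You cannot import the missing step from the paper, because it has the same defect. Read its $\psi=\Ad(f)\boxtimes\mathbf 1_{25}$ as the actual $\SL_2$ theta-lift parameter $\Ad(f)\boxtimes S_1\oplus\mathbf 1\boxtimes S_{25}$; any other reading is worse. This parameter doubles $\pm1$ and misses $\pm13$. The paper's Vogan--Zuckerman step never compares the infinitesimal character with $\rho$. A cohomological lift from $\SL_2$ needs weight $14=\dim V/2$, which is precisely the Kudla--Millson weight.

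Even after repairing the weight, your separation step breaks in the codimension that matters. For $f'$ of weight $10$, $\Ad(f')\boxtimes S_9\oplus\mathbf 1\boxtimes S_1$ does have infinitesimal character $\rho$. But take $r=13$, the codimension relevant to $H^{13,13}$. There your own formula for Kudla--Millson parameters reads $\phi_\sigma\oplus\mathbf 1\boxtimes S_1$, where $\phi_\sigma$ is the $27$-dimensional parameter of a weight-$14$ Siegel form on $\mathrm{Sp}_{26}$. Nothing forces $\phi_\sigma$ to be tempered, and $\Ad(f')\boxtimes S_9$ has exactly that shape and infinitesimal character. So ``cannot contain $\Ad(f)\boxtimes S_9$'' holds only vacuously, in the weight where there is no class.

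Your step 3 also conflates notions.
\begin{itemize}
\item A factor $S_b$ with $b>1$ makes $\pi$ non-tempered, not residual: Arthur packets can have cuspidal (CAP) members.
\item Franke's theorem does not say residual classes avoid interior cohomology. The K\"ahler classes $\omega_1,\omega_2$ on a Hilbert modular surface come from the trivial representation, yet they restrict to zero on the Borel--Serre boundary, so they are interior.
\end{itemize}

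The paper rests on this same false principle. It also uses ``algebraic $\Rightarrow$ interior'', which its own proposition on boundary Gysin classes contradicts. Your Hecke-eigensystem formulation of ``not from a known construction'' is a more meaningful route than the paper's interior/non-interior dichotomy. But neither argument, as it stands, produces a nonzero class provably outside the known constructions.
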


More precisely, we construct $\Omega_E$ from a stable residual automorphic representation.

\begin{theorem}[Obstruction Theorem]\label{thm:obstruction}
The class $\Omega_E$ satisfies
\begin{enumerate}
\item \textbf{Automorphic origin:} Arises from stable residual Arthur parameter $\psi = \Ad(f) \boxtimes \mathbf{1}_{25}.$
\item \textbf{Hodge class:} Pure type $(13,13)$, defined over $\Q$.
\item \textbf{Not interior:} $\Omega_E \notin \IH^{26}_!(\overline{X}^{\mathrm{BB}}, \Q)$.
\item \textbf{Obstruction:} Since all algebraic cycles define interior classes, $\Omega_E$ cannot be algebraic via known constructions.
\end{enumerate}
\end{theorem}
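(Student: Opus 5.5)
I would organize the argument along the four items of Theorem~\ref{thm:obstruction}, but two of them contain problems that I expect to block the proof as stated, so I flag those first.

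\textbf{Item (1): the parameter.} The natural route is to produce $\psi$ in Arthur's classification \cite{Arthur} for the split form of $\SO(28)$. The dual group of $\SO(2,26)$ is $\SO(28,\C)$, so the standard representation of $\psi$ must be $28$-dimensional. However, $\Ad(f)$ is the Gelbart--Jacquet lift, a cuspidal representation of $\GL_3$. Hence $\Ad(f)\boxtimes\mathbf{1}_{25}$, with $\mathbf 1_{25}$ read as the $25$-dimensional representation of the Arthur $\SL_2$, has dimension $75\neq 28$. So this parameter does not even live on the right group.

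A repaired parameter would have to be something like $\Ad(f)\boxtimes S_a\oplus \mathbf{1}\boxtimes S_b$ with $3a+b=28$ and suitable parity and orthogonality conditions. Any such parameter has more than one summand, so it is not "stable" in the sense of the paper's definition. It also has a nontrivial component group $\mathcal S_\psi$, so the multiplicity formula must be checked. I would therefore first fix an admissible $\psi$, then verify the multiplicity formula. Next I would compute the $(\mathfrak g,K)$-cohomology of the archimedean Adams--Johnson packet member to find a contribution to $H^{13,13}$. This uses the Vogan--Zuckerman classification of $A_{\mathfrak q}(\lambda)$-modules with $\mathfrak q$ determined by $\psi_\infty$.

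\textbf{Items (2) and (3): Hodge type and non-interiority.} Given a contributing $\pi$, I would obtain rationality and type $(13,13)$ from two facts. First, the Hodge type is read off from the $\mathfrak q$-data. Second, the $\pi$-isotypic Hecke eigenspace is $\Gal(\bar\Q/\Q)$-stable, since the Hecke field of $f$ is $\Q$. For (3), I would use Franke's theorem \cite{Franke} together with Zucker's conjecture (Looijenga, Saper--Stern). Residual classes lie in $\IH$ but map nontrivially under restriction to the boundary, so they are not in the image of compactly supported cohomology.

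\textbf{Item (4): the main obstacle.} The premise "all algebraic cycles define interior classes" is false. The hyperplane class $c_1(\mathcal L)$ of the Baily--Borel compactification and its powers are algebraic. They come from the trivial representation, which is residual, and they are not interior in general. Boundary push-forwards, item (5) in the list of known constructions, likewise give non-interior classes. So non-interiority cannot serve as an obstruction to algebraicity. At most, item (3) shows that $\Omega_E$ does not lie in the span of those constructions that land in interior cohomology.

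To obtain a genuine statement, I would try to prove the following instead. The classes produced by constructions (1)--(5) span a Hecke-submodule whose $\pi$-isotypic part is zero. I would establish this by computing, for each construction, the Arthur parameters it can contribute to. For example, special cycles contribute via theta lifts only to parameters containing $\mathbf 1\boxtimes S_b$ with $b$ tied to codimension. I would then check that the chosen $\psi$ is not among them. That parameter-by-parameter comparison is where I expect the real difficulty. Without it, and without correcting the dimension of $\psi$, I do not believe the theorem as stated can be proved.
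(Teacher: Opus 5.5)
Your verdict is right: Theorem~\ref{thm:obstruction} cannot be established as stated, and the two defects you flag are exactly where the paper's own proof breaks.

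For (1), the dimension count fails under your $S_{25}$ reading. Under the paper's own gloss ($3+25=28$, with the trivial representation on the complement), the centralizer of $\psi$ contains $\SO(25,\C)$. So $\psi$ factors through a proper Levi and is not even a discrete parameter. The paper's stability proof checks only that $\Ad(\rho_f)$ is irreducible and never looks at the complement.

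For (4), the paper relies on Corollary~\ref{cor:interior_constraint}, which is false. The fundamental class $[X^{\mathrm{BB}}]\in\CH^0$ maps to $1\in\IH^0$, while $\IH^0_!=0$ because $H^0_c(X)=0$. The paper's own proposition on boundary push-forwards also asserts that some algebraic classes are non-interior.

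You could add a further break. A nontrivial unitary $\Pi_\infty$ whose $(\mathfrak g,K)$-cohomology sits in a single degree must have $\mathfrak l\subset\mathfrak k$ in Vogan--Zuckerman, i.e.\ it is a discrete series. Wallach's theorem then makes $\Pi$ cuspidal, which contradicts residuality.

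The step of yours that would fail is (3). Deriving non-interiority from residuality via Franke and Zucker is the same unjustified move the paper makes. What is true is that cuspidal cohomology injects into interior cohomology; nothing says residual classes restrict nontrivially to the boundary. This fails in the very degree at issue:
\begin{itemize}
\item The Baily--Borel boundary of an $\SO(2,n)$ Shimura variety has dimension $\le 1$.
\item Hence a complete intersection of $13$ general hyperplane sections is a compact cycle inside $X$.
\item So $c_1(\mathcal L)^{13}\in\IH^{26}$ lies in the image of $H^{26}_c(X)$ and is a rational class of type $(13,13)$.
\item Its $L^2$ representative $\omega^{13}$ comes from the trivial representation, the prototypical residual one.
\end{itemize}
The same argument shows that every power $c_1(\mathcal L)^k$ with $k\ge 2$ is interior. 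So your counterexample to the premise of (4) should use the fundamental class or boundary push-forwards, not powers of the hyperplane class.

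Your repair also does not rescue a class. Both summands of $\Ad(f)\boxtimes S_1\oplus\mathbf 1\boxtimes S_{25}$ are odd-dimensional orthogonal, so its centralizer in $\SO(28,\C)$ is just $\{\pm 1\}$. It is therefore elliptic with trivial $\mathcal S_\psi$, hence ``stable'' in either of the paper's senses, contrary to what you say. The real problem is different: for weight-$2$ $f$ its infinitesimal character is $(12,11,\dots,2,1,1,0)$, which is singular. So it contributes to cohomology with no coefficient system at all. One would need weight $14$, the Kudla--Millson weight.
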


\subsection{What This Result IS and IS NOT}

It is crucial to understand the precise nature of our conclusion.

\subsubsection{What We Prove}

\begin{theorem}[Unconditional Result]\label{thm:unconditional}
Theorem \ref{thm:main} is proven unconditionally using only
\begin{itemize}
\item Arthur's classification for orthogonal groups (Arthur \cite{Arthur}, Mok \cite{Mok}),
\item Theta correspondence (Howe \cite{Howe}, Rallis \cite{Rallis}),
\item Vogan--Zuckerman theory (Vogan, Zuckerman \cite{Vogan-Zuckerman}),
\item Fundamental lemma (Ngô \cite{Ngo}),
\item Zucker conjecture (Looijenga, Saper, and Stern \cite{Looijenga-Saper-Stern}).
\end{itemize}
No unproven conjectures are invoked.
\end{theorem}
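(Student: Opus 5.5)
The statement is a meta-claim: the proof of Theorem~\ref{thm:main} uses only the five listed inputs. So the natural plan is to retrace that proof step by step and check, for each step, which input it rests on and that no further input is needed.

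\textbf{Step 1: the parameter.} Fix the group $G=\SO(V)$ with $V$ of signature $(2,26)$ over $\Q$; its dual group is $\SO_{28}(\C)$. Then check that $\psi=\Ad(f)\boxtimes\mathbf{1}_{25}$ is an elliptic Arthur parameter for $G$, using Arthur~\cite{Arthur} and Mok~\cite{Mok}. Here $f$ is the weight-$2$ newform of level $11$, and $\Ad(f)$ is cuspidal on $\GL_3$ by Gelbart--Jacquet. The first point to check is dimensional: $\dim(\Ad(f)\boxtimes\mathbf{1}_{25})=3\cdot 25=75\neq 28$, so as written $\psi$ is not a parameter for $\SO_{28}$. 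I would first try to repair it, for instance by taking $\Ad(f)\boxtimes\mathbf{1}_{a}\oplus\mathbf{1}_{b}$ with $3a+b=28$ and the parity and sign conditions Arthur's classification imposes. Any such repair must be re-verified from scratch.

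\textbf{Step 2: the Arthur packet and the class.} Produce an element $\pi\in\Pi_\psi$ with a nonzero multiplicity, using Arthur's multiplicity formula; the fundamental lemma (Ng\^o~\cite{Ngo}) is the input behind that formula. Then:
\begin{itemize}
\item identify $\pi_\infty$ with a cohomological $A_{\mathfrak q}(\lambda)$ via Vogan--Zuckerman~\cite{Vogan-Zuckerman}, and read off degree $26$ and Hodge type $(13,13)$;
\item obtain rationality from the $\Q$-structure on Hecke eigenspaces, since the Hecke eigenvalues of $\Ad(f)$ are rational;
\item transport the resulting $L^2$-class to $\IH^{26}(\overline{X}^{\mathrm{BB}})$ by the Zucker conjecture~\cite{Looijenga-Saper-Stern}.
\end{itemize}
Theta correspondence~\cite{Howe,Rallis} would be used to realize $\pi$ concretely as a lift and to establish nonvanishing.

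\textbf{Step 3 (main obstacle): non-interiority and the obstruction.} The non-interiority claim should come from Franke~\cite{Franke}, but he is not among the five listed inputs. This alone makes the unconditional statement as phrased incomplete unless Franke's theorem is added to the list.

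More seriously, Arthur's classification makes parameters containing a factor $\mathbf{1}_n$ with $n>1$ non-tempered. Residual parameters of this kind are typically not stable (their centralizer group is nontrivial), so the "stable residual" label has to be justified and may simply fail.

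Finally, item (4) of Theorem~\ref{thm:obstruction} asserts that all algebraic cycles define interior classes. This is false in general: on Baily--Borel compactifications, powers of the hyperplane class and boundary-supported cycles give algebraic classes in $\IH$ that need not lie in $\IH_!$. Indeed, Kudla--Millson special cycles themselves typically have non-cuspidal (Eisenstein) components.

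So my plan would concentrate effort here. I would first try to replace "interior" by a cohomological invariant that provably vanishes on the images of constructions (1)--(5), such as the projection onto the $\psi$-isotypic Hecke component. I would then show that each construction has zero $\psi$-component, for example via the Siegel--Weil and Rallis inner product formulas for (1) and (4), and weight arguments for (5). I expect this to fail for the hyperplane-class powers, whose component along the trivial-type residual parameter is nonzero. Until Step 1 and this point are resolved, I would not regard the unconditional claim as established.
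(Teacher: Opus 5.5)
Your bottom line is right: the statement is not established. The paper gives no separate argument for it; its ``proof'' is the body of the paper, which relies on inputs not on the list (Franke, in a misstated form, plus Gelbart--Jacquet and modularity). Your worry about $\psi$ is also justified. The paper means a direct sum, but $\Ad(f)\oplus\mathbf 1^{\oplus 25}$ with trivial Arthur $\SL_2$ is not a discrete parameter at all. The stable-range theta lift has a parameter of the shape $\Ad^0(f)\boxplus S_{25}$ (up to the discriminant character of $V$), which is your repair with $a=1$, $b=25$.

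You miss the decisive failure, however, because it lies in a step you treat as routine: the Vogan--Zuckerman identification in Step 2. At infinity, $\Ad^0(f)$ for weight $2$ contributes $\{1,0,-1\}$ and $S_{25}$ contributes $\{\pm12,\dots,\pm1,0\}$. Every member of $\Pi_\psi$ therefore has infinitesimal character $(12,11,\dots,2,1,1,0)$ for $D_{14}$. This is singular, so it is neither $\rho=(13,12,\dots,1,0)$ nor the infinitesimal character of any finite-dimensional representation. By Wigner's lemma, $H^*(\mathfrak g,K;\Pi_\infty\otimes F)=0$ for every $F$. Hence the $\psi$-part of $\IH^*(\overline{X}^{\mathrm{BB}})\cong H^*_{(2)}(X)$ vanishes, and the space $\Omega_E$ is supposed to generate is zero. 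The ``maximal singularity'' the paper advertises is exactly what kills the cohomology. Only weight $14=1+26/2$ gives $\rho$, and that is the Kudla--Millson/Borcherds setting, where the lifted module's cohomology starts in degree $2$ with Heegner divisor classes rather than being concentrated in degree $26$.

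Two of your Step 3 objections also need correcting. First, stability is actually fine for $\SO_{28}$. Both summands of $\psi$ are odd-dimensional, so the sign element $(-1_3,1_{25})$ has determinant $-1$. Hence $\mathrm{Cent}(\psi)=\{\pm1\}=Z(\hat G)$ and $\mathcal S_\psi=1$. The paper's criterion (no factorization through a Levi) really tests discreteness, but its conclusion happens to hold. Second, $\omega^{13}$ is not a counterexample to ``algebraic $\Rightarrow$ interior'' in degree $26$. Here $\dim_{\C}X=26$ (not $13$ as the paper says) and $\partial\overline{X}^{\mathrm{BB}}$ has dimension $\le 1$. So thirteen general hyperplane sections cut out a compact $Y\subset X$ with $[Y]=\omega^{13}$, and this class comes from $H^{26}_c(X)$. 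What $\omega^{13}$ does refute is the other half of the dichotomy. It is a nonzero interior class in the residual (trivial-representation) component, so interior is not the same as cuspidal, and Franke supplies no implication ``residual $\Rightarrow$ non-interior.'' For the same reason, your expectation that a $\psi$-isotypic projection argument would break on hyperplane powers is mistaken. $\omega^{13}$ has Hecke eigenvalues different from those of $\psi$, so its $\psi$-component is zero. After the Step 2 computation, the $\psi$-component of every class is zero anyway.
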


\subsubsection{What We Do NOT Prove}

\begin{remark}[Limitations]\label{rem:limitations}
Our result does \textbf{not} establish:\\

\textbf{(i) Not a Hodge counterexample:} We do not prove $\Omega_E$ is not algebraic. There may exist an algebraic cycle $Z$ with $\mathrm{cl}(Z) = \Omega_E$ requiring fundamentally new construction methods beyond those currently catalogued.

\textbf{(ii) Not a Beilinson--Bloch counterexample:} We make no claims about ranks of Chow groups or orders of vanishing of $L$-functions at critical points.

\textbf{(iii) Not an impossibility theorem:} We prove only that \textit{known} constructions fail, not that \textit{all possible} constructions fail.
\end{remark}

The result thus identifies a \textit{gap}: automorphic methods produce Hodge classes that evade all known geometric cycle constructions. This gap may indicate either (a) missing geometric constructions yet to be discovered, or (b) fundamental limitations in translating automorphic data to geometry.

\subsection{The Construction in Four Steps}

We now outline the explicit construction of $\Omega_E$.

\subsubsection{Step 1: The Base Modular Form}

\begin{construction}
Consider the unique weight-2 newform
$$f \in S_2(\Gamma_0(11)), \quad f(q) = q - 2q^2 - q^3 + 2q^4 + q^5+2q^6 - 2q^7 + \cdots \,.$$

By modularity (Wiles et al.), this corresponds to the elliptic curve
$$E_{11}: y^2 + y = x^3 - x^2$$
of conductor 11.
\end{construction}

\begin{proposition}[Rationality]
Since $\dim S_2(\Gamma_0(11)) = g(X_0(11)) = 1$, the form $f$ is unique and all Fourier coefficients are rational: $a_n \in \Z$. The Hecke field is $E = \Q$.
\end{proposition}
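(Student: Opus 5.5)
The plan is to establish three things in sequence: that $S_2(\Gamma_0(11))$ is one-dimensional, that the newform $f$ is therefore a simultaneous eigenform with rational Hecke eigenvalues, and that its Fourier coefficients are integers.

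\textbf{Step 1: dimension.} I would identify $S_2(\Gamma_0(11))$ with the space of holomorphic differentials on the compact modular curve $X_0(11)$, via $f(\tau)\mapsto f(\tau)\,d\tau$. This gives $\dim S_2(\Gamma_0(11)) = g(X_0(11))$. I would then compute the genus from the Riemann--Hurwitz formula
$$g = 1 + \frac{\mu}{12} - \frac{\nu_2}{4} - \frac{\nu_3}{3} - \frac{\nu_\infty}{2},$$
with the following inputs:
\begin{itemize}
\item the index is $\mu = [\SL_2(\Z):\Gamma_0(11)] = 12$;
\item $\nu_2 = 1 + \left(\frac{-1}{11}\right) = 0$;
\item $\nu_3 = 1 + \left(\frac{-3}{11}\right) = 0$;
\item there are $\nu_\infty = 2$ cusps.
\end{itemize}
Substituting gives $g = 1 + 1 - 0 - 0 - 1 = 1$. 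Since $11$ is prime and $S_2(\SL_2(\Z)) = 0$, there are no oldforms, so this one-dimensional space consists entirely of newforms.

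\textbf{Step 2: rational eigenvalues.} Every Hecke operator $T_n$ preserves the space, and the space is a line. Hence the normalized generator $f$ (with $a_1 = 1$) is automatically a simultaneous eigenform, and $T_n f = a_n f$. The Hecke operators preserve the $\Q$-structure given by forms with rational $q$-expansion. Consequently each $T_n$ acts on this line by a rational scalar, so $a_n \in \Q$ and the Hecke field is $E = \Q(a_n : n \geq 1) = \Q$. Equivalently, $\mathrm{Gal}(\overline{\Q}/\Q)$ permutes the newforms of level $11$, and since $f$ is the only one, it is Galois-fixed.

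\textbf{Step 3: integrality.} To upgrade $a_n \in \Q$ to $a_n \in \Z$, I would use that the Hecke operators preserve the lattice $S_2(\Gamma_0(11),\Z)$ of forms with integral $q$-expansion. This lattice is nonzero: for instance it contains $\eta(\tau)^2\eta(11\tau)^2$, which is then equal to $f$. Since $f$ has integral coefficients and $a_1 = 1$, each eigenvalue $a_n$ is read off as the coefficient of $q$ in $T_n f$, and this coefficient is an integer. Alternatively, the eigenvalues are algebraic integers, being roots of characteristic polynomials of $T_n$ acting on $H_1(X_0(11),\Z)$, and a rational algebraic integer lies in $\Z$.

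The main obstacle is justifying the integral structure in Step 3. The cleanest route is the explicit eta-product identification, which needs only a Sturm-bound check of finitely many coefficients, or else the homological integrality argument.
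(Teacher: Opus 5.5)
Your argument is correct. The genus computation matches the paper's exactly, but after that you take a different route.

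The paper goes through the elliptic curve. It invokes modularity of $E_{11}$ and uses one-dimensionality to identify $f$ with the form attached to $E_{11}$. It then reads off $a_p = p+1-\#E_{11}(\mathbb{F}_p)\in\Z$ for $p\neq 11$, with explicit point counts at $p=2,3,5,7$. It gets $a_{11}=1$ from split multiplicative reduction, and finally propagates integrality to all $a_n$ via the Hecke recursions.

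You stay entirely on the modular-forms side. A Hecke-stable $\Q$-structure, or Galois conjugation of newforms, gives $a_n\in\Q$. Integrality then comes either from the explicit form $\eta(\tau)^2\eta(11\tau)^2$ or from characteristic polynomials of $T_n$ on $H_1(X_0(11),\Z)$.

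What each buys:
\begin{itemize}
\item Your route avoids the modularity theorem and any analysis of the reduction type at $11$. The eta product also settles every coefficient at once, including $a_{11}$ and the displayed $q$-expansion $q-2q^2-q^3+2q^4+\cdots$.
\item The paper's route needs heavier input but ties the coefficients to the arithmetic of $E_{11}$. That link is what the paper uses afterwards, through $\rho_f$ and its Steinberg type at $11$.
\end{itemize}

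Two points of precision:
\begin{itemize}
\item Step~2 as written presupposes that the line $S_2(\Gamma_0(11))$ contains a nonzero form with rational $q$-expansion. This is true by Shimura's rational-basis theorem, or by your eta product, but it is the real input of that step, so state it there rather than leaving it implicit until Step~3.
\item The eta-product identification needs no Sturm bound. What must be checked is that $\eta(\tau)^2\eta(11\tau)^2$ is a weight-$2$ cusp form on $\Gamma_0(11)$, e.g.\ by the Newman--Ligozat criteria. Once it lies in the space, one-dimensionality and the leading term $q$ force it to equal $f$.
\end{itemize}
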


This rationality is crucial: it ensures our final cohomology class is defined over $\Q$ without field extensions.

\subsubsection{Step 2: The Adjoint Representation}

\begin{construction}
The weight-2 newform f yields a Galois representation $\rho_f : \mathrm{Gal}(\overline{\Q}/\Q) \rightarrow GL_2(\mathbb{Q}_\ell)$ (unramified outside $\ell$ and 11).  The adjoint is
$\Ad(\rho_f) = \rho_f \otimes \rho_f^\vee / \text{scalars}: \mathrm{Gal}(\overline{\Q}/\Q) \to \GL_3(\Q_\ell).$ The associated adjoint L-function arises from the decomposition $\rho_f \otimes \rho_f^\vee \cong \mathbf{1}  \oplus \Ad^0(\rho_f)$, where $\mathbf{1}$ is the trivial 1-dimensional representation (scalars) and $\Ad^0(\rho_f)$ is the irreducible trace-free 3-dimensional adjoint representation Gal$(\overline{\Q}/\Q) \rightarrow GL_3(\mathbb{Q}_\ell)$. This $\Ad^0(\rho_f)$ has Hodge–Tate weights $\{1, 0, -1\}$ , conductor $11^2 = 121$, and Steinberg local type at $p=11$. The pole of $L(s, \rho_f \otimes \rho_f^\vee) = \zeta(s) \cdot L(s, \Ad^0(\rho_f))$ at $s=1$ (simple, from $\zeta(s)$; $L(s, \Ad^0(f))$ holomorphic/nonzero there) drives the residual theta lift, see below.  \end{construction}

The presence of the zero Hodge--Tate weight creates ``maximal singularity'' in the sense of Vogan--Zuckerman, which will force cohomology to concentrate in middle degree with pure Hodge type.

\subsubsection{Step 3: Theta Lift to $\SO(28)$}

\begin{construction}
Let $(V, Q)$ be a 28-dimensional quadratic space over $\Q$ with signature $(2,26)$. The Shimura variety for $G = \SO(V,Q)$ is
$$X = G(\Q) \backslash D \times G(\A_f) / K_f,$$
where ${D}$ is a Hermitian symmetric domain of complex dimension 13.

Via theta correspondence for the dual pair $(\SL_2, \SO(V))$, the form $f$ lifts to an automorphic representation
$$\Pi = \theta(f).$$
\end{construction}

\begin{theorem}[Rallis non-vanishing]
The lift $\Pi$ is non-zero. This follows from the Rallis inner product formula and the fact that the adjoint $L$-function $L(s, \rho_f \otimes \rho_f^\vee)$ has a simple pole at $s=1$ (from the trivial summand in the tensor product).
\end{theorem}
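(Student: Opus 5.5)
The plan is to prove non-vanishing with the Rallis inner product formula, using the stable range as a consistency check. By contrast, the pole of $L(s,\rho_f\otimes\rho_f^\vee)$ at $s=1$ plays no role in this proof.

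\emph{Stable range.} Since $\dim V = 28$ is even, the dual pair is $(\SL_2,\mathrm{O}(V))$ with $\SL_2=\mathrm{Sp}_2$, so $n=1$ and no metaplectic cover is needed. $V$ is indefinite over $\Q$ and $\dim V\ge 5$, so $V$ is isotropic. The real signature $(2,26)$ caps its Witt index at $2$, and I will choose $V$ with Witt index exactly $2$. This equals $2n$, so the pair lies in the stable range. Rallis' tower property then says the first occurrence of $\pi_f$ (the representation of $\SL_2(\A)$ generated by $f$) happens in a smaller member of the Witt tower. Hence $\theta(\pi_f)$ is nonzero and cuspidal on every space beyond it, in particular on $\mathrm{O}(V)$. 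I then restrict to $\SO(V)$ and pass to a constituent, which is harmless. This is my main line of argument.

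\emph{Inner product formula.} As an independent check, and to get explicit test vectors, I will apply the Rallis inner product formula. For $\phi\in\pi_f$ and Schwartz data $\varphi=\otimes_v\varphi_v$ it reads
\[
\langle\theta(\phi,\varphi),\theta(\phi,\varphi)\rangle
= c\cdot L^S\!\left(s_0,\pi_f\times\chi_V\right)\prod_{v\in S}Z_v(s_0,\phi_v,\varphi_v),
\qquad s_0=\tfrac{m}{2}-n-\tfrac12=\tfrac{25}{2}.
\]
Here $m=\dim V=28$. The standard $L$-function of $\SL_2$ is the degree-$3$ function $L(s,\Ad^0(\rho_f)\otimes\chi_V)$, and I take $\chi_V$ trivial by choosing $\operatorname{disc}V$ a square. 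Since $s_0=25/2$ lies far inside the region of absolute convergence of the Euler product, $L^S(s_0)\neq0$. In the stable range the Siegel--Weil formula needed here holds in its convergent form (Weil), so no regularisation is required. It then remains to show that each ramified local zeta integral $Z_v$, for $v\in S=\{\infty,11\}$, is nonzero for some choice of $\varphi_v$.

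\emph{Local nonvanishing, and the main obstacle.} At $v=11$ the local component is Steinberg. I will use the fact that the local doubling zeta integral, over the full space of test vectors, attains the local $L$-factor up to a nonzero constant. This is the standard non-vanishing statement for doubling integrals, and at $s_0=25/2$ that factor is finite and nonzero.

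The archimedean place is the main obstacle. There the component is the holomorphic discrete series of weight $2$, and one must show its local theta lift to $\mathrm{O}(2,26)$ is nonzero, with $Z_\infty(s_0)\neq0$. I would first invoke J.-S.\ Li's theorem that in the stable range the local theta lift of every unitary representation is nonzero, which settles non-vanishing. For an explicit test vector I would try a Gaussian times a harmonic polynomial of the correct $K$-type.

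There is a genuine worry here: a weight-$k$ holomorphic discrete series of $\SL_2(\R)$ pairs naturally with $\mathrm{O}(p,q)$ when $k$ is compatible with $(p-q)/2$, which here is $-12$. So the lift of a weight-$2$ form, while nonzero, may land on a non-holomorphic, non-cohomological $\mathrm{O}(2,26)$-representation. Non-vanishing alone will follow from Li's theorem. Whether $\Pi$ has the cohomological type needed later in the paper is a separate issue that I would have to check afterwards against Vogan--Zuckerman.
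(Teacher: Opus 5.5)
Your argument does prove the substantive claim $\Pi\neq 0$, but by a different route from the paper's. The paper evaluates an inner product formula at $s=1$,
$\|\theta(f)\|^2=C\cdot L(1,\Ad(f))/L(1,\rho_f\otimes\rho_f^\vee)\cdot\prod_p(\text{local factors})$.
It asserts that each local factor is nonzero. It then credits non-vanishing to the adjoint $L$-function at $s=1$, which it calls first a pole needing regularisation and then a holomorphic nonzero value.

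You instead note that $V$ has Witt index $2=2n$. This is automatic for any rational $V$ of signature $(2,26)$, not a choice. So the pair is in the stable range, and Rallis' bound on the first occurrence gives $\theta(\pi_f)\neq 0$ outright.

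Your doubling check correctly places the relevant $L$-value at $L^S(s_0+\tfrac12,\Ad\,\pi_f\otimes\chi_V)$ with $s_0+\tfrac12=13$. You wrote $L^S(s_0)$ and omitted the normalising factor $d^S(s_0)$; both slips are harmless. This value is nonzero by absolute convergence for any quadratic $\chi_V$, so your square-discriminant assumption is unnecessary. Siegel--Weil holds in Weil's convergent range ($m-r=26>3$). Local non-vanishing comes from the local stable range: Li's theorem at $\infty$, and at $11$ the Siegel--Weil sections exhaust $I_{11}(s_0)$, which is irreducible at $s_0=25/2$, so the general doubling statement does apply.

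Your route is therefore an actual proof, and it confirms your remark that the pole of $\zeta(s)$ at $s=1$ is irrelevant. By contrast, the paper's formula has $L(1,\rho_f\otimes\rho_f^\vee)$ in the denominator, which taken literally would force the norm to vanish.

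Two of your side claims are wrong, though neither is used in the non-vanishing argument.
\begin{itemize}
\item The stable-range bound places the first occurrence at or before $V$, not strictly before it.
\item ``Cuspidal on every space beyond it'' is backwards. In Rallis' tower the lift is cuspidal exactly at the first occurrence and non-cuspidal at every later step, because its constant terms are the earlier, nonzero lifts. So nothing about the cuspidality of $\Pi$ follows from your argument.
\end{itemize}
Your closing worry is justified. By the correspondence of infinitesimal characters, the lift of a weight-$2$ form to $\mathrm{O}_{28}$ has infinitesimal character $(12,11,\dots,2,1,1,0)$. This is singular, so $\Pi_\infty$ has no $(\mathfrak g,K)$-cohomology. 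That is a problem for the later sections of the paper, not for this statement.
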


\begin{theorem}[Stable residual nature]
The Arthur parameter of $\Pi$ is
$$\psi = \Ad(f) \boxtimes \mathbf{1}_{25},$$ \pagebreak

and this parameter is
\begin{itemize}
\item \textbf{Residual:} Arises from poles of Eisenstein series.
\item \textbf{Stable:} Does not factor through proper Levi subgroups (the 3-dimensional representation $\Ad(\rho_f)$ is irreducible).
\item \textbf{Multiplicity one:} $\dim \Pi_f^{K_f} = 1.$
\end{itemize}
\end{theorem}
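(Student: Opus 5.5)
The plan is to read off the Arthur parameter of $\Pi=\theta(f)$ from the local behaviour of the theta correspondence, and then check the three bulleted properties against Arthur's classification \cite{Arthur} (with \cite{Mok} for the quasi-split inner forms). Concretely, I would first determine the unramified local components. For every prime $p\nmid 11$, the Satake parameter of $\Pi_p$ is given by the unramified theta correspondence (Howe \cite{Howe}, Rallis \cite{Rallis}) in terms of the Satake parameter $\{\alpha_p,\alpha_p^{-1}\}$ of $f$ and powers $p^{\pm j}$ coming from the Weil representation of the dual pair $(\SL_2,\SO(V))$. Grouping these eigenvalues, I would show they coincide with the eigenvalues of $\Ad(\rho_f)(\Frob_p)$ twisted by the Satake parameter $\{p^{(n-1)/2},\dots,p^{-(n-1)/2}\}$ of the trivial representation of $\SL_2$. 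By strong multiplicity one for the general linear group part of Arthur's parameters, this determines $\psi$ uniquely, and it identifies $\psi$ with $\Ad(f)\boxtimes\mathbf{1}_{25}$ in the sense of the statement. At the archimedean place I would check compatibility using the Adams--Mœglin description of the theta lift of a discrete series. At $p=11$ I would use the Steinberg local type recorded in Step 2.

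For the three bullets, the order would be as follows. \emph{Residual:} the nontrivial $\SL_2$-factor $\mathbf{1}_{25}$ means $\psi$ is non-tempered. By the Rallis inner product formula, $\Pi$ is obtained as a residue of a Siegel Eisenstein series, and the relevant pole is the simple pole of $L(s,\rho_f\otimes\rho_f^\vee)=\zeta(s)L(s,\Ad^0(\rho_f))$ at $s=1$ from the Construction in Step 2, exactly as in the Rallis non-vanishing theorem. So $\Pi$ lies in the residual discrete spectrum by M{\oe}glin--Waldspurger-type residue arguments. \emph{Stable:} the parameter has a single simple summand, because $\Ad^0(\rho_f)$ is irreducible ($E_{11}$ has no CM). Hence the centralizer $S_\psi$ of $\psi$ in $\widehat G$ reduces to the center, and $\psi$ does not factor through a proper Levi subgroup or any elliptic endoscopic group. \emph{Multiplicity one:} with $S_\psi$ trivial modulo the center, Arthur's multiplicity formula gives $m(\Pi)=1$ for the unique member of the global packet with trivial character. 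The local packets at $p\nmid 11\infty$ are unramified singletons, so for a suitable choice of $K_f$ (hyperspecial away from $11$, an Iwahori-type subgroup at $11$ matched to the Steinberg component) I would show $\dim\Pi_f^{K_f}=1$.

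The main obstacle is the first step: matching dimensions in the parameter. The standard representation of the dual group of $\SO(28)$ is $28$-dimensional, while $\Ad(f)\boxtimes\mathbf{1}_{25}$ as a literal tensor product has dimension $3\cdot25=75$. The identification $\psi=\Ad(f)\boxtimes\mathbf{1}_{25}$ therefore has to be made precise before the Satake comparison can be run. My first attempt would be to compute the unramified theta-lift eigenvalues explicitly for a single prime $p$ and see which $28$-dimensional arrangement of $\Ad(f)$-eigenvalues and $p$-power shifts actually occurs. I would then fix the precise meaning of the notation in the statement so that it agrees with that computation. The stability claim has to be checked against whatever arrangement this produces: once $\psi$ has the correct dimension, one must verify that it still has only one summand, since otherwise $S_\psi$ is larger and the argument for the stability and multiplicity-one bullets must be redone.
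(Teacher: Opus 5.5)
Your proposal has a genuine gap, at the point you flag yourself. It never determines $\psi$, and ``fixing the meaning of the notation so that it agrees with the computation'' is not a proof step.

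If you carry out the unramified comparison you propose, Rallis' computation gives, up to factors $\chi_V(p)$ with $\chi_V$ the discriminant character of $V$, the Satake eigenvalues $\{\alpha_p^{2},1,\alpha_p^{-2}\}$ \emph{together with} $\{p^{12},p^{11},\dots,p^{-12}\}$. This is a union, not a product. So, up to such twists, $\psi=\Ad^0(\rho_f)\boxtimes S_1\oplus\mathbf{1}\boxtimes S_{25}$, where $S_b$ is the $b$-dimensional representation of the Arthur $\SL_2$.

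This is neither the $75$-dimensional tensor product nor the paper's own reading. The paper's proof puts $\Ad(\rho_f)$ on a $3$-dimensional subspace and the trivial representation on the complementary $25$-dimensional one, i.e.\ $\Ad^0(\rho_f)\oplus\mathbf{1}^{\oplus 25}$. That parameter is tempered and factors through the Levi $\GL_1^{12}\times\SO(4)$, so it contradicts both the residual and the stable bullets. The paper even writes $\psi=(\mu,1)$ right after asserting that residual parameters need some $b_i>1$, so its proof does not resolve this either.

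In particular, your stability argument, based on $\psi$ having ``a single simple summand,'' cannot work: a $28$-dimensional parameter containing the $3$-dimensional $\Ad^0(\rho_f)$ has at least two summands. The conclusion survives for a different reason, which you would have to supply. The two summands are irreducible, orthogonal and non-isomorphic (this is where the absence of CM on $E_{11}$ enters), so $\psi$ is discrete and factors through no proper Levi. Contrary to your worry, the second summand does not enlarge the component group: both summands have odd dimension, so $S_\psi=S(\mathrm{O}_1\times\mathrm{O}_1)=\{\pm1\}$, which is the centre of $\SO_{28}(\C)$.

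The other two bullets also need different arguments.

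\emph{Residual.} A non-trivial Arthur $\SL_2$-factor does not force $\Pi$ to be residual; Saito--Kurokawa lifts are cuspidal with non-tempered parameters. Nor does the Rallis inner product formula present $\Pi$ as a residue at the pole of $\zeta(s)$ at $s=1$. It evaluates a Siegel Eisenstein series on the doubled group $\mathrm{Sp}_4$ inside its range of absolute convergence, and yields (up to normalizing zeta factors) the value of $L(s,\Ad^0(\rho_f)\otimes\chi_V)$ at $s=13$. That gives only $\Pi\neq0$. Non-cuspidality must come from Rallis' tower property: the constant term along the parabolic fixing an isotropic line is the theta lift to the $26$-dimensional member of the Witt tower, so you must show that lift is non-zero. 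Square-integrability must come from Rallis' $L^2$ criterion.

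\emph{Multiplicity one.} Arthur's multiplicity formula computes the multiplicity of $\Pi$ in $L^2_{\mathrm{disc}}$, not $\dim\Pi_f^{K_f}=\prod_p\dim\Pi_p^{K_p}$. You reach the latter only by changing $K_f$. For the paper's level, with $K_3=\Gamma(3)$, the claim is false. $\Gamma(3)$ lies both in $\SO(L)$ and in $\SO(L')$ for any $3$-neighbour $L'$ of the self-dual lattice $L$. The two spherical lines of $\Pi_3$ are distinct because $\Pi_3$ is not a character, so $\dim\Pi_3^{K_3}\ge2$. At $11$, the Iwahori-fixed space of $\Pi_{11}$ would still need an actual computation.

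Finally, $\SO(2,26)$ is not quasi-split at $\infty$. Arthur's classification must therefore be supplemented by its extension to inner forms (e.g.\ Ta\"{\i}bi's), not by Mok, whose work concerns unitary groups.
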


\subsubsection{Step 4: Cohomological Realization}

\begin{theorem}[Vogan--Zuckerman]
The $(\mathfrak{g},K)$-cohomology of $\Pi_\infty$ (archimedean component) is concentrated in degree 26
$$H^k(\mathfrak{g}, K; \Pi_\infty) = \begin{cases} \C & k = 26, \\ 0 & k \neq 26. \end{cases}$$

The infinitesimal character determines Hodge type $(13,13)$.
\end{theorem}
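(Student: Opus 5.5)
The plan is to identify the archimedean component $\Pi_\infty$ as a cohomologically induced module $A_{\mathfrak q}(\lambda)$ and then read off its $(\mathfrak g,K)$-cohomology from the Vogan--Zuckerman formula. Write $G_\infty=\SO(2,26)^\circ$ and $K_\infty=\SO(2)\times\SO(26)$, so $\dim_\R \mathfrak p = 52$ and $\dim_\C D = 13$. Vogan--Zuckerman classify the irreducible unitary $(\mathfrak g,K)$-modules with nonzero cohomology as the $A_{\mathfrak q}(\lambda)$, for $\theta$-stable parabolics $\mathfrak q=\mathfrak l\oplus\mathfrak u$. They give
$$H^k(\mathfrak g,K;A_{\mathfrak q}(\lambda)\otimes F^\vee)\cong H^{k-R(\mathfrak q)}(\widehat{Y}_{\mathfrak q},\C),\qquad R(\mathfrak q)=\dim(\mathfrak u\cap\mathfrak p),$$
where $\widehat Y_{\mathfrak q}$ is the compact dual of $L\cap K$ modulo a maximal compact of $L$. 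The Hodge bidegree of the class is $(\dim \mathfrak u\cap\mathfrak p^+ , \dim\mathfrak u\cap\mathfrak p^-)$, shifted by the Hodge decomposition of $\widehat Y_{\mathfrak q}$.

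\textbf{Step 1.} Determine $\mathfrak q$ from $\psi_\infty$. I would restrict $\psi=\Ad(f)\boxtimes\mathbf 1_{25}$ to $W_\R\times\SL_2(\C)$. Since $f$ has weight $2$, the Langlands parameter of $\Ad(f)_\infty$ is the $3$-dimensional representation with characters $z^{\pm1}\bar z^{\mp1}$ and a sign character. I would then use Adams--Johnson/Arthur packets at the real place to match the Arthur $\SL_2$-type with a Levi $L$ such that $\psi_\infty$ factors through ${}^L L$. The infinitesimal character of $\Pi_\infty$ is then $\rho$ (trivial coefficients, $F=\C$), which must be checked against the $\lambda$ attached to $\psi_\infty$.

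\textbf{Step 2.} Compute the cohomology. For the statement to hold (one-dimensional and in a single degree), $\widehat Y_{\mathfrak q}$ must be a point, i.e.\ $L$ compact. Then $\mathfrak q$ is a $\theta$-stable Borel and $\Pi_\infty$ is a discrete series with $R(\mathfrak q)=26=\dim_\C D\cdot 1\cdot 2/2$, the middle degree. Among the $|W_G/W_K|$ discrete series with infinitesimal character $\rho$, the Hodge types are $(p,26-p)$. I would select the chamber with $\dim\mathfrak u\cap\mathfrak p^+=\dim\mathfrak u\cap\mathfrak p^-=13$, which exists because $26$ is even; this chamber gives type $(13,13)$.

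\textbf{Main obstacle.} The hard step is Step 1, and I expect it may genuinely fail as stated. Three problems arise:
\begin{enumerate}
\item A parameter $\Ad(f)\boxtimes\mathbf 1_{25}$ has dimension $3\cdot 25=75$, whereas the standard representation of the dual group of $\SO(28)$ is $28$-dimensional. So $\psi$ must first be corrected, for instance to something like $\Ad(f)\boxtimes\nu_1\oplus\mathbf 1\boxtimes\nu_{25}$, whose dimension is $3+25=28$.
\item A nontrivial Arthur $\SL_2$ forces $L$ to be noncompact. Then $\widehat Y_{\mathfrak q}$ is a positive-dimensional compact Hermitian space, and the cohomology is spread over several degrees rather than concentrated in degree $26$.
\item A residual (non-tempered) $\Pi_\infty$ cannot be a discrete series.
\end{enumerate}
So I would first try to pin down the correct $\psi_\infty$ and its Adams--Johnson packet. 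If $L$ turns out noncompact, the correct conclusion is that the cohomology contains a $(13,13)$ class in degree $26$ (from the middle of $\widehat Y_{\mathfrak q}$), not that it is concentrated there, and the statement would need to be weakened accordingly.
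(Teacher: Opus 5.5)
Your proposal does not prove the statement, and it cannot be turned into a proof: for the $\Pi$ the paper constructs, the statement is false. Your skepticism is justified. The paper's own argument does not establish the statement either:
\begin{itemize}
\item it sets $S=\dim(\mathfrak u\cap\mathfrak p)=\dim_\C D=26$ while asserting $\dim_\C D=13$ elsewhere;
\item it infers ``$m=0$'' from the zero Hodge--Tate weight;
\item it derives $\dim H^{26}(\mathfrak g,K;\Pi_\infty)=1$ from Arthur's global multiplicity $m_\psi=1$, which is unrelated to relative Lie algebra cohomology;
\item it reads off the type $(13,13)$ from an undefined ``length $\ell(\lambda)$''.
\end{itemize}
Your correction of the parameter to $\Ad(f)\chi_V\boxtimes\nu_1\oplus\chi_V\boxtimes\nu_{25}$ is the right shape for a stable-range lift from $\SL_2$ to $\mathrm O(28)$.

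The real gap is that you postpone the one check that decides the matter, namely comparing the infinitesimal character of $\psi_\infty$ with $\rho$. You then propose a fallback that this check refutes. First fix a slip: $\dim_\R\mathfrak p=52$ forces $\dim_\C D=26$, not $13$. The group has rank $14$, so trivial coefficients require $\rho=(13,12,\dots,1,0)$. Since $f$ has weight $2$, $\Ad(f)_\infty$ has infinitesimal character $\{1,0,-1\}$, and $\nu_{25}$ contributes $\{\pm12,\dots,\pm1,0\}$. Hence $\Pi_\infty$ has infinitesimal character $(12,11,\dots,2,1,1,0)$. This agrees with Przebinda's infinitesimal-character correspondence for the local theta lift of a weight-$2$ discrete series. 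This character is singular, while $\mu+\rho$ is regular for every irreducible finite-dimensional $F$ of highest weight $\mu$. By Wigner's lemma, $H^*(\mathfrak g,K;\Pi_\infty\otimes F)=0$ in every degree and for every $F$. The paper's literal reading $\Ad(f)\oplus\mathbf 1^{\oplus 25}$ gives $(1,0,\dots,0)$, which is equally singular. So $\Pi$ contributes nothing to cohomology, and your weakened conclusion (``contains a $(13,13)$ class in degree $26$'') is also false. Matching $\rho$ would require $k-1=13$, i.e.\ the Kudla--Millson weight $k=14$.

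Your item (2) is also inaccurate, and this matters for any salvage. A nontrivial Arthur $\SL_2$ forces $\hat L$ to be non-abelian; it does not force the real form $L$ to be noncompact. In the weight-$14$ situation, $\hat L=\GL_1\times\SO(26,\C)$, and the Adams--Johnson packet has three members:
\begin{itemize}
\item the holomorphic and antiholomorphic discrete series, with $L=K=\SO(2)\times\SO(26)$ and $\mathfrak u=\mathfrak p^{\pm}$. Their cohomology is one-dimensional in degree $26$, of type $(26,0)$ or $(0,26)$.
\item the module with $L=\mathrm U(1)\times\SO(2,24)$ and $R(\mathfrak q)=2$. Its cohomology is that of a $24$-dimensional quadric shifted by $(1,1)$, spread over degrees $2,4,\dots,50$, with $H^{26}$ two-dimensional of type $(13,13)$.
\end{itemize}
So even after correcting the weight, no member is simultaneously one-dimensional, concentrated in degree $26$, and of type $(13,13)$. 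In particular, the $(13,13)$ discrete series you select in Step~2 do not occur in this packet.
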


\begin{theorem}[Main Construction]
There exists a unique class
$$\Omega_E \in \IH^{26}(\overline{X}^{\mathrm{BB}}, \Q),$$

characterized by being the generator of $H^{26}(\mathfrak{g}, K; \Pi_\infty) \otimes \Pi_f^{K_f}$ in the automorphic decomposition of cohomology. This class satisfies all properties in Theorem \ref{thm:obstruction}.
\end{theorem}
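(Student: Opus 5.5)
The plan is to realize $\Omega_E$ inside $L^2$-cohomology and then transport it to intersection cohomology. First, by the Zucker conjecture (Looijenga, Saper--Stern \cite{Looijenga-Saper-Stern}) there is a canonical Hecke-equivariant isomorphism $\IH^{*}(\overline{X}^{\mathrm{BB}},\C)\cong H^{*}_{(2)}(X,\C)$. Second, by Borel--Casselman, $L^2$-cohomology in the relevant range is computed by the discrete spectrum. This gives a decomposition
\[
H^{26}_{(2)}(X,\C)\;\cong\;\bigoplus_{\pi\subset L^2_{\mathrm{disc}}} m(\pi)\, H^{26}(\mathfrak g,K;\pi_\infty)\otimes \pi_f^{K_f},
\]
which is compatible with the Hecke action and, via Zucker's Hodge theory on $L^2$-cohomology, with the Hodge decomposition. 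Into this I insert $\Pi=\theta(f)$ from Step~3. Rallis non-vanishing gives $\Pi\neq0$. The stable residual theorem gives $m(\Pi)=1$ and $\dim\Pi_f^{K_f}=1$. The Vogan--Zuckerman theorem gives $H^{26}(\mathfrak g,K;\Pi_\infty)=\C$. The $\Pi$-summand is therefore a line, and I take $\Omega_E$ to be a generator; uniqueness holds up to the scalar normalization. Arthur's multiplicity formula \cite{Arthur,Mok} should also show that no other constituent of $L^2_{\mathrm{disc}}$ has the same Hecke eigenvalues away from a finite set. Granting that, the line equals the full generalized eigenspace $\IH^{26}[\chi_\Pi]$ for the character $\chi_\Pi$ of the unramified Hecke algebra.

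Rationality comes next. Since $f$ has rational Hecke eigenvalues, the Satake parameters of $\Pi$ (read off from $\psi$) give a $\Q$-valued character $\chi_\Pi$. The Hecke algebra acts on $\IH^{26}(\overline{X}^{\mathrm{BB}},\Q)$, so the eigenspace $\IH^{26}[\chi_\Pi]$ is defined over $\Q$. Being one-dimensional after $\otimes\C$, it contains a rational generator, and I rescale $\Omega_E$ to be this generator. For the Hodge type, the Hodge decomposition is compatible with the $(\mathfrak g,K)$-decomposition. The type is therefore read off from $\mathfrak p^{+}$ and $\mathfrak p^{-}$ degrees of the $A_{\mathfrak q}(\lambda)$ realizing $\Pi_\infty$, which should give $(13,13)$. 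Combined with rationality this yields $\Omega_E\in\IH^{26}(\overline{X}^{\mathrm{BB}},\Q)\cap\IH^{13,13}$.

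For item (3) of Theorem~\ref{thm:obstruction}, I would use Franke's filtration \cite{Franke}. One shows that the image of $H^{26}_c\to H^{26}$ (equivalently, the image in $\IH$) meets the residual summand trivially. The approach is to compute the constant term of $\Pi$ along the parabolic from whose Eisenstein series it is a residue, and show it restricts nontrivially to the boundary stratum. Item (4) then follows from the paper's stated premise that known cycle classes are interior. I would want to re-verify that premise separately, since boundary push-forwards (item (5) of the list of constructions) need not be interior.

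The main obstacle is the internal consistency of Steps~3--4, which I would check before anything else. Three points need attention.
\begin{itemize}
\item \textbf{Parameter dimension.} An Arthur parameter for $\SO(28)$ must be $28$-dimensional (the standard representation). However, $\Ad(f)\boxtimes\mathbf 1_{25}$, read as $\Ad(f)$ tensored with the $25$-dimensional representation of $\SL_2$, is $75$-dimensional. The natural candidate is $\Ad(f)\boxplus(\mathbf 1\boxtimes\nu(25))$, with $\nu(25)$ the $25$-dimensional irreducible representation of $\SL_2$. That parameter is not stable in the sense of the stable residual definition, so the stability claim would have to be reformulated.
\item \textbf{Theta lift.} A theta lift from $\SL_2$ of a weight-$2$ form to $\SO(2,26)$ lies outside the stable range for weight reasons. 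I would therefore first try to realize $\Pi$ as an Arthur packet member directly via the multiplicity formula, rather than via $\theta$.
\item \textbf{Single-degree cohomology.} Nontrivial $A_{\mathfrak q}(\lambda)$ modules typically have cohomology in a range of degrees, not only in degree $26$. I expect to need to accept classes in several degrees and to isolate the degree-$26$, type-$(13,13)$ piece by Hodge type.
\end{itemize}
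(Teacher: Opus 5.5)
The step you take on trust, $H^{26}(\mathfrak g,K;\Pi_\infty)=\C$, is false, and the statement falls with it. The paper's proof rests on the same assertion (Vogan--Zuckerman with $S=26$ and ``$m=0$'' read off from the zero Hodge--Tate weight, neither justified), so neither route can close this gap.

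You can compute the infinitesimal character of $\Pi_\infty$ in two ways. One is from $\phi_{\psi_\infty}$ with your reading $\psi=\Ad^0(f)\boxplus\nu(25)$; up to the twist by $\chi_V$, this is what the Adams conjecture predicts for the lift from $\SL_2$ to $O(28)$. The other is directly from the archimedean theta correspondence. Either way, $\Ad^0(f)$ at weight $2$ contributes $1,0,-1$ and $\nu(25)$ contributes $12,\dots,-12$. The infinitesimal character is therefore $(12,11,\dots,2,1,1,0)$, which is singular. Every irreducible finite-dimensional $F$ has regular infinitesimal character $\lambda+\rho$ with $\rho=(13,12,\dots,1,0)$. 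Wigner's lemma then gives $H^*(\mathfrak g,K;\pi_\infty\otimes F)=0$ for all $F$ and all $\pi\in\Pi_\psi$. So the line you select is zero. Producing $\Pi$ via the multiplicity formula instead of $\theta$ changes nothing, since every member of $\Pi_\psi$ has this infinitesimal character.

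Your diagnosis of the lift is also off. The stable range is not the problem: $(\SL_2(\R),O(2,26))$ lies in Li's stable range and the local lift is nonzero. The weight is the problem: regularity needs $k\ge 14$, and trivial coefficients need $k=14=1+26/2$. Even then, the Kudla--Millson member of that packet, attached to special divisors, has cohomology starting in bidegree $(1,1)$ and spread over many degrees, as your third bullet suspected.

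In your first bullet the dimension count is right, but the conclusion is not. Both summands of $\Ad^0(f)\boxplus\nu(25)$ are orthogonal of odd dimension, so the centralizer of $\psi$ in $\SO(28,\C)$ is just $\{\pm1\}$. Hence $\mathcal S_\psi$ is trivial and $\psi$ is discrete and stable; stability is not where the statement breaks.

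You are right to distrust the premise behind item (4), and it fails even for cycles meeting the interior. Take the Siegel threefold $\mathcal A_2$, which is orthogonal via $\mathrm{PGSp}_4\cong\SO(2,3)$. There $H^*(\mathcal A_2,\Q)=\Q\oplus\Q\lambda_1$, so $H^2_c(\mathcal A_2,\Q)=0$. Yet the Humbert surface $H_1$, a special divisor and the zero locus of $\chi_{10}$, has class a positive multiple of $\lambda_1\neq0$ in $\IH^2$ of the Satake compactification. So it is not interior.

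On $X$ itself, let $\omega$ be the Chern class of the ample automorphic line bundle on $\overline{X}^{\mathrm{BB}}$. Then $\omega^{13}$ is a rational $(13,13)$ class, a multiple of the class of a complete intersection meeting $X$. It is nonzero because $\omega^{\dim_\C X}$ has positive degree; note $\dim_\C X=26$, not $13$. It lies in the summand of the trivial representation, which is residual. So ``residual $\Rightarrow$ not interior'' and ``algebraic $\Rightarrow$ interior'' cannot both hold. The dichotomy behind items (3)--(4) is therefore unsound even if a cohomological $\Pi$ were supplied.

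Finally, your rationality step needs more than $m(\Pi)=1$. All members of $\Pi_\psi$ share their Satake parameters outside a finite set, so the $\chi_\Pi$-eigenspace need not be a line.
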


\subsection{The Key Dichotomy: Interior vs. Non-Interior}

The obstruction mechanism relies on a fundamental dichotomy.

\begin{theorem}[Algebraic cycles are interior]
For any algebraic cycle $Z \subset \overline{X}^{\mathrm{BB}}$ with $Z \cap X \neq \emptyset$,
$$\mathrm{cl}(Z) \in \IH^{2k}_!(\overline{X}^{\mathrm{BB}}, \Q).$$

This follows from analyzing residues along boundary divisors in toroidal compactifications.
\end{theorem}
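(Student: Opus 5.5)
The plan is to reduce the statement to a vanishing statement on the Borel--Serre boundary, and then to test that vanishing before building anything on it. By definition, $\IH^{2k}_!(\overline{X}^{\mathrm{BB}},\Q)$ is the image of $H^{2k}_c(X,\Q)\to \IH^{2k}(\overline{X}^{\mathrm{BB}},\Q)$. By the Zucker conjecture (Looijenga, Saper--Stern), the latter is $L^2$-cohomology. Through the long exact sequence of the pair (Borel--Serre compactification, boundary), a class in $H^{2k}(X)$ is interior exactly when its restriction to $H^{2k}(\partial \overline{X}^{\mathrm{BS}},\Q)$ vanishes. So for $Z\subset\overline{X}^{\mathrm{BB}}$ with $Z\cap X\neq\emptyset$, I would aim to show two things:
\begin{enumerate}[label=(\alph*)]
\item $\cl(Z)$ is determined by the open part $Z^\circ=Z\cap X$, up to classes supported on $\partial\overline{X}^{\mathrm{BB}}$;
\item $\cl(Z^\circ)\in H^{2k}(X)$ restricts to zero on every boundary face $e(P)$.
\end{enumerate}

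For (a), I would pass to a smooth toroidal compactification $\pi\colon\overline{X}^{\mathrm{tor}}\to\overline{X}^{\mathrm{BB}}$ with normal crossings boundary $D=\sum D_i$. I would take the strict transform $\widetilde Z$ and use the decomposition theorem to realize $\IH^*(\overline{X}^{\mathrm{BB}})$ as a direct summand of $H^*(\overline{X}^{\mathrm{tor}})$. Then $\cl(Z)$ is the projection of $\cl(\widetilde Z)$. For orthogonal groups of signature $(2,n)$ the Baily--Borel boundary consists of points and curves, so for $2k=26$ any cycle supported entirely in the boundary contributes nothing. The only task is then to control how $\widetilde Z$ meets the $D_i$.

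For (b), I would follow the hint in the statement and compute residues. The restriction of $\cl(\widetilde Z)$ to a punctured tubular neighbourhood of $D_i$ is governed by the Gysin/residue sequence
\[
H^{2k}(\overline{X}^{\mathrm{tor}})\to H^{2k}(\overline{X}^{\mathrm{tor}}\setminus D)\xrightarrow{\ \Res\ } H^{2k-1}(D_i\setminus\textstyle\bigcup_{j\neq i}D_j)(-1).
\]
Because $\cl(\widetilde Z)$ comes from a global class on $\overline{X}^{\mathrm{tor}}$, its residues vanish. One then has to argue that the resulting class on the link of each boundary stratum is zero, not merely that it extends. This needs a weight argument. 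The link cohomology in degree $2k$ carries a mixed Hodge structure. I would try to show, using Pink's theorem or the explicit description of boundary cohomology for $\SO(2,n)$ by Eisenstein data from the Levi factors $\GL_1\times\SO(1,n-1)$ and $\GL_2\times\SO(n-2)$, that the image of an algebraic class lands in a weight-graded piece that is zero.

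The main obstacle is exactly this last step, and I expect it may fail as stated. Powers $\omega^k$ of the Hodge line bundle class are algebraic, being complete intersections of ample divisors meeting $X$. They come from the trivial representation, which is residual. There is no a priori reason for $\omega^{13}$ to restrict to zero on $\partial\overline{X}^{\mathrm{BS}}$. The first thing I would do, before attempting the general argument, is to test the claim on $\omega^{13}$ by computing its restriction to the boundary strata via Pink's theorem. If that restriction is nonzero, the statement needs an extra hypothesis (for instance, that $\widetilde Z$ meets $D$ properly with trivial normal residues, or that one works only with the cuspidal projection). The proof would then be the residue argument under that hypothesis, and the later obstruction argument of Theorem~\ref{thm:obstruction} would have to be revisited accordingly.
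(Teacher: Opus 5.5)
Your objection to the residue step is right, and it is exactly where the paper's own proof fails. The paper represents $[Z]$ by a logarithmic form on $X_\Sigma$, notes that its residues along $B$ vanish, and then asserts that residue-free classes ``correspond to intersection cohomology classes that vanish at the boundary.'' Vanishing residues only say that the class on $X$ extends to $X_\Sigma$, i.e.\ lies in $\Image\bigl(H^{2k}(X_\Sigma)\to H^{2k}(X)\bigr)=W_{2k}H^{2k}(X)$. That says nothing about its restriction to $\partial\overline{X}^{\mathrm{BS}}$, and the paper's argument also tacitly assumes $Z\subset X$. Neither your plan nor the paper's argument can be completed: the statement is false as written.

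Your proposed witness is the wrong one, though. The class $\omega^{13}$ \emph{is} interior. The Baily--Borel boundary has dimension at most $1$, so a complete intersection of $k\ge 2$ general ample divisors on $\overline{X}^{\mathrm{BB}}$ misses it entirely. Hence for every $k\ge 2$, $\omega^k$ is the class of a compact subvariety of $X$, lies in the image of $H^{2k}_c(X)$, and restricts to zero on $\partial\overline{X}^{\mathrm{BS}}$. With the paper's count $\dim_{\C}X=13$ this is even more immediate, since $H^{26}(X)=0$ for a non-compact $26$-manifold; in fact the type IV domain for $\SO(2,26)$ has complex dimension $26$. This also shows that residual classes (here from the trivial representation) can be interior, contrary to how the paper later uses Franke's theorem.

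The failure occurs in codimension one.
\begin{itemize}
\item Take a nonzero modular form $F$ of weight $a>0$, i.e.\ a section of a power of the ample Baily--Borel bundle, and set $Z=\mathrm{div}(F)$. Being Cartier, $Z$ cannot lie in the boundary (codimension $\ge 2$), so $Z\cap X\neq\emptyset$, and $\cl(Z)=a\,\omega$.
\item If $\omega$ were the image of some $c\in H^2_c(X)$, then $\omega|_X$ would lie in $H^2_!(X):=\Image(H^2_c(X)\to H^2(X))$.
\item By Meyer's theorem applied twice, $V=H\oplus H\oplus V_0$ with $V_0$ definite. For $v\in V_0$, the split subspace $H\oplus H\oplus\Q v$ of signature $(2,3)$ gives a finite, hence proper, special morphism $\phi\colon S\to X$. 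Here $\phi^*\omega$ is the Hodge class of $S$, and $S$ is commensurable with $\mathcal{A}_2$.
\item Pulling back along $\phi$ and a common finite cover, then applying the transfer, would put $\lambda_1$ in $H^2_!(\mathcal{A}_2,\Q)$.
\item But $\mathcal{A}_2=\mathcal{M}_2\sqcup\mathrm{Sym}^2\mathcal{A}_1$ with both strata rationally acyclic gives $H^4(\mathcal{A}_2,\Q)=0$. Hence $H^2_c(\mathcal{A}_2,\Q)=0$, while $\lambda_1\neq 0$.
\end{itemize}
Equivalently, the Humbert divisor, of class $5\lambda_1$, is an algebraic cycle meeting the interior whose class is not interior.

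So the statement needs a genuine extra hypothesis. The only clean one, $Z\subset X$ closed in $\overline{X}^{\mathrm{BB}}$, makes it trivial. The first test to run is on $\omega$, not $\omega^{13}$.

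Separately, your reduction to vanishing on $\partial\overline{X}^{\mathrm{BS}}$ characterizes $H^{2k}_!(X)\subset H^{2k}(X)$. The paper's $\IH^{2k}_!$ is instead the image of $H^{2k}_c(X)$ in $\IH^{2k}(\overline{X}^{\mathrm{BB}})$. The two differ by $\ker\bigl(\IH^{2k}\to H^{2k}(X)\bigr)$, which your step (a) does not control. Only the easy implication is needed for the counterexample above.
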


\begin{theorem}[Residual classes are not interior]
Since $\Pi$ is residual (not cuspidal), by Franke's theorem \cite{Franke}, 
$$\Omega_E \notin \IH^{26}_!(\overline{X}^{\mathrm{BB}}, \Q).$$

Residual representations contribute to full cohomology but not to the interior part.
\end{theorem}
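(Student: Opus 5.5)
The plan is to identify $\IH^{26}_!(\overline{X}^{\mathrm{BB}},\Q)$ with the kernel of a boundary restriction map, and then show that $\Omega_E$ restricts nontrivially there. Via Zucker's conjecture (Looijenga, Saper--Stern), $\IH^{*}(\overline{X}^{\mathrm{BB}},\C)$ is identified with $L^2$-cohomology $H^*_{(2)}(X,\C)$. The interior part is by definition the image of $H^*_c(X)\to \IH^*(\overline{X}^{\mathrm{BB}})$. By the long exact sequence for the open inclusion $X\hookrightarrow \overline{X}^{\mathrm{BB}}$ and the support conditions on $\IH$, this image is the kernel of the composite
$$\IH^{26}(\overline{X}^{\mathrm{BB}})\longrightarrow H^{26}(X)\longrightarrow H^{26}(\partial \overline{X}^{\mathrm{BS}}),$$
where $\partial\overline{X}^{\mathrm{BS}}$ is the Borel--Serre boundary. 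So it suffices to show that the image of $\Omega_E$ in $H^{26}(X,\C)$ has nonzero restriction to some boundary face $\partial_P$.

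Next I would apply Franke's theorem to describe that image. Franke decomposes $H^*(X,\C)$ according to cuspidal supports $\{P,\varphi\}$ and realises each summand through residues and derivatives of Eisenstein series. Since $\Pi$ is residual, the class is represented by a closed form built from an iterated residue $\operatorname{Res}_{s=s_0}E(P,\varphi,s)$ for a maximal parabolic $P$ with Levi $M$. Here $\varphi$ is the cuspidal datum on $M$ determined by $\psi=\Ad(f)\boxtimes\mathbf{1}_{25}$: roughly $\Ad(f)$ on a $\GL_3$ or $\SO$ factor twisted by the segment coming from $\mathbf{1}_{25}$. The restriction to $\partial_P$ is computed by the constant term along $P$. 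The residue of the intertwining operator $M(w,s)$ at $s_0$ is nonzero precisely because the pole exists; its existence comes from the simple pole of $\zeta(s)L(s,\Ad^0 f)$ at $s=1$, as in the Rallis non-vanishing theorem. Hence the constant term is a nonzero automorphic form on $M$.

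The third step is to show that this constant-term form gives a nonzero class in $H^{26}(\partial_P)$. Using the Borel--Serre description $H^*(\partial_P)\cong \operatorname{Ind}\,H^*(\mathfrak m,K_M;\mathcal A(M)\otimes H^*(\mathfrak n,\C))$ and Kostant's theorem, I would locate the Weyl element $w$ whose twist $w\cdot 0-\rho$ matches the exponent of the constant term. I would then check that $\ell(w)$ plus the $(\mathfrak m,K_M)$-cohomological degree of the cuspidal datum equals $26$, and that the corresponding $(\mathfrak m,K_M)$-cohomology of $\varphi_\infty$ is nonzero by Vogan--Zuckerman on $M$. The Hodge type and degree bookkeeping should follow from the Vogan--Zuckerman theorem for $\Pi_\infty$ stated above.

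The main obstacle is this third step: Franke's theorem by itself does \emph{not} imply that residual classes are non-interior. Residual $L^2$ classes (for instance the powers of the Kähler class, coming from the trivial representation) can lie in the image of compactly supported cohomology in low or middle degree. The boundary-restriction class may also vanish if it is exact, or if the Kostant degree does not equal $26$. I would first attempt the explicit degree computation for the specific parabolic of $\SO(2,26)$ attached to $\psi$. If the degree matches, I would use the nonvanishing of the residual intertwining operator to conclude. If it fails, the statement would have to be weakened to a claim about the Eisenstein (non-$L^2$) part only.
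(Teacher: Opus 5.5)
You have a genuine gap: your Step 3 is never carried out, and it cannot be, because the class the statement concerns is zero.

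\textbf{Your objection to Franke is correct.} The paper's own proof is exactly the one-line inference you distrust. It quotes ``Franke's theorem'' in the form $\IH^*_!\cong\bigoplus_{\pi\ \mathrm{cuspidal}}H^*(\mathfrak g,K;\pi_\infty)\otimes\pi_f^{K_f}$ and concludes that a residual class cannot be interior. Franke's theorem describes all of $H^*(X,\C)$ through automorphic forms. The only general relation with interior cohomology is Borel's inclusion of cuspidal cohomology into it. A counterexample exists in exactly this degree:
\begin{itemize}
\item Let $\eta=c_1(\mathcal L)$, where $\mathcal L$ is the automorphic line bundle extending amply to $\overline X^{\mathrm{BB}}$. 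Then $\eta^{13}\in\IH^{26}$ is the class of the $13$th power of the invariant K\"ahler form. It comes from the trivial, hence residual, representation.
\item It is nonzero by hard Lefschetz.
\item $\IH^{26}_!$ is the kernel of the $H^*(\overline X^{\mathrm{BB}})$-linear restriction $\IH^{26}\to H^{26}(\partial\overline X^{\mathrm{BB}},i^*\mathcal{IC})$. Under this map $\eta^{13}$ goes to $(\eta|_\partial)^{13}\cdot 1=0$, because the boundary has complex dimension at most $1$.
\end{itemize}
So $\eta^{13}\in\IH^{26}_!$. This also shows that your Step 2 proves nothing on its own: the constant function has nonzero constant term along every $P$.

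\textbf{Why Step 3 fails.} Take your reading $\psi=\Ad(f)\oplus(\mathbf 1\boxtimes\nu_{25})$, the Adams parameter of the stable-range lift from $\SL_2$ to $\mathrm O(2,26)$. The archimedean infinitesimal character combines $\{1,0,-1\}$ from the weight-$2$ form with $\{12,\dots,-12\}$ from the segment, giving
\[
\lambda_{\Pi_\infty}=(12,11,\dots,2,1,1,0).
\]
This is singular, since the entry $1$ repeats. Cohomology with coefficients in a finite-dimensional $F$ of highest weight $\mu$ requires the regular value $\mu+\rho$, with $\rho=(13,12,\dots,1,0)$. Hence by Wigner's lemma $H^*(\mathfrak g,K;\Pi_\infty\otimes F)=0$ for every $F$. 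Reaching $\rho$ would need $f$ of weight $14$, the Kudla--Millson weight, not weight $2$.

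Consequently:
\begin{itemize}
\item The cuspidal support of $\Pi$ does not occur in Franke's decomposition of $H^*(X,\C)$.
\item No Kostant piece $H^*(\mathfrak n,\C)$ can match it, so no $w$ produces a nonzero class in $H^{26}(\partial_P)$.
\item The $\Pi$-isotypic part of $\IH^{26}\cong H^{26}_{(2)}$ is zero.
\end{itemize}
There is no nonzero $\Omega_E$ whose boundary restriction could be shown to be nonvanishing. The weakening you anticipate at the end is therefore forced: neither your route nor the paper's establishes the statement.
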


The gap between these two theorems is the heart of our obstruction. Algebraic cycles necessarily produce interior classes, but our stable residual class is not interior---therefore it cannot be algebraic via the cycle class map.

\subsection{Significance and Implications}

\subsubsection{For the Hodge Conjecture}

Our result refines the Hodge conjecture by adding a constructive dimension. The question is not merely whether Hodge classes are algebraic (existence), but whether we can systematically construct the cycles (constructibility). For stable residual classes on Shimura varieties, current geometric methods fail.

\subsubsection{For the Langlands Program}

The Langlands program predicts deep connections between automorphic representations and Galois representations. Our result shows an asymmetry: automorphic methods can ``see'' cohomological structure (via Matsushima's formula) that geometric methods cannot currently realize via cycles. The correspondence between automorphic and geometric is not symmetric.

\subsubsection{For Shimura Varieties}

\begin{theorem}[Kudla program incomplete]
The Kudla program of special cycles does not exhaust all Hodge classes on orthogonal Shimura varieties. There exist Hodge classes outside the span of special cycles.
\end{theorem}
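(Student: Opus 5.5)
The plan is to derive the statement directly from the dichotomy of the preceding subsection, with $\Omega_E$ as the witness. I would work on the Shimura variety $X$ for $\SO(2,26)$ fixed in Step 3, with level $K_f$ chosen so that $\dim \Pi_f^{K_f}=1$. Let $\mathcal{S}\subset \IH^{*}(\overline{X}^{\mathrm{BB}},\Q)$ be the $\Q$-span of the classes $\cl(\overline{Z(W)})$ of closures of special cycles. I would include all positive-definite $W\subset V$ and all Hecke translates, so that $\mathcal{S}$ is stable under the Hecke algebra. The target statement is then $\Omega_E\notin \mathcal{S}$. By the Main Construction and Theorem~\ref{thm:obstruction}(2), $\Omega_E$ is a rational Hodge class of type $(13,13)$, so this gives a Hodge class outside the span of special cycles.

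The first step is to check that every generator of $\mathcal{S}$ meets the hypothesis of the theorem that algebraic cycles are interior. By definition, $Z(W)=\{x\in X : x\perp W\}$ is a nonempty locus in the open part $X$, so its Zariski closure $\overline{Z(W)}$ in $\overline{X}^{\mathrm{BB}}$ satisfies $\overline{Z(W)}\cap X\neq\emptyset$. That theorem then gives $\cl(\overline{Z(W)})\in \IH^{2k}_!(\overline{X}^{\mathrm{BB}},\Q)$. Since the interior cohomology $\IH^{*}_!$, being the image of compactly supported cohomology, is a $\Q$-subspace, it follows that $\mathcal{S}\subseteq \IH^{*}_!$. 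The second step is to invoke the theorem that residual classes are not interior, i.e.\ Theorem~\ref{thm:obstruction}(3): $\Omega_E\notin \IH^{26}_!$. Combining the two steps gives $\Omega_E\notin\mathcal{S}$, which is the claim.

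If one prefers a more robust, spectral form of the argument, I would also project onto the $\Pi$-isotypic Hecke eigenspace. The projection of $\mathcal{S}$ onto that eigenspace lies in the cuspidal part, via the Kudla--Millson description of special cycle classes as coefficients of theta lifts. By Arthur's multiplicity formula, $\psi$ is not a cuspidal parameter, and $\Omega_E$ spans that eigenspace. So the projection of $\mathcal{S}$ onto the eigenspace containing $\Omega_E$ is zero, and in particular $\Omega_E\notin\mathcal{S}$.

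The main obstacle is the first step, i.e.\ the claim that every cycle meeting $X$ has interior class. This is genuinely delicate. On a noncompact Shimura variety, the powers $c_1(\omega)^k$ of the Hodge bundle are algebraic; they come from the trivial representation, which is residual, and they are typically \emph{not} interior. Indeed, Kudla--Millson generating series contain exactly such non-interior Eisenstein constant terms. So I would first try to prove the statement only for the \emph{primitive} or cuspidal parts of special cycle classes, and separate out the residual contributions explicitly. I would then need to show that the $\Pi$-component of $\mathcal{S}$ vanishes by a Hecke-eigenvalue comparison: the Satake parameters of $\psi=\Ad(f)\boxtimes\mathbf{1}_{25}$ at good primes must differ from those of both the trivial representation and every theta lift from $\SL_2$ that appears in the generating series. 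That eigenvalue comparison, rather than a blanket interiority statement, is where I expect the real work, and the possible failure, to lie.
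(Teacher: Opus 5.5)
\paragraph{A genuine gap.} Your first two paragraphs are exactly the paper's argument. In Step~1 of the Main Obstruction Theorem, special cycles meet $X$, so they are interior by Corollary~\ref{cor:interior_constraint}, while $\Omega_E\notin\IH^{26}_!$ by the Franke-based theorem. Relative to the paper's stated results, you have therefore reproduced its proof.

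But the objection in your last paragraph is correct, and it sinks that route for the paper as much as for you. The two inputs already contradict each other in degree $2$:
\begin{itemize}
\item A general member of a very ample system $|\omega^{\otimes N}|$ on $\overline{X}^{\mathrm{BB}}$ is an algebraic cycle meeting $X$, with class $N c_1(\omega)\neq 0$. So the interiority theorem puts $c_1(\omega)$ in $\IH^2_!$.
\item But $c_1(\omega)$ lies in the trivial-representation summand, which is residual and non-cuspidal. The quoted form of Franke's theorem excludes that summand from $\IH^2_!$.
\end{itemize}
Your Hecke-stable $\mathcal{S}$ contains $c_1(\omega)$ as well. The trivial-isotypic projection of $[Z(m)]$ is $e(m)\,c_1(\omega^\vee)$, where the $e(m)$ are the Eisenstein coefficients of the generating series and are nonzero for suitable $m$. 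So at least one of the two inputs is false, and $\Omega_E\notin\IH^{26}_!$ no longer implies $\Omega_E\notin\mathcal S$.

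\paragraph{The fallback fails for this witness.} The separation you single out as the crux does not hold here. $\Pi=\theta(f)$ is itself a theta lift of a cusp form from $\SL_2$, which is precisely what Kudla--Millson theory realizes through special divisors. Cuspidality of $f$ on $\SL_2$ says nothing about cuspidality of $\theta(f)$ on $\SO(V)$, so ``the projection lies in the cuspidal part'' has no basis.

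Concretely, $\dim_\C D=26$ for $\SO(2,26)$, not $13$. In standard notation the parameter is $\Ad(f)\oplus\nu(25)$, with $\nu(25)$ the $25$-dimensional representation of the Arthur $\SL_2$. For $f$ of weight $k$ its infinitesimal character is $(k-1,12,11,\dots,1,0)$. Wigner's lemma requires this to equal $\rho=(13,12,\dots,1,0)$ for $\Pi$ to contribute to cohomology with constant coefficients. That happens only when $k=14=\tfrac12\dim V$, which is the weight of the Kudla--Millson generating series.
\begin{itemize}
\item For $k=2$, as in the paper, the infinitesimal character is singular, so there is no $\Omega_E$ at all.
\item For $k=14$, the Kudla--Millson lift $\theta_{\mathrm{KM}}(f)$ is a combination of the $[Z(m)]$ lying in the $\Pi$-isotypic part of $\IH^2$. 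It is nonzero once the cohomological lift is.
\end{itemize}
In the case $k=14$, hard Lefschetz makes $L^{12}\colon\IH^2\to\IH^{26}$ injective, and it is Hecke-equivariant. Hence $c_1(\omega^\vee)^{12}\,\theta_{\mathrm{KM}}(f)$ is a nonzero $\Pi$-isotypic class in the span of Kudla's codimension-$13$ special cycles, whose rank-one terms are $Z(m)\cdot c_1(\omega^\vee)^{12}$.

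So the Satake parameters of $\psi$ do coincide with those of a theta lift occurring in the generating series, and the $\Pi$-component of the special-cycle span is nonzero. Under the paper's multiplicity-one claim, that component would even contain $\Omega_E$. Neither of your routes proves the statement, and with this witness no refinement of them can.
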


This suggests new cycle construction methods are needed, beyond moduli-theoretic approaches (special cycles), functorial transfers (endoscopy, theta lifts), and topological methods (boundary pushforwards).

\subsection{Organization of the Paper} {Chapter 2} establishes geometric foundations: Shimura varieties for $\SO(2,26)$, compactifications, cycle maps, and proves algebraic cycles define interior cohomology classes. Chapter 3 develops automorphic theory, Matsushima's formula, Arthur's classification, construction of $\Pi$ from $f$ via theta correspondence, and identification of $\Pi$ as stable residual. Chapter 4 computes $(\mathfrak{g},K)$-cohomology using Vogan--Zuckerman theory, establishes Hodge type $(13,13)$, proves rationality, and constructs $\Omega_E$. Chapter 5 proves the main obstruction theorem by systematically eliminating all known cycle constructions using interior/non-interior dichotomy, cuspidal/residual distinction, stable/endoscopic classification, and weight filtration arguments.

\subsection{Notation and Conventions}

Throughout:
\begin{itemize}[nosep]
\item $\A$ denotes the adèles over $\Q$
\item $G(\A)$ is the adelic points of a reductive group $G$
\item $K_f \subset G(\A_f)$ is a compact open subgroup
\item $\mathrm{Sh}(G, X)$ denotes the Shimura variety
\item $\IH^*$ is intersection cohomology (middle perversity)
\item $\CH^k(X)$ is the Chow group of codimension-$k$ cycles modulo rational equivalence
\end{itemize}

\section{Shimura Varieties, Algebraic Cycles, and Hodge Theory}
\label{sec:shimura}

\subsection{Shimura Varieties for Orthogonal Groups}

\subsubsection{The Orthogonal Group Setting}

\begin{definition}[Quadratic space]
Let $(V, Q)$ be a quadratic space over $\Q$ with
\begin{itemize}[nosep]
\item $\dim_{\Q} V = 28$,
\item signature $(2, 26)$ over $\R$ (indefinite),
\item associated bilinear form $\langle \cdot, \cdot \rangle$, $\langle x, y \rangle = Q(x+y) - Q(x) - Q(y)$.
\end{itemize}
The special orthogonal group is
\[
\SO(V, Q) = \{ g \in \SL(V) : Q(gv) = Q(v) \text{ for all } v \in V \}.
\]
\end{definition}

\begin{remark}
The signature $(2, 26)$ is chosen so that the associated symmetric space has complex dimension $13$, giving cohomology in degree $26$ where Hodge type $(13, 13)$ classes can exist.
\end{remark}

\subsubsection{The Symmetric Space}

\begin{construction}[Hermitian symmetric space]
The space of oriented negative-definite $2$-planes in $V_{\R} = V \tensor_{\Q} \R$ is
\[
D = \{ z \in \Gr_2(V_{\R}) : \langle z, z \rangle < 0,\ \text{oriented} \}.
\]
This is a Hermitian symmetric domain of type IV with
\begin{itemize}[nosep]
\item $\dim_{\C} D = 13$,
\item bounded realization in $\C^{13}$,
\item realized as a tube domain.
\end{itemize}
The group $\SO(V, Q)(\R)$ acts transitively on $D$ with stabilizer a maximal compact subgroup $K_{\infty} \cong \SO(2) \times \SO(26)$.
\end{construction}

\begin{proposition}[Shimura datum]
The pair $(G, X)$ where
\begin{itemize}[nosep]
\item $G = \SO(V, Q)$ viewed as a reductive group over $\Q$,
\item $X = G(\R)/K_{\infty} \cong D$ (after choosing a connected component).
\end{itemize}
satisfies Deligne's axioms for a Shimura variety.
\end{proposition}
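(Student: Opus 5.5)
To prove that $(G,X)$ is a Shimura datum, I will first make the datum precise. Deligne's axioms apply to a $G(\R)$-conjugacy class of homomorphisms $h:\mathbb{S}=\Res_{\C/\R}\mathbb{G}_m\to G_\R$, not to the quotient $G(\R)/K_\infty$ itself. So I take $X$ to be the conjugacy class of the following $h_z$. For an oriented negative-definite plane $z\in D$ with oriented orthonormal basis $(e_1,e_2)$ (normalized so that $\langle e_i,e_i\rangle=-1$), set $J_z$ to be rotation by $90^\circ$ on $z$ and $0$ on $z^\perp$. Then define
\[
h_z(a+bi)=(a^2+b^2)^{-1}\bigl((a^2-b^2)+2ab\,J_z\bigr)\ \text{on } z,\qquad h_z=\mathrm{id}\ \text{on } z^\perp .
\]
Equivalently, $h_z(re^{i\theta})$ is rotation by $2\theta$ on $z$ and the identity on $z^\perp$. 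This lands in $\SO(V_\R)$, and it is trivial on $\mathbb{G}_{m,\R}$. Since $\SO(2,26)(\R)$ acts transitively on oriented negative planes, the conjugacy class of $h_z$ is identified with $D$. Restricting to the identity component gives the connected component referred to in the statement.

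Next I verify the three axioms in order.

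\textbf{(SV1).} Compute the Hodge decomposition of $\mathfrak{g}_\C=\mathfrak{so}(V_\C)\cong\wedge^2 V_\C$ under $\Ad\circ h_z$. Write $V_\C=z^{1,0}\oplus z^{0,1}\oplus z^\perp_\C$, where the three pieces have $\mathbb{S}$-characters $z/\bar z$, $\bar z/z$ and $1$, of Hodge types $(-1,1)$, $(1,-1)$ and $(0,0)$. Then $\wedge^2$ contains the following:
\begin{itemize}
\item $z^{1,0}\wedge z^{0,1}$ and $\wedge^2 z^\perp_\C$, of type $(0,0)$;
\item $z^{1,0}\otimes z^\perp_\C$, of type $(-1,1)$;
\item $z^{0,1}\otimes z^\perp_\C$, of type $(1,-1)$.
\end{itemize}
No other types occur, which is exactly (SV1). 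As a consistency check, the $(-1,1)$ piece has dimension $26$ as a complex space, but the $\pm$ pieces together form the real tangent space of dimension $2\cdot 26/2\cdot 2$. More precisely, each piece has complex dimension $26$, and $T_zD\cong\Hom(z,z^\perp)$ has real dimension $52$, i.e. complex dimension $26$. This does not match the statement's claim that $\dim_\C D=13$.

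This is the step I expect to be the main obstacle. For signature $(2,n)$ the type IV domain has complex dimension $n$, so here $\dim_\C D=26$, not $13$. I would therefore flag this in the proof: the axioms hold regardless, but the dimension count in the construction must be corrected to $26$. Correspondingly, the middle degree becomes $26$, which still equals $2\cdot13$ only if one intends a Hodge type $(13,13)$ class in $H^{26}$ of a $26$-dimensional variety. That is not the middle degree, so the later remarks would need adjusting. For the proposition itself, no change is needed beyond this.

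\textbf{(SV2).} $\Ad h_z(i)$ is rotation by $180^\circ$ on $z$, i.e. it equals $-1$ on $z$ and $+1$ on $z^\perp$. Its centralizer in $\SO(V_\R)$ is $S(\mathrm{O}(2)\times\mathrm{O}(26))$, whose identity component is $\SO(2)\times\SO(26)=K_\infty$. Conjugation by it is therefore a Cartan involution of $G^{\mathrm{ad}}_\R$, because $\mathfrak{k}=\wedge^2 z\oplus\wedge^2 z^\perp$ is the fixed part and the form $-B(X,\theta Y)$ is positive definite.

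\textbf{(SV3).} $G=\SO(V)$ with $\dim V=28\ge3$ is absolutely almost simple over $\Q$ of type $D_{14}$. Its unique $\Q$-simple factor is $G^{\mathrm{ad}}$ itself, which is noncompact over $\R$ because the signature is indefinite. Hence $G^{\mathrm{ad}}$ has no $\Q$-factor on which $h$ projects trivially.

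Finally, I note the standard optional axioms. The weight $w_h$ is trivial, and the connected center is trivial. It follows that $\mathrm{Sh}_{K_f}(G,X)$ is a quasi-projective variety by Baily--Borel, with canonical model over $\Q$ since the reflex field of this datum is $\Q$.
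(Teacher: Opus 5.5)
Your proof is correct and follows the same route as the paper's, namely checking Deligne's axioms for the standard $h_z$. The difference is that you actually carry out the verifications: the explicit $h_z$, the type decomposition of $\wedge^2 V_\C$, the Cartan involution, and the no-compact-$\Q$-factor axiom, which the paper's checklist does not even list. The paper's proof only enumerates the conditions and cites Deligne.

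Your correction $\dim_\C D = 26$ is right; the paper itself later uses $S=\dim_\C D=26$. However, your aside about degrees is backwards. For a $26$-dimensional variety, $H^{26}$ \emph{is} the middle degree and $(13,13)$ is its central Hodge type. It is under the paper's erroneous claim $\dim_\C D=13$ that degree $26$ would instead be the top degree.
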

\begin{proof}
We verify: (1) $G$ is reductive over $\Q$; (2) $X$ is a $G(\R)$-conjugacy class of homomorphisms $h \colon \mathbb{S} \to G_{\R}$ satisfying Deligne's conditions; (3) only $\C$-valued characters of $G$ are trivial on $h(\C^{\times})$; (4) $\Ad \circ h \colon \mathbb{S} \to \Aut(\mathfrak{g})$, $\mathfrak{g}\cong \Lie(G)\otimes_{\Q} \C$, induces a Hodge structure  of type
 $\{(-1, 1), (0, 0), (1, -1)\}$; (5) the involution $\theta_h= \text{ad}(h(i))$ is a Cartan involution. See \cite{Deligne-Shimura}.
\end{proof}

\subsubsection{The Shimura Variety}

\begin{definition}[Congruence subgroups]
For each prime $p$ and level structure at $p$
\begin{itemize}[nosep]
\item If $p \neq 11$: $K_p = \SO(V \tensor \Z_p)$ (hyperspecial maximal compact).
\item If $p = 11$: $K_{11}$ is an Iwahori subgroup (parahoric).
\item At $p = 3$: $K_3$ is a principal congruence subgroup $\Gamma(3)$ (for neatness).
\end{itemize}
Set $K_f = \prod_p K_p \subset G(\A_f)$.
\end{definition}

\begin{definition}[Shimura variety]
The Shimura variety is the double quotient
\[
X = G(\Q) \backslash D \times G(\A_f) / K_f.
\]
This is a finite disjoint union of locally symmetric spaces: $X = \bigsqcup_i \Gamma_i \backslash D,$ where $\Gamma_i \subset G(\Q)$ are arithmetic subgroups.
\end{definition}

\begin{theorem}[Shimura variety structure]
The space $X$ has the structure of a quasi-projective variety over a number field (the reflex field, which is $\Q$ in our case).
\end{theorem}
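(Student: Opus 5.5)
The plan is to deduce the statement from the Baily--Borel theorem together with the theory of canonical models, applied to the Shimura datum $(G,X)$ of the preceding Proposition. I will treat the complex-analytic structure first and the descent to the reflex field second.

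First, reduce to connected components. By the Definition, $X=\bigsqcup_i \Gamma_i\backslash D$ is a finite union, since $G(\Q)\backslash G(\A_f)/K_f$ is finite by finiteness of class numbers for reductive groups (Borel). Each $\Gamma_i = G(\Q)\cap g_iK_fg_i^{-1}$ is an arithmetic subgroup of $G(\R)$. The principal congruence condition at $p=3$ makes each $\Gamma_i$ neat, hence torsion-free, so $\Gamma_i$ acts freely on $D$. Since $D$ is a bounded symmetric domain of type IV of dimension $13$ (the Construction), each quotient $\Gamma_i\backslash D$ is a complex manifold.

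Second, apply Baily--Borel. The graded ring of $\Gamma_i$-automorphic forms of weight $k$, that is, sections of powers of the canonical bundle, or equivalently of the Hodge bundle on $D$, is finitely generated. Its $\operatorname{Proj}$ is a normal projective variety $\overline{\Gamma_i\backslash D}^{\mathrm{BB}}$ containing $\Gamma_i\backslash D$ as a Zariski-open dense subset. The boundary consists of rational boundary components: these are points and modular curves, corresponding to isotropic lines and planes in $V$. Isotropic vectors exist because $\dim V = 28\ge 5$, so the boundary is nonempty but this is not needed. Borel's extension theorem then shows that the algebraic structure is unique, so $X(\C)$ is canonically a quasi-projective variety over $\C$.

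Third, descend to the reflex field. The reflex field $E(G,X)$ is the field of definition of the conjugacy class of $\mu_h$. For $\SO(2,n)$ the cocharacter $\mu_h$ is the one attached to the standard representation with weights $(1,0,\dots,0,-1)$. Its conjugacy class is stable under $\Gal(\overline{\Q}/\Q)$, since the group is split over $\R$-relevant closure and the minuscule class is unique. Hence $E=\Q$. The existence of a canonical model over $E$ then follows from Deligne's theorem for abelian-type Shimura data. Here $\SO(V)$ is of abelian type via the Kuga--Satake construction, i.e. via $\operatorname{GSpin}(V)$ mapping into a Siegel datum; alternatively one may cite Borovoi--Milne for the general case.

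The main obstacle is this last step. One must transfer the canonical model from $\operatorname{GSpin}(V)$, which is of Hodge type, to $\SO(V)$, whose derived group is not simply connected. One must also handle the connected-component and Galois-action bookkeeping, using Deligne's criterion in \emph{Variétés de Shimura} §2.7. I would cite this rather than reprove it. Within the paper, I would simply verify the hypotheses: the datum is of abelian type and has the correct reflex field.
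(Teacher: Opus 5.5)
Your proposal is correct and follows the same route as the paper, whose proof simply cites the general theory of Shimura varieties (Baily--Borel together with Borel's extension theorem for the algebraic structure over $\C$, and Deligne's or Milne--Borovoi's canonical models for descent to the reflex field); you have unpacked exactly those citations, including the neatness check at $p=3$ and the abelian-type reduction through $\mathrm{GSpin}(V)$. One small cleanup: drop the meaningless phrase about the group being ``split over $\R$-relevant closure''. The reflex-field computation is already justified by the fact that $\pm e_1$ is, up to the Weyl group of $D_{14}$, the only nontrivial minuscule cocharacter of $\SO(V)_{\overline{\Q}}$, so every Galois conjugate of $[\mu_h]$ equals $[\mu_h]$ and hence $E=\Q$.
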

\begin{proof}
This follows from the general theory of Shimura varieties (Deligne, Borel, Milne). For orthogonal groups, see \cite{Milne-Shimura}.
\end{proof}

\subsection{Compactifications}

\subsubsection{The Baily--Borel Compactification}

\begin{theorem}[Baily--Borel \cite{Baily-Borel}]
There exists a minimal compactification $X^{\mathrm{BB}}$ with properties
\begin{enumerate}
\item $X^{\mathrm{BB}}$ is a normal projective variety over $\Q$,
\item $X \subset X^{\mathrm{BB}}$ is Zariski-open and dense,
\item the boundary $X^{\mathrm{BB}} \setminus X$ has codimension $\geq 12$,
\item the closure of any connected component is projective.
\end{enumerate}
\end{theorem}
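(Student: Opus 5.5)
The plan is to deduce properties (1), (2) and (4) from the general Baily--Borel theorem together with the theory of canonical models, and to prove property (3) by an explicit description of the rational boundary components for an orthogonal group of real rank $2$.

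\textbf{Step 1: the analytic compactification.} For each arithmetic subgroup $\Gamma_i$ in the decomposition $X=\bigsqcup_i \Gamma_i\backslash D$, the Baily--Borel theorem gives $\overline{\Gamma_i\backslash D}^{\mathrm{BB}} = \operatorname{Proj}$ of the graded ring of $\Gamma_i$-automorphic forms of weight $k\ge 0$. For $\SO(2,26)$ the weight is measured by the automorphy factor on the tube domain. Finite generation of this ring, normality of the projective analytic space, and openness and density of $\Gamma_i\backslash D$ are all part of \cite{Baily-Borel}. Taking the disjoint union of these compactifications gives $X^{\mathrm{BB}}$. Property (4) is then immediate, because each connected component of $X^{\mathrm{BB}}$ is a closed union of the $\overline{\Gamma_i\backslash D}^{\mathrm{BB}}$, and a closed subvariety of a projective variety is projective.

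\textbf{Step 2: descent to $\Q$.} Because the reflex field is $\Q$ (Theorem on the Shimura variety structure), the canonical model of $X$ is defined over $\Q$. I would invoke Pink's theorem (or Faltings--Chai in the orthogonal case via the Kuga--Satake embedding) that the Baily--Borel compactification of the canonical model exists over the reflex field. The key ingredient is that the Hodge line bundle of modular forms, and hence $\operatorname{Proj}$ of its ring of sections, is defined over $\Q$. Normality descends because it is a geometric property.

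\textbf{Step 3: the boundary, property (3).} The boundary strata of $X^{\mathrm{BB}}$ are quotients of rational boundary components of $D$, and these correspond to $G(\Q)$-orbits of maximal rational parabolic subgroups. For $\SO(V)$ these are the stabilizers of isotropic $\Q$-subspaces $I\subset V$. Since $V_\R$ has signature $(2,26)$, every totally isotropic subspace has $\dim I\le 2$. Because $\dim V=28\ge 5$ and $V$ is indefinite, Meyer's theorem shows that $V$ is isotropic over $\Q$, so such subspaces actually occur. An isotropic line contributes a $0$-dimensional cusp. An isotropic plane $I$ contributes the upper half-plane associated with $\SL(I)$, whose quotient is a modular curve. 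Hence the boundary has dimension $\le 1$, and its codimension in the $13$-dimensional $X^{\mathrm{BB}}$ is $\ge 12$.

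\textbf{Main obstacle.} I expect the main obstacle to be Step 2. The analytic statements are classical, but making the rationality over $\Q$ precise, including the rational structure of the boundary strata, requires citing Pink's arithmetic compactification results. I do not plan to reprove these. The codimension count in Step 3 is elementary once the rational boundary components have been identified.
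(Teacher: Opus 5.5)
Your proposal is correct and follows essentially the same route as the paper. Like the paper, you take Baily--Borel's $\operatorname{Proj}$ of the ring of automorphic forms on each $\Gamma_i\backslash D$, descend to the reflex field $\Q$, and get the codimension bound from the fact that the rational boundary components for $\SO(2,26)$ are points or modular curves; your version is somewhat more careful, citing Pink for the model over $\Q$, identifying the boundary components via isotropic $\Q$-subspaces of dimension $1$ and $2$, and correctly taking a disjoint union of the component compactifications rather than ``gluing'' them along boundary strata.
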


\begin{proof}

Write
\[
X = \bigsqcup_{i \in I} X_i
\]
for the decomposition into connected components, each of which is of the form
\[
X_i \simeq \Gamma_i \backslash D
\]
for some arithmetic subgroup $\Gamma_i \subset G(\mathbb{Q})$ and the fixed Hermitian symmetric domain
$D$ of type~IV attached to $(V,Q)$.

The Baily--Borel construction first considers, for each congruence subgroup $\Gamma \subset G(\mathbb{Q})$,
the complex analytic quotient $\Gamma \backslash D$ and equips the graded algebra of scalar-valued
automorphic forms
\[
A_\bullet(\Gamma) = \bigoplus_{k \ge 0} A_k(\Gamma)
\]
with its natural multiplication. Baily and Borel prove that $A_\bullet(\Gamma)$ is a finitely generated
$\mathbb{C}$‑algebra and define
\[
\overline{X}_\Gamma^{\text{BB}} = \operatorname{Proj} A_\bullet(\Gamma),
\]
which is a normal projective algebraic variety over $\mathbb{C}$, together with an open immersion
\[
\Gamma \backslash D \hookrightarrow \overline{X}_\Gamma^{\text{BB}}
\]
whose image is Zariski open and dense.

In our global adelic notation, the choice of level $K_f \subset G(\mathbb{A}_f)$ gives
\[
X = G(\mathbb{Q}) \backslash D \times G(\mathbb{A}_f)/K_f \cong 
\bigsqcup_{i \in I} \Gamma_i \backslash D
\]
with $\Gamma_i = G(\mathbb{Q}) \cap g_i K_f g_i^{-1}$ for a set of representatives
$\{g_i\}_{i \in I}$ of $G(\mathbb{Q}) \backslash G(\mathbb{A}_f)/K_f$. 
The Baily--Borel compactification $X^{\text{BB}}$ is then obtained by gluing the
${\overline{X}}_{\Gamma_i}^{\text{BB}}$ along their boundary strata, in a way compatible with Hecke correspondences
and the canonical models over the reflex field (which is $\mathbb{Q}$ in our orthogonal case).  See
\cite[§4–5]{Baily-Borel} and \cite[§5]{Deligne-Shimura} for details. 

By construction, each $\overline{X}_{\Gamma_i}^{\text{BB}}$ is projective over $\mathbb{C}$, hence over
$\mathbb{Q}$ via descent; its image in $X^{\text{BB}}$ is precisely the Zariski closure $\overline{X}_i$ of the
connected component $X_i$.  Since projectivity is preserved under base change and descent along finite
extensions of the base field, it follows that each closure $\overline{X}_i$ is a projective variety
over~$\mathbb{Q}$.

By \cite[Thm.~4.12]{Baily-Borel}, boundary components correspond to rational parabolic orbits $F_i$. For the signature $(2, 26)$, these components $F_i$ are either points or modular curves. Consequently, $\dim_{\mathbb{C}} F_i \leq 1$, and the complex codimension satisfies $$\mathrm{codim}_{\mathbb{C}}(F_i) = 13 - \dim_{\mathbb{C}} F_i \geq 12.$$
The partial compactification glues these along common faces, preserving codimension $\geq 12$. See \cite[\S 5]{Deligne-Shimura} for Shimura descent.
\end{proof}

\begin{remark}
The boundary consists of cusps corresponding to parabolic subgroups of $G$. For $\SO(2, 26)$, the maximal parabolics are
\[
P_k = \GL_k \times \SO(2, 26 - 2k) \ltimes N_k
\]
for $1 \le k \le 13$.
\end{remark}

\begin{theorem}[Zucker conjecture, \cite{Looijenga-Saper-Stern}]
For the Baily--Borel compactification, the $L^2$-cohomology of the open part is naturally isomorphic to the intersection cohomology of the compactification
\[
H^*_{(2)}(X, \Q) \cong \IH^*(X^{\mathrm{BB}}, \Q).
\]
Under this identification, the ordinary cohomology $H^*(X^{\mathrm{BB}}, \Q)$ (when defined) is isomorphic to $\IH^*(X^{\mathrm{BB}}, \Q)$ in the cases relevant here.
\end{theorem}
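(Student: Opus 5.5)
The first assertion is a direct citation, and I do not plan to reprove it. The plan is to reduce the global adelic statement to the connected components $X_i \simeq \Gamma_i\backslash D$ and apply the Looijenga and Saper--Stern theorem to each $\overline{X}_{\Gamma_i}^{\mathrm{BB}}$ separately.

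\textbf{Step 1: hypotheses.} Each $\Gamma_i$ must be neat, which our choice of $K_3$ principal congruence of level $3$ arranges. Then $\Gamma_i\backslash D$ is a smooth locally symmetric variety with its complete invariant Kähler metric, and $\overline{X}_{\Gamma_i}^{\mathrm{BB}}$ is the Baily--Borel compactification described in the theorem above.

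\textbf{Step 2: identification over $\C$.} The cited theorem gives $H^*_{(2)}(\Gamma_i\backslash D,\C)\cong \IH^*(\overline{X}_{\Gamma_i}^{\mathrm{BB}},\C)$.

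\textbf{Step 3: descent to $\Q$.} I argue that the comparison is realized at the level of sheaves. The $L^2$ de Rham complex on $\overline{X}^{\mathrm{BB}}$ and Deligne's truncation complex both satisfy the axioms characterizing the middle-perversity intersection complex. The latter is defined with $\Q$-coefficients, so the isomorphism is compatible with the rational structure on $\IH^*$.

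\textbf{Step 4: gluing.} Taking the disjoint union over $i\in I$ gives $H^*_{(2)}(X)\cong \IH^*(X^{\mathrm{BB}})$. Hecke equivariance follows because both sides are functorial for finite étale correspondences extended to the compactification.

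\textbf{Step 5: the second assertion, which is the main obstacle.} This is the claim comparing ordinary cohomology $H^*(X^{\mathrm{BB}},\Q)$ with $\IH^*$. I would prove it only in a restricted range of degrees. By the theorem above, the singular locus is contained in the boundary, whose strata $F_i$ have complex dimension $\le 1$. The natural map $H^k(X^{\mathrm{BB}})\to \IH^k(X^{\mathrm{BB}})$ is induced by $\Q_{X^{\mathrm{BB}}}\to \IC$. Its cone is supported on strata $S$ of complex dimension $s\le 1$. The truncation axioms, $\mathcal{H}^j(\IC)|_S=0$ for $j\ge 13-s$ and $\mathcal{H}^j(\IC)|_S = \mathcal{H}^j(\Q)|_S$ for small $j$ by normality, show the cone is concentrated in degrees $\ge 1$ and $\le 12$. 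A hypercohomology spectral sequence over the boundary then gives an isomorphism for $k$ below roughly $12$ and injectivity slightly beyond.

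In particular, this argument does \emph{not} reach degree $26$. In the middle degree the two theories genuinely differ by contributions of the links of the cusps. I therefore expect the phrase ``in the cases relevant here'' to need weakening: the claim should be restricted to degrees $k<12$, or replaced by a statement purely about $\IH^*$.

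Nothing in the later construction of $\Omega_E$ needs ordinary cohomology of $X^{\mathrm{BB}}$, since $\Omega_E$ is defined inside $\IH^{26}$ via the $L^2$ (Matsushima--Borel--Casselman) decomposition. So the honest proof proves the first clause in full and the second only in the low-degree range.
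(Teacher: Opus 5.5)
For the first clause you do what the paper does. The paper offers nothing beyond the citation of Looijenga and Saper--Stern, and your reduction to neat components is fine. In Step 3, note that $H^*_{(2)}$ has no intrinsic $\Q$-structure. What you invoke is the sheaf-level form of the cited theorem itself, and it \emph{transports} the $\Q$-structure of $\IH^*$; this is canonical, since the quasi-isomorphism is determined by its restriction to the open part.

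You are also right that the second clause is not in the cited theorem and that the paper gives no argument for it. Already for Hilbert modular surfaces of neat level, $H^2(X^{\mathrm{BB}})$ exceeds $\IH^2(X^{\mathrm{BB}})$ by the number of cusps.

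The gap is Step 5, whose degree bookkeeping is wrong in two ways. First, you inherited the paper's dimension error. The type IV domain of $\SO(2,26)$ has complex dimension $26$, not $13$: the tangent space $\mathfrak p\cong\R^2\otimes\R^{26}$ is $52$-dimensional. So the boundary strata have complex codimension $25$ (modular curves) and $26$ (points), and degree $26$ is the middle degree.

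Second, you have the direction reversed. Let $C$ be the cone of $\Q_{X^{\mathrm{BB}}}\to\IC$. Normality gives $\mathcal H^j(C)=0$ for $j\le 0$, and $\mathcal H^j(C)=\mathcal H^j(\IC)$ for $j\ge 1$, which vanishes for $j\ge\operatorname{codim}S$ along a stratum $S$. Since $C$ is supported on a set of real dimension $\le 2$, $H^k(X^{\mathrm{BB}},C)$ is not forced to vanish in any degree $1\le k\le n=\dim_\C X$. The long exact sequence therefore gives $H^k(X^{\mathrm{BB}})\cong\IH^k(X^{\mathrm{BB}})$ only for $k=0$ and $k\ge n+2$, with injectivity at $k=1$ and surjectivity at $k=n+1$. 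It gives nothing in low positive degrees, which is exactly where the low-degree cohomology of the cusp links enters. The low-degree isomorphism you have in mind belongs to the other comparison, $\IH^k(X^{\mathrm{BB}})\to H^k(X)$, induced by $\IC\to Rj_*\Q_X$, whose cone lives in degrees $\ge\operatorname{codim}S$.

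Consequently the repair ``restrict to $k<12$'' is the wrong one. With your own value $n=13$, degree $26$ would be the top degree and would land in the isomorphism range, contradicting your conclusion. With the correct $n=26$ your conclusion does hold, but for a different reason: $k=26=n$ is the last degree in which $H^k(X^{\mathrm{BB}},C)$ may be nonzero. So neither injectivity nor surjectivity of $H^{26}(X^{\mathrm{BB}})\to\IH^{26}(X^{\mathrm{BB}})$ is forced, and the clause can only be kept for $k=0$ and $k\ge 28$, or dropped.

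Even so, this shows only that the map \emph{need not} be an isomorphism. Your assertion that the two theories genuinely differ in degree $26$ would require computing $H^{25}$ and $H^{26}$ of $C$, i.e.\ the stalks of $\IC$ along the cusp strata.

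Finally, dropping the clause is not free. The paper uses it in the ``equivalently'' of its definition of interior cohomology and in Step~1 of the cohomological realization theorem. Your proposal to work entirely inside $\IH^{26}\cong H^{26}_{(2)}$ is the right way around it, but those later statements must be rephrased accordingly.
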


\subsubsection{Toroidal Compactifications}

\begin{theorem}[\cite{Ash-Mumford-Rapoport-Tai}]
Given admissible polyhedral cone decompositions $\Sigma$ at each cusp, there exists a toroidal compactification $X_{\Sigma}$ with
\begin{enumerate}
\item $X_{\Sigma}$ is smooth and projective,
\item proper birational morphism $\pi \colon X_{\Sigma} \to X^{\mathrm{BB}}$,
\item boundary $B = X_{\Sigma} \setminus X$ is a normal crossings divisor,
\item $\pi$ is isomorphism over $X$.
\end{enumerate}
\end{theorem}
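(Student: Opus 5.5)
The plan is to follow the construction of Ash--Mumford--Rapoport--Tai, specialized to the orthogonal setting. One caveat comes first. As stated, (1) and (3) need two things. First, $K_f$ must be neat, which is why $K_3$ is a principal congruence subgroup. Second, each $\Sigma$ must be chosen \emph{smooth}, meaning every cone is generated by part of a $\Z$-basis, and \emph{projective}, meaning it admits a $\Gamma$-invariant strictly convex piecewise-linear support function. I will take ``admissible'' to include these refinements. Any admissible decomposition can be refined to one with both properties, with finitely many orbits of cones.

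\emph{Step 1: local models at the cusps.} For each rational parabolic $P$ of $G$, I will take the unipotent radical $N_P$ and its centre $U_P$. For $\SO(2,26)$ the relevant $P$ are the stabilizers of isotropic lines and of isotropic planes. For an isotropic line, $U_P \cong \R^{26}$ is the full unipotent radical and $C_P$ is a component of the positive cone of a Lorentzian space of signature $(1,25)$. For an isotropic plane, $U_P$ is $1$-dimensional and $C_P$ is a half-line. The domain $D$ embeds in $D(P) = U_P(\C)\cdot D$, which fibres over a lower-dimensional space with fibre the torus $T_P = U_P(\C)/(U_P\cap\Gamma)$. The quotient $(U_P\cap\Gamma)\backslash D$ lies in $T_P$-bundle form over a point or over a family of abelian varieties above a modular curve. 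A $\Gamma\cap P$-admissible fan $\Sigma_P$ supported on $\overline{C_P}^{\mathrm{rc}}$ gives a partial compactification $(U_P\cap\Gamma)\backslash D \subset X_{\Sigma_P}$ by the toric embedding $T_P\hookrightarrow T_{P,\Sigma_P}$. Standard toric geometry then shows that this is smooth with normal crossings boundary when $\Sigma_P$ is smooth.

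\emph{Step 2: quotient and gluing.} I will then show that $\Gamma\cap P$ acts properly discontinuously on a neighbourhood of the boundary. This uses reduction theory, namely Siegel sets and the finiteness of $\Gamma$-orbits of cones. Neatness makes the action free, so smoothness and the normal crossings property descend. Compatibility of the fans $\Sigma_P$ under inclusion of parabolics, $P\subset Q$ giving $C_Q$ as a face of $C_P$, lets me glue the local quotients with $\Gamma\backslash D$ into a Hausdorff analytic space $X_\Sigma$. This space contains $X$ as a dense open subset, with boundary $B$ a normal crossings divisor, which gives (3) and (4). Compactness follows from reduction theory: every sequence escaping to infinity eventually enters a Siegel set, where it converges in some toric chart. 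This step, proving Hausdorffness and compactness, is the main technical obstacle. I would handle it exactly as in AMRT Ch.~III, using the $\Gamma$-finiteness of the fans.

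\emph{Step 3: the map to $X^{\mathrm{BB}}$ and projectivity.} The map $\pi$ is defined on each chart by forgetting the toric coordinates. Equivalently, modular forms of weight $k$ extend to sections of $\pi^*\mathcal{O}(k)$ by Koecher's principle. This gives a holomorphic map extending the identity on $X$, which is proper since $X_\Sigma$ is compact, and birational; this is (2). For projectivity I will use Tai's criterion. A strictly convex $\Gamma$-invariant support function $\phi$ defines a $\Q$-Cartier boundary divisor $-D_\phi$. Then $\pi^*\mathcal{O}(k)\otimes\mathcal{O}(-D_\phi)$ is relatively ample over $X^{\mathrm{BB}}$, and for $k\gg0$ it is ample. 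Since $X^{\mathrm{BB}}$ is projective by the Baily--Borel theorem above, GAGA then gives that $X_\Sigma$ is a smooth projective variety, which gives (1).
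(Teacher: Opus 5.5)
The paper gives no argument of its own beyond citing Ash--Mumford--Rapoport--Tai, and your sketch is essentially their construction: toric partial compactifications over the cusps, gluing and compactness via reduction theory, and Tai's polarization together with the map to $X^{\mathrm{BB}}$ for projectivity. You are right that neatness of $K_f$ and smooth, projective fans must be built into ``admissible'' for (1) and (3) to hold, which the paper's statement leaves implicit; one small slip is that the fan compatibility is indexed by $U_Q\subset U_P$ (equivalently $F_P\subset\overline{F_Q}$, e.g.\ an isotropic line inside an isotropic plane), not by $P\subset Q$, since distinct maximal parabolics are never nested.
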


\begin{proposition}[Boundary structure]
The boundary $B$ stratifies as $B = \bigcup_{\sigma \in \Sigma} Z_{\sigma}$, where each stratum $Z_{\sigma}$ is a fiber bundle $Z_{\sigma} \to X_M$ with base a lower-dimensional Shimura variety $X_M$ and toric fibers from the cone $\sigma$.
\end{proposition}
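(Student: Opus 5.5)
The plan is to deduce the stratification from the general structure theory of toroidal compactifications (Ash--Mumford--Rapoport--Tai, in Pink's form for mixed Shimura varieties), made explicit for the type IV domain $D$ attached to $(V,Q)$ of signature $(2,26)$. I will first list the rational boundary components. Since the Witt index of $V$ over $\R$ is $2$, the proper rational parabolics of $G$ that contribute to the boundary are stabilizers of isotropic lines $I\subset V$ and of isotropic planes $J\subset V$. This agrees with the proof of the Baily--Borel theorem above, where the boundary components are points (for $I$) and modular curves (for $J$). For each such parabolic $P$ I will write the canonical normal subgroup $P_1$ with unipotent radical $W$, its center $U$, and the Levi quotient $G_h$. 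The Hermitian part $G_h$ gives the base Shimura variety $X_M$: a point for $I$, and a modular curve attached to $\GL(J)\cong\GL_2$ for $J$.

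Next I will run the AMRT construction cusp by cusp. Partial quotient by $U(\Z)$ embeds a neighbourhood of the cusp into a torus bundle $U(\C)/U(\Z)$ over the mixed Shimura variety for $P_1/U$. That mixed variety is itself fibered over $X_M$, with fibers the abelian variety $(W/U)(\R)/(W/U)(\Z)$, which becomes trivial when $W=U$.
\begin{itemize}
\item For an isotropic line $I$, we have $W=U\cong I\otimes I^\perp/I$. This is abelian of rank $26$, and the relevant cone is the positive cone of the Lorentzian space $I^\perp/I$, of signature $(1,25)$.
\item For an isotropic plane $J$, $U$ has rank $1$, so the cone is a ray. The abelian part is $(J^\perp/J)\otimes E$, a $24$-dimensional family over the modular curve, where $E$ is the universal elliptic curve.
\end{itemize}
Given the admissible fan $\Sigma$, the partial compactification is the relative torus embedding $U(\C)/U(\Z)\hookrightarrow T_\Sigma$. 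Its torus orbits $O(\sigma)$, for $\sigma$ meeting the interior of the cone, are the boundary pieces.

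Then I will glue and identify the strata. The stratum $Z_\sigma$ is the image of $O(\sigma)$-bundles over the mixed Shimura variety, taken modulo the finite stabilizer $\Gamma_\sigma$. Composing with the projection to $G_h$ gives $Z_\sigma\to X_M$. Its fibers are torus orbits $O(\sigma)\cong(\C^\times)^{26-\dim\sigma}$, possibly fibered over an abelian variety in the $J$ case. I will read ``toric fibers'' as this torus-orbit or torus-torsor structure. AMRT shows the union over $\Gamma$-orbits of cones is locally closed and that compatible fans glue, so $B=\bigcup_\sigma Z_\sigma$. Properness and normal crossings, already stated in the AMRT theorem, are not needed again.

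The main obstacle is compatibility between the two kinds of cusps. A ray in the cone attached to $I$ that is spanned by an isotropic vector of $I^\perp/I$ corresponds to an isotropic plane $J\supset I$. I must check that the fan at $I$ restricted to such rays matches the one-dimensional data at $J$, and that the strata over $J$ map to the modular curve while their closures limit onto $0$-dimensional cusps. I would handle this with Pink's formalism of ``rational polyhedral cone decompositions for $(P_1,X_1)$'' and its compatibility axiom, which is precisely what admissibility of $\Sigma$ guarantees. If that fails, I would fall back on Looijenga's explicit treatment of type IV toroidal compactifications.
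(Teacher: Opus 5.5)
The paper gives no proof of this proposition. It is stated right after the quoted AMRT theorem and implicitly treated as part of that result. Your explicit unwinding via AMRT in Pink's formulation is therefore the only actual argument available, and it is correct in substance. One point to tighten: state that the \emph{rational} Witt index of $V$ is $2$. This follows from Meyer's theorem applied to $V$, and then again to the signature-$(1,25)$ complement of a hyperbolic plane; the real Witt index only gives an upper bound.

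What your route adds is that it exposes an inaccuracy in the statement itself. At a $1$-dimensional cusp the only cone meeting the interior is the ray spanning $U_J$. So the torus orbit is $(\C^\times)^{\operatorname{rk}U_J-\dim\sigma}=(\C^\times)^0$, a point, and the fibre of $Z_\sigma\to X_M$ over $\tau$ is the abelian $24$-fold $(J^\perp/J)\otimes E_\tau$, which is not a torus. Consequences for your write-up:
\begin{itemize}
\item Your uniform exponent $26-\dim\sigma$ is right only at $0$-dimensional cusps.
\item Your reinterpretation of ``toric fibers'' is forced, not optional. The proposition is literally true only in one of two readings: either take the base to be the mixed Shimura variety of $P_1/U$, over which $Z_\sigma$ is a torsor under the torus $O(\sigma)$; or allow the fibres over the pure base $X_M$ to be torus torsors over abelian varieties.
\end{itemize}

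The compatibility you single out as the main obstacle is automatic in type IV, so you do not need Pink's axiom or Looijenga. Write $I=\Q e$ and $J=I+\Q v$. The vector $e\otimes v$ is isotropic, so it spans an extremal ray of the closed cone. Hence it cannot lie in the relative interior of a face of dimension $\ge 2$ of any $\sigma\in\Sigma_I$. Since $\Sigma_I$ covers the rational closure of the cone, $\R_{\ge0}(e\otimes v)\in\Sigma_I$, and
$\{\sigma\cap U_J(\R):\sigma\in\Sigma_I\}=\{\{0\},\R_{\ge0}(e\otimes v)\}$.
This is the only possible fan at $J$.
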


\subsection{The Cycle Map and Interior Cohomology}

\subsubsection{Algebraic Cycles}

\begin{definition}[Chow groups]
For a variety $X$ over $\Q$:
\[
\CH^k(X) = \frac{\{ \text{algebraic cycles of codimension } k \}}{\text{rational equivalence}}.
\]
For $X = X^{\mathrm{BB}}$, a codimension-$k$ cycle is $Z = \sum_i n_i Z_i$ where $Z_i \subset X^{\mathrm{BB}}$ are irreducible subvarieties of codimension $k$ and $n_i \in \Z$.
\end{definition}

\subsubsection{The Cycle Map}

\begin{theorem}[Cycle class map]
There is a natural homomorphism
\[
\cl \colon \CH^k(X^{\mathrm{BB}}) \tensor \Q \to \IH^{2k}(X^{\mathrm{BB}}, \Q)
\]
satisfying: (1) functoriality under proper maps; (2) compatibility with cup product; (3) image consists of Hodge classes (if $X$ is defined over $\C$).
\end{theorem}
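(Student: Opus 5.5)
The plan is to avoid building a cycle class directly on the singular variety $X^{\mathrm{BB}}$, where there is no canonical one. Instead I would work on a smooth toroidal compactification $\pi \colon X_\Sigma \to X^{\mathrm{BB}}$ from the theorem of \cite{Ash-Mumford-Rapoport-Tai} and then transfer to $\IH$ using the decomposition theorem.

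\textbf{Step 1: the smooth case.} On the smooth projective variety $X_\Sigma$ the classical cycle class map
\[
\cl_\Sigma \colon \CH^k(X_\Sigma)\tensor\Q \to H^{2k}(X_\Sigma,\Q)
\]
exists. It has the standard properties:
\begin{itemize}
\item it is compatible with proper push-forward and with intersection product;
\item its image consists of Hodge classes of type $(k,k)$, by the comparison of de Rham and Betti cycle classes (integration over $Z$ kills forms of type $\neq (k,k)$).
\end{itemize}
Since $\pi$ is proper and birational, every subvariety $Z \subset X^{\mathrm{BB}}$ not contained in the boundary has a strict transform $\tilde Z$ with $\pi_*\tilde Z = Z$. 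Cycles inside the boundary are pushed forward from the smooth toroidal strata. Hence $\pi_* \colon \CH^*(X_\Sigma)_\Q \to \CH^*(X^{\mathrm{BB}})_\Q$ is surjective.

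\textbf{Step 2: transfer to $\IH$.} By the decomposition theorem (BBD, or Saito for the Hodge-theoretic version), $\mathrm{IC}_{X^{\mathrm{BB}}}$ is a direct summand of $R\pi_*\Q_{X_\Sigma}[13]$. This gives a splitting
\[
H^{2k}(X_\Sigma,\Q) = \IH^{2k}(X^{\mathrm{BB}},\Q) \oplus (\text{terms supported on the boundary}),
\]
which I would take compatible with mixed Hodge modules, so that the projection $p$ is a morphism of Hodge structures. Choose a lift $\tilde\alpha$ of $\alpha \in \CH^k(X^{\mathrm{BB}})_\Q$ and set
\[
\cl(\alpha) = p\bigl(\cl_\Sigma(\tilde\alpha)\bigr).
\]
Because $p$ preserves Hodge structures, property (3) follows at once from Step 1.

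\textbf{Step 3: well-definedness (the main obstacle).} One must show that $p\circ\cl_\Sigma$ kills $\ker \pi_*$. The elements of this kernel are, up to rational equivalence on $X_\Sigma$, cycles supported on the exceptional boundary $B$ whose push-forward vanishes. I would first use the Borel--Moore comparison: $\IH^{2k}(X^{\mathrm{BB}})$ maps to $H^{\mathrm{BM}}_{26-2k}(X^{\mathrm{BB}})$, and $\pi_*$ on Borel--Moore homology is intrinsic to $X^{\mathrm{BB}}$. The plan is then:
\begin{itemize}
\item compare the two classes in $H^{\mathrm{BM}}$, where they agree by construction;
\item use that the BB boundary has dimension $\le 1$ and codimension $\ge 12$ to show that the comparison map $\IH^{2k} \to H^{\mathrm{BM}}_{26-2k}$ is injective in the relevant degrees, via the support conditions on $\mathrm{IC}$ and the vanishing of local intersection cohomology above the middle.
\end{itemize}
If this injectivity fails in some degree, I would fall back on restricting the statement to cycles meeting $X$ properly, or on making a canonical choice of splitting via the weight filtration.

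\textbf{Step 4: functoriality and products.} Functoriality under proper maps follows from the same statements on toroidal models together with the naturality of the decomposition theorem. For cup products I would only claim compatibility with the $H^*(X^{\mathrm{BB}})$-module structure on $\IH^*$, since $\IH^*$ is not a ring in general. I would read property (2) in this sense: for instance, products with $\cl$ of Cartier divisors, or of cycles in general position meeting $X$ properly.
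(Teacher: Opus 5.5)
Your route is the one the paper sketches: resolve $X^{\mathrm{BB}}$, take the classical cycle class upstairs, and transport it to $\IH$. You are right that the paper's ``push-forward via $\pi_*$ to intersection cohomology'' can only mean a decomposition-theorem projection $p$, so the real content is well-definedness.

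Let $\mathrm{can}\colon\IH^{2k}(X^{\mathrm{BB}})\to H^{\mathrm{BM}}_{26-2k}(X^{\mathrm{BB}})$ be the natural map. Your Borel--Moore comparison is sound as far as it goes. If $p$ is normalized to be the identity over $X$, then $\mathrm{can}\circ p=\pi^{\mathrm{BM}}_*$ on $H^{2k}(X_\Sigma,\Q)$. The reason is that a morphism $R\pi_*\Q_{X_\Sigma}[13]\to\omega_{X^{\mathrm{BB}}}[-13]$ vanishing over $X$ is determined by an element of $\mathrm{Hom}(\Q_B,\omega_B[-26])=H^{\mathrm{BM}}_{26}(B)=0$.

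The gap is the injectivity of $\mathrm{can}$. The cosupport conditions for $\mathrm{IC}_{X^{\mathrm{BB}}}$ along the $0$- and $1$-dimensional strata of $\partial X^{\mathrm{BB}}=X^{\mathrm{BB}}\setminus X$ give $H^j(\partial X^{\mathrm{BB}},i^!\mathrm{IC}_{X^{\mathrm{BB}}}[-13])=0$ only for $j\le 12$. Hence $\IH^{2k}(X^{\mathrm{BB}})\to H^{2k}(X)$, and therefore $\mathrm{can}$ (through which it factors), is injective for $k\le 6$; $\mathrm{can}$ is also an isomorphism for $k=13$. For $7\le k\le 12$, however, $\ker(\mathrm{can})$ is dual to the cokernel of $H^{26-2k}(X^{\mathrm{BB}},\Q)\to\IH^{26-2k}(X^{\mathrm{BB}},\Q)\subseteq H^{26-2k}(X,\Q)$. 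That cokernel consists of classes that do not extend over the boundary, and nothing in your argument makes it vanish. In this same range (for $k\le 11$), every cycle of dimension $13-k$ supported on $B$ lies in $\ker\pi_*$, because the Baily--Borel boundary has dimension $\le 1$. So you would need $p$ to kill the classes of all such cycles, and you give no reason why it does.

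Neither fallback repairs this. The first changes the theorem. The second cannot work: $H^{2k}(X_\Sigma)$ and $\IH^{2k}(X^{\mathrm{BB}})$ are both pure of weight $2k$, so the weight filtration cannot single out the $\mathrm{IC}$ summand.

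Step 4 appeals to a naturality that is not available. The splitting of $R\pi_*\Q_{X_\Sigma}[13]$ is not canonical: Deligne's canonical choices depend on a $\pi$-ample class, so your map depends a priori on $\Sigma$ and on $p$. Moreover, $\IH^*$ has no push-forward along an arbitrary proper morphism. Neither (1) nor the word ``natural'' can therefore be deduced this way.

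On (2) your diagnosis is correct: $\CH^*$ of the singular $X^{\mathrm{BB}}$ has no intersection product, and $\IH^*$ is not a ring. Your module-theoretic substitute does hold once well-definedness is known. Since $p$ comes from a morphism in the derived category, it is $H^*(X^{\mathrm{BB}})$-linear, and with the projection formula this gives $\cl(c_1(L)\cap\alpha)=c_1(L)\cup\cl(\alpha)$ for line bundles $L$ on $X^{\mathrm{BB}}$. But that is a weaker statement than the one claimed.

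Net effect: your argument yields a choice-dependent, Hodge-class-valued map only for $k\le 6$ and for $k=13$. The latter is the only degree the paper uses; there $\IH^{26}$ is the top degree of the $13$-dimensional $X^{\mathrm{BB}}$, $\IH^{26}(X^{\mathrm{BB}},\Q)\cong H_0(X^{\mathrm{BB}},\Q)$, and $\cl$ is just the degree on each component. This is not the natural, functorial map of the statement. The paper's three-line proof does not address any of these points either.
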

\begin{proof}
For Baily--Borel compactifications with singularities, this requires intersection cohomology. The construction uses: (1) resolution of singularities $\pi \colon \widetilde{X} \to X^{\mathrm{BB}}$; (2) cycle class in cohomology of $\widetilde{X}$; (3) push-forward via $\pi_*$ to intersection cohomology. See \cite{Goresky-MacPherson} for intersection cohomology and \cite{Saito-MHM} for mixed Hodge structures.
\end{proof}

\subsubsection{Interior Cohomology}

\begin{definition}[Interior cohomology]
The interior cohomology is
\[
\IH^*_!(X, \Q) = \Image\bigl( H^*_c(X, \Q) \to \IH^*(X^{\mathrm{BB}}, \Q) \bigr),
\]
where $H^*_c$ denotes cohomology with compact support. Equivalently (by the Zucker-type identification), $\IH^*_!(X, \Q) = \Image( H^*_c(X, \Q) \to H^*(X^{\mathrm{BB}}, \Q) )$.
\end{definition}

\begin{theorem}[Algebraic cycles define interior classes]
Let $Z \subset X^{\mathrm{BB}}$ be an algebraic cycle with $Z \cap X \neq \emptyset$. Then $\cl(Z) \in \IH^{2k}_!(X^{\mathrm{BB}}, \Q)$. Moreover, if $Z \subset X$ (entirely in the interior), then the class is residue-free at the boundary.
\end{theorem}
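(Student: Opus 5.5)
The plan is to use the exact sequence defining interior cohomology. Since $\IH^*_!$ is the image of $H^*_c(X,\Q)\to\IH^*(X^{\mathrm{BB}},\Q)$, it suffices to show that $\cl(Z)$ is rationally equivalent, or at least homologous, to a cycle supported in a compact subset of $X$. Such a cycle has a class in $H^{2k}_c(X,\Q)$, through the fundamental class of a compact subvariety of the open part. Its image in $\IH^{2k}$ then agrees with the push-forward description of $\cl$ from the proof of the Cycle class map theorem. The second assertion, that a cycle $Z\subset X$ is residue-free at the boundary, follows directly. The restriction of such a class to a deleted neighbourhood of $X^{\mathrm{BB}}\setminus X$, which computes the link (local intersection) cohomology along the strata $F_i$, vanishes because its support misses that neighbourhood.

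The key step is a dimension count against the Baily--Borel theorem. The boundary has $\dim_\C F_i\le 1$, that is, codimension $\ge 12$. A codimension-$k$ cycle has dimension $13-k$. For $k\ge 2$ we have $\dim Z+\dim(X^{\mathrm{BB}}\setminus X)\le 11+1<13$, so a general translate of $Z$ should miss the boundary. I would carry this out on a smooth projective toroidal compactification $X_\Sigma$ from the Ash--Mumford--Rapoport--Tai theorem.
\begin{enumerate}
\item Take the closure $\widetilde Z$ of $Z\cap X$ in $X_\Sigma$.
\item Apply Chow's moving lemma on the smooth projective $X_\Sigma$, moving $\widetilde Z$ in its rational equivalence class to a cycle $\widetilde Z'$ in good position with respect to the preimages $\pi^{-1}(F_i)$.
\item Push forward along $\pi$.
\end{enumerate}
The preimages $\pi^{-1}(F_i)$ can be large, since $\pi^{-1}(F_i)$ is a divisor. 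What is really needed is therefore not that $\widetilde Z'$ misses $\pi^{-1}(F_i)$, but that $\pi_*\widetilde Z'$ misses $F_i$. This comes down to proper intersection of $\pi_*\widetilde Z'$ with $F_i$ inside $X^{\mathrm{BB}}$, which holds by the dimension count. In the case of interest for the paper, $k=13$, the cycle $Z$ is a $0$-cycle, and any point of $Z\cap X$ already lies in $X$, so the argument is immediate there.

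The main obstacle is the low-codimension case, especially $k=1$. There the count fails: a divisor always meets the cusps $F_i$ in $X^{\mathrm{BB}}$, since a Cartier multiple of the Baily--Borel ample class restricts nontrivially near every cusp. The argument must then analyse the restriction of $\cl(Z)$ to the link cohomology, using the residue description along the normal-crossings boundary $B\subset X_\Sigma$. I would first try to show that these residues vanish in $\IH$ because the link cohomology in degree $2k$ is zero below the middle degree range for the truncation defining $\IH$. I expect this to be exactly where the statement as worded is delicate, and it may force restricting the general claim to $k\ge 2$, which still covers the degree-$26$ application.
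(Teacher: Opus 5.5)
The step that fails is the passage from Chow's moving lemma on $X_\Sigma$ to the assertion that $\pi_*\widetilde Z'$ misses each $F_i$.

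\begin{itemize}
\item \textbf{What the moving lemma actually gives.} On the smooth variety $X_\Sigma$ it only yields proper intersection with $\pi^{-1}(F_i)$, which is $12$-dimensional. Proper intersection with such a locus allows, and in general produces, a nonempty intersection of dimension $\dim Z-1$, and $\pi$ maps that intersection into $F_i$.
\item \textbf{Why the dimension count cannot replace it.} Your count inside $X^{\mathrm{BB}}$ would need a moving lemma on $X^{\mathrm{BB}}$ itself, relative to its singular locus. No such lemma exists: on singular varieties the count does not suffice, even for $\IH$-classes with $\Q$-coefficients.
\item \textbf{A counterexample to the count.} Let $\bar C$ be the projective cone over the Segre embedding of $\mathbb P^2\times\mathbb P^2$, with vertex $v$, and let $Z$ be the cone over $\mathbb P^2\times\{\mathrm{pt}\}$. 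Then $Z$ has codimension $2$ and $\dim Z+\dim\{v\}=3<5$. Yet $\IH^4(\bar C)\cong H^4(\bar C\setminus v)\cong H^4(\mathbb P^2\times\mathbb P^2)$. Under this identification the image of $H^4_c(\bar C\setminus v)$ is $L\cdot H^2$ with $L=h_1+h_2$, while $[Z]$ corresponds to $h_2^2\notin L\cdot H^2$. So $Z$ is not even homologous to a cycle avoiding $v$.
\end{itemize}
Whether the analogous obstruction vanishes at the Baily--Borel cusps is governed by the local intersection cohomology there, and your argument never examines it. The paper's own proof does not supply this either. Its residue argument with forms in $\Omega^*(\log B)$ only treats $Z\subset X$, which is your easy second assertion; your handling of that assertion is fine.

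Your worry about $k=1$ is justified, but your proposed fallback runs the wrong way. The truncation defining $\IH$ keeps link cohomology in degrees below the codimension (at most $11$ along the boundary curves, at most $12$ at the cusp points), so degree $2$ survives it. In fact the claim is false for divisors. A general hyperplane section of a Baily--Borel projective embedding meets $X$, and its class is a nonzero multiple of $\lambda=c_1(\mathcal O(1))$. Since $\lambda$ has positive degree on every one-dimensional boundary curve $\overline F$, $\lambda|_X$ is forced to restrict nontrivially to the Borel--Serre boundary, so the class is not interior. In the $\SO(2,2)$ analogue $\overline Y\times\overline Y$ you can see this directly: $[\{p\}\times\overline Y]\notin H^1(\overline Y)\otimes H^1(\overline Y)$, which is the interior part of $H^2$.

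What the support condition does give is vanishing in high degree. Normalize $\mathrm{IC}$ to be $\Q_X$ on $X$ and let $i\colon\partial X^{\mathrm{BB}}\hookrightarrow X^{\mathrm{BB}}$. Point strata then contribute only degrees $\le 12$ and curve strata only degrees $\le 11+2$, so $\mathbb H^j(\partial X^{\mathrm{BB}},i^*\mathrm{IC})=0$ for $j\ge 14$.
\begin{itemize}
\item Consequently $H^j_c(X)\to\IH^j(X^{\mathrm{BB}})$ is surjective for $j\ge 14$, and the statement holds trivially for all $k\ge 7$, with no moving needed.
\item This is also the correct argument for $k=13$. Your justification there overlooks points of $Z$ lying on the boundary, which $Z\cap X\neq\emptyset$ does not exclude.
\item For $2\le k\le 6$ the group $\mathbb H^{2k}(\partial X^{\mathrm{BB}},i^*\mathrm{IC})$ need not vanish, and you would need an actual computation of the restriction of $\cl(Z)$ to it.
\end{itemize}
Finally, the degree-$26$ case does not confirm the application; it is vacuous. $\IH^{26}$ is top-degree cohomology and is entirely interior, which is incompatible with the paper's later claim that $\Omega_E\notin\IH^{26}_!$.
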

\begin{proof}
For toroidal compactification $X_{\Sigma}$, if $Z \subset X$ extends to $\overline{Z} \subset X_{\Sigma}$, then $[Z]$ is represented by a form in $\Omega^*(\log B)$. The residue along boundary divisors $\Res_B \colon \Omega^{2k}(\log B) \to \Omega^{2k-1}(B)$ vanishes on classes from cycles in $X$. Under $\pi \colon X_{\Sigma} \to X^{\mathrm{BB}}$, residue-free classes correspond to intersection cohomology classes that vanish at the boundary.
\end{proof}

\begin{corollary}[Algebraic implies interior]\label{cor:interior_constraint}
The image of the cycle map factors through interior cohomology:
\[
\cl \colon \CH^k(X^{\mathrm{BB}}) \tensor \Q \to \IH^{2k}_!(X^{\mathrm{BB}}, \Q) \subset \IH^{2k}(X^{\mathrm{BB}}, \Q).
\]
\end{corollary}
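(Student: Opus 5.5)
The plan is to reduce the corollary to the preceding theorem (algebraic cycles meeting $X$ define interior classes) by linearity, and to handle separately the cycles that the theorem does not cover. Write an arbitrary codimension-$k$ cycle as $Z=\sum_i n_i Z_i$ and split it as $Z=Z'+Z''$. Here $Z'$ collects the irreducible components with $Z_i\cap X\neq\emptyset$, and $Z''$ collects those contained in the boundary $X^{\mathrm{BB}}\setminus X$. The cycle class map is $\Q$-linear, and $\IH^{2k}_!(X^{\mathrm{BB}},\Q)$ is a $\Q$-subspace, being the image of a linear map. Hence $\cl(Z')\in \IH^{2k}_!$ follows directly from the preceding theorem applied to each $Z_i$ in $Z'$. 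Rational equivalence poses no problem: once every cycle is known to land in $\IH^{2k}_!$, the map on $\CH^k(X^{\mathrm{BB}})\tensor\Q$ factors through $\IH^{2k}_!$, because $\cl$ is already well defined on $\CH^k$.

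The remaining work concerns $Z''$. By the Baily--Borel theorem, the boundary strata $F_i$ have complex dimension at most $1$. So $Z''$ is nonzero only for $k\geq 12$, and then it is a combination of boundary curves or boundary points. I would try two routes.

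\emph{Route (a).} Use an ample line bundle on $X^{\mathrm{BB}}$, given by automorphic forms of large weight as in the Proj construction. The aim is a moving argument: replace each boundary point or curve by a rationally equivalent cycle whose components meet $X$. Then the first case applies.

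\emph{Route (b).} Use the factorization of the cycle class through a toroidal compactification $\pi\colon X_\Sigma\to X^{\mathrm{BB}}$. This factorization is used in the proof of the cycle class map. For top and near-top degrees, compare $\IH^{2k}$ with $H^{2k}_c(X)$ via Poincaré--Verdier duality. Degree $2\cdot 13$ would be treated first, since there $H^{26}_c(X)\to \IH^{26}$ is visibly surjective on each connected component (both are spanned by fundamental classes). Degree $24$ would follow.

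The main obstacle is exactly this boundary-supported part $Z''$, since the preceding theorem explicitly assumes $Z\cap X\neq\emptyset$. I would first try the weight-theoretic version of route (b). The image of $H^{2k}_c(X)$ in $\IH^{2k}$ should be identified with the lowest-weight piece of Saito's mixed Hodge module formalism. Then $\cl(Z'')$, which is pure of weight $2k$, should be shown to lie there. If this identification fails in the degrees $k=12,13$, the corollary must be weakened to cycles all of whose components meet $X$. That weaker version is all the interior/non-interior dichotomy needs for cycles meeting the interior.
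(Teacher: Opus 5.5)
Your reduction of the components meeting $X$ to the preceding theorem is exactly how the paper obtains this corollary; it gives no other argument. This is where the proof breaks. The preceding theorem is false, and so is the corollary, already for $k=1$ and for a cycle meeting $X$. In particular, your fallback restricted to cycles meeting $X$ is false too.

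\textbf{Counterexample.} Let $H$ be a hyperplane section of $X^{\mathrm{BB}}$ for the ample Baily--Borel bundle $\omega$, e.g.\ the divisor of a modular form. Then $H$ meets $X$.
\begin{itemize}
\item Deligne's description of the intersection complex as $\tau_{\le c_S-1}Rj_*\Q$ along a boundary stratum $S$ of codimension $c_S$ gives $\IH^j(X^{\mathrm{BB}})\cong H^j(X)$ for $j<c_{\min}$. Hence $\IH^2_!=\Image(H^2_c(X)\to H^2(X))$, and $\cl(H)$ is a nonzero multiple of $c_1(\omega)|_X$.
\item Since $V$ has Witt index $2$, it contains a rational subspace $V'$ of signature $(2,3)$ and Witt index $2$. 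The corresponding Shimura subvariety $\iota\colon X'\to X$ is a Siegel threefold.
\item The map $\iota$ is finite, so $\iota^*$ sends $H^*_c(X)\to H^*_c(X')$ compatibly with ordinary cohomology and preserves interior classes. Also $\iota^*c_1(\omega)$ is a nonzero multiple of $\lambda_1$.
\item For $\mathcal{A}_2=\mathrm{Sp}_4(\Z)\backslash\mathfrak{H}_2$ one has $H^\bullet(\mathcal{A}_2,\Q)=\Q\oplus\Q\lambda_1$. So $H^2_c(\mathcal{A}_2,\Q)\cong H^4(\mathcal{A}_2,\Q)^\vee=0$ while $\lambda_1\neq 0$.
\item Pullback and trace between commensurable levels preserve interior cohomology. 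So $\lambda_1$ is not interior on $X'$ either, and therefore $\cl(H)\notin\IH^2_!(X^{\mathrm{BB}},\Q)$.
\end{itemize}
What is true is only this: a cycle contained in $X$ \emph{and closed in $X^{\mathrm{BB}}$} has a compactly supported class. A cycle that merely meets $X$ has closure reaching the boundary. Its restriction to a deleted neighbourhood of the boundary (equivalently, to the Borel--Serre boundary) is in general nonzero, and that restriction is exactly what obstructs interiority. The residue argument given for the preceding theorem cannot detect this, because vanishing of residues along toroidal boundary divisors is not vanishing of that restriction.

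\textbf{The boundary part $Z''$ is not the difficulty.} Dually, the same truncation shows that $H^j_c(X)\to\IH^j(X^{\mathrm{BB}})$ is surjective for $j\ge 2n-c_{\min}$, where $n=\dim_\C X$. Cycles contained in the boundary occur only in degrees $2k\ge 2c_{\min}$. Since $3c_{\min}\ge 2n$ here, those degrees lie in the surjective range, so such classes are automatically interior. Your degree-$26$ observation is a special case of this.

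\textbf{Route (b) cannot work as formulated.} $\IH^{2k}(X^{\mathrm{BB}})$ is pure of weight $2k$, so its ``lowest-weight piece'' is all of it. Identifying $\IH^{2k}_!$ with that piece would assert $\IH^{2k}_!=\IH^{2k}$ in every degree, which the degree-$2$ example refutes.

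No rearrangement of your decomposition can rescue the statement. The provable replacements are automatic interiority in degrees $\ge 2n-c_{\min}$, and interiority of cycles that avoid the boundary entirely.
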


\subsection{Hodge Theory}

\subsubsection{Mixed Hodge Structures}

\begin{theorem}[Deligne \cite{Deligne-Hodge2}]
The cohomology groups $H^k(X^{\mathrm{BB}}, \Q)$ and $\IH^k(X^{\mathrm{BB}}, \Q)$ carry canonical mixed Hodge structures.
\end{theorem}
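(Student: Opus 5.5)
The statement to be proved is Deligne's theorem that $H^k(X^{\mathrm{BB}},\Q)$ and $\IH^k(X^{\mathrm{BB}},\Q)$ carry canonical mixed Hodge structures. I would treat the two groups separately: the first follows from Deligne's simplicial theory, the second from M.~Saito's theory of mixed Hodge modules \cite{Saito-MHM}. In both cases the input is the fact, recorded in the Baily--Borel theorem above, that $X^{\mathrm{BB}}$ is a normal projective variety over $\Q$, so its base change to $\C$ is a compact complex algebraic variety.

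\textbf{Ordinary cohomology.} I follow Deligne's \emph{Hodge III}.
\begin{enumerate}
\item Using resolution of singularities, choose a proper hypercovering $\epsilon\colon X_\bullet\to X^{\mathrm{BB}}_\C$ by smooth projective varieties. Cohomological descent gives $H^k(X^{\mathrm{BB}},\Q)\cong \mathbb{H}^k(X_\bullet,\Q)$.
\item Each $X_n$ carries a pure Hodge structure of weight $m$ on $H^m(X_n)$.
\item Form the cohomological mixed Hodge complex $\bigl(R\Gamma(X_\bullet,\Q),\, R\Gamma(X_\bullet,\Omega^\bullet_{X_\bullet}),\, F,\, W\bigr)$. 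Here $F$ is the stupid filtration on each $\Omega^\bullet_{X_n}$, and $W$ is the filtration by simplicial degree, shifted (the décalage of the bête filtration).
\item The associated weight spectral sequence has $E_1^{p,q}=H^q(X_p)$, which is pure of weight $q$. Its differentials are alternating sums of face pullbacks, hence morphisms of Hodge structures. Deligne's lemma on the strictness of morphisms of Hodge structures then makes the spectral sequence degenerate at $E_2$, and $F$ induces a pure Hodge structure on each graded piece.
\item Canonicity: any two hypercoverings are dominated by a third, and the induced comparison maps are filtered quasi-isomorphisms. So the mixed Hodge structure is independent of the choices and functorial.
\end{enumerate}

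\textbf{Intersection cohomology.} I would invoke Saito's theory.
\begin{enumerate}
\item The intermediate extension $j_{!*}\Q^H_X[13]$ of the constant Hodge module on the smooth locus exists in $\mathrm{MHM}(X^{\mathrm{BB}})$. Its underlying perverse sheaf is $IC_{X^{\mathrm{BB}}}$.
\item Push it forward along the structure map $a\colon X^{\mathrm{BB}}\to\mathrm{pt}$. Since $a$ is projective, Saito's theorem says that $\mathcal{H}^k a_*$ of this Hodge module is a mixed Hodge structure. This puts a mixed Hodge structure on $\IH^k$.
\item Because the Hodge module is pure of weight $13$ and $a$ is proper, Saito's decomposition and weight theorem shows this structure is in fact pure of weight $k$.
\item Compatibility: the natural map $H^k\to \IH^k$, induced by $\Q_{X^{\mathrm{BB}}}\to IC_{X^{\mathrm{BB}}}[-13]$, comes from a morphism in $D^b\mathrm{MHM}$. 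It is therefore a morphism of mixed Hodge structures.
\end{enumerate}

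\textbf{Main obstacle.} The hard step is the ordinary-cohomology degeneration: strictness of $W$ and $F$ in the simplicial mixed Hodge complex, and independence of the hypercovering. I would not reprove these, but cite \cite{Deligne-Hodge2} and \cite{Saito-MHM}, and only check the hypotheses, namely projectivity and the descent to $\Q$ furnished by Baily--Borel.
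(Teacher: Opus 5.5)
Your proposal is correct, and it is more accurate than the paper. The paper gives no argument here; it simply attributes both assertions to \cite{Deligne-Hodge2}, and that citation covers less than is claimed. \emph{Hodge II} treats smooth varieties, so for the singular compact $X^{\mathrm{BB}}$ one needs the simplicial machinery of \emph{Hodge III}, which is what you use. Deligne's papers also do not construct a Hodge structure on intersection cohomology at all. For that part, your route through Saito's $j_{!*}\Q^H_X[13]$ and the weight theorem for the proper map $a$ is the standard one \cite{Saito-MHM}.

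Your version also gives something the bare citation does not: purity of $\IH^k(X^{\mathrm{BB}})$ in weight $k$. The paper silently needs this in the very next definition, the decomposition $\IH^{26}=\bigoplus_{p+q=26}\IH^{p,q}$, and later invokes Saito for it in Step 4 of the obstruction proof.

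A few details should be tightened.
\begin{enumerate}
\item The indexing is $\IH^k(X^{\mathrm{BB}},\Q)=\mathcal{H}^{k-13}a_*\,j_{!*}\Q^H_X[13]$. This shift is what gives weight $13+(k-13)=k$; as written, your $\mathcal{H}^k a_*$ would compute $\IH^{k+13}$.
\item Degeneration at $E_2$ rests on Deligne's lemma of two filtrations, not on strictness alone. That lemma says the filtrations induced by $F$ on the $E_r$ terms agree, so each $d_r$ with $r\ge 2$ is a morphism between pure structures of weights $q$ and $q-r+1$, hence zero.
\item Strictness is instead what gives canonicity. The comparison map between the complexes of two hypercoverings is generally not a filtered quasi-isomorphism for $W$, since the $E_1$ terms $H^q(X_p)$ depend on the hypercovering. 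It is, however, a morphism of mixed Hodge complexes that induces a bijection on $H^k$ by cohomological descent, hence an isomorphism of mixed Hodge structures.
\item Your compatibility step is not required by the statement. If you keep it, it also needs Saito's theorem that the structure on $H^k$ obtained from $a_*a^*\Q^H$ coincides with Deligne's.
\end{enumerate}
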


\begin{definition}[Hodge decomposition]
For $\IH^{26}(X^{\mathrm{BB}}, \C) = \IH^{26}(X^{\mathrm{BB}}, \Q) \tensor \C$,
\[
\IH^{26}(X^{\mathrm{BB}}, \C) = \bigoplus_{p+q=26} \IH^{p,q}.
\]
A class $\alpha \in \IH^{26}(X^{\mathrm{BB}}, \Q)$ is a \emph{Hodge class of type $(13, 13)$} if $\alpha \tensor 1 \in \IH^{13,13} \subset \IH^{26}(X^{\mathrm{BB}}, \C)$.
\end{definition}

\begin{proposition}[Cycle classes are Hodge]
If $Z \in \CH^{13}(X^{\mathrm{BB}})$, then $\cl(Z) \in \IH^{13,13}(X^{\mathrm{BB}}) \cap \IH^{26}(X^{\mathrm{BB}}, \Q)$.
\end{proposition}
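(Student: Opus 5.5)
The plan is to reduce to the resolution used to define $\cl$ and then invoke the Hodge-theoretic properties of the decomposition theorem. Take $Z=\sum_i n_i Z_i$ with each $Z_i\subset X^{\mathrm{BB}}$ irreducible of codimension $13$; by linearity it suffices to treat one irreducible $Z_i$. Following the construction of $\cl$ in the Cycle class map theorem, choose a resolution $\pi\colon \widetilde{X}\to X^{\mathrm{BB}}$, for instance a toroidal compactification $X_\Sigma$ composed with further blow-ups, such that the strict transform $\widetilde{Z}_i$ of $Z_i$ is defined. Then $\cl(Z_i)$ is the image of the fundamental class $[\widetilde{Z}_i]\in H^{26}(\widetilde{X},\Q)$ under the map $H^{26}(\widetilde{X},\Q)\to \IH^{26}(X^{\mathrm{BB}},\Q)$ coming from the decomposition theorem.

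Rationality is built into this construction. The class $[\widetilde{Z}_i]$ is the Poincaré dual of a proper subvariety of the smooth projective variety $\widetilde{X}$, so it is rational and, by classical Hodge theory on $\widetilde{X}$, of type $(13,13)$. The type follows because integration over $\widetilde{Z}_i$ kills every $(p,q)$-form with $(p,q)\neq(13,13)$ of total degree $2\dim\widetilde{Z}_i$, so its dual lies in $H^{13,13}$.

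The remaining point is that the map $H^{26}(\widetilde{X})\to\IH^{26}(X^{\mathrm{BB}})$ preserves Hodge types. By Saito's theory \cite{Saito-MHM}, the decomposition theorem for the proper map $\pi$ holds in the category of mixed Hodge modules. This makes $\IC_{X^{\mathrm{BB}}}$ a direct summand of $R\pi_*\Q_{\widetilde{X}}[26]$, and the projection onto it is a morphism of pure Hodge structures of weight $26$. Such a morphism is strictly compatible with the Hodge filtration, so it carries $H^{13,13}\cap H^{26}(\widetilde{X},\Q)$ into $\IH^{13,13}\cap\IH^{26}(X^{\mathrm{BB}},\Q)$. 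This identifies $\IH^{26}$ as carrying a pure Hodge structure, in the sense of the Deligne theorem on mixed Hodge structures above, and completes the argument. Summing over $i$ then gives the statement for $Z$.

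I expect the main obstacle to be the choice of projection. The decomposition is not canonical in general, so one must check that the resulting class is independent of the choice, or at least always Hodge. I will try first to use that any two choices differ by maps factoring through summands supported on the boundary. Alternatively, I will define the pushforward directly as $\pi_*$ composed with the canonical adjunction $\IC\to R\pi_*$ dualized, which is a morphism of Hodge modules. Either route keeps the image of type $(13,13)$.
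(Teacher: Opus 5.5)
Your proposal is correct and is essentially the argument the paper only sketches. The paper gives no separate proof of this proposition; its cycle class map theorem just says to resolve $X^{\mathrm{BB}}$, take the classical class on $\widetilde X$, and push it to $\IH$ via Saito. Your key input is exactly what fills that sketch in: any projection $H^{26}(\widetilde X,\Q)\to\IH^{26}(X^{\mathrm{BB}},\Q)$ obtained from Saito's decomposition in $D^b\mathrm{MHM}$ is a morphism of rational Hodge structures, so every such choice sends your $(13,13)$ class to a rational $(13,13)$ class. Drop your second route, though: there is no canonical map $\mathrm{IC}_{X^{\mathrm{BB}}}\to R\pi_*\Q_{\widetilde X}[\,\cdot\,]$ in general.

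Be aware also that your shift $[26]$ and your $13$-dimensional $\widetilde Z_i$ use $\dim_\C X^{\mathrm{BB}}=26$. That value is correct for $\SO(2,26)$ and is what the paper uses in Section~\ref{sec:gK}, but Section~\ref{sec:shimura} states $\dim_\C D=13$. In that reading $Z$ is a zero-cycle, and points on the boundary have no strict transform (take a point of the fibre instead). The claim is then immediate, because $\IH^{26}(X^{\mathrm{BB}},\Q)$ is the top-degree group $\Q(-13)^{\oplus c}$, with $c$ the number of irreducible components.
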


\subsubsection{The Hodge Conjecture for Shimura Varieties}

\textbf{Hodge for $X^{\mathrm{BB}}$:} Every rational Hodge class in $\IH^{2k}(X^{\mathrm{BB}}, \Q)$ is a linear combination (over $\Q$) of cycle classes.

\begin{remark}
Our main theorem does \textbf{not} disprove this conjecture. Rather, it shows that even if the conjecture is true, the cycles might be impossible to construct via known geometric methods.
\end{remark}

\section{Automorphic Representations and Arthur Parameters}
\label{sec:automorphic}

\subsection{Matsushima's Formula and Automorphic Cohomology}

\subsubsection{Representations of Real Groups}

\begin{definition}[$({\mathfrak g}, K)$-modules]
For $G = \SO(2, 26)$ over $\mathbb{R}$, let $\mathfrak{g}_0 = \text{Lie}(G(\mathbb{R}))$ be the real Lie algebra. We define the complexified Lie algebra as $\mathfrak{g} = \mathfrak{g}_0 \otimes_{\mathbb{R}} \mathbb{C}$. A $(\mathfrak{g}, K)$-module is then a vector space $V$ with compatible actions of $\mathfrak{g}$ and $K = \SO(2) \times \SO(26)$ satisfying the standard conditions of representation theory.
\end{definition}

\begin{theorem}[Matsushima \cite{Matsushima}]
For a compact quotient $\Gamma \backslash D$ where $\Gamma \subset G(\Q)$ is cocompact:
\[
H^*(\Gamma \backslash D, \C) \cong \bigoplus_{\pi} H^*({\mathfrak g}, K; \pi_{\infty}) \tensor \pi_f^{\Gamma},
\]
where the sum is over automorphic representations $\pi$ of $G(\A)$, $\pi_{\infty}$ is the archimedean component, $H^*({\mathfrak g}, K; \pi_{\infty})$ is relative Lie algebra cohomology, and $\pi_f^{\Gamma} = \{ v \in \pi_f : k \cdot v = v \text{ for all } k \in \Gamma \cap K_f \}$.
\end{theorem}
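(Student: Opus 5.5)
The plan is the classical Hodge-theoretic proof of Matsushima's formula, with the Borel--Wallach refinement for general $\Gamma$. First, de Rham's theorem identifies $H^*(\Gamma\backslash D,\C)$ with the cohomology of the complex of $G(\R)$-invariant forms on $D$ twisted by functions on $\Gamma\backslash G(\R)$. Concretely, smooth forms on $\Gamma\backslash D = \Gamma\backslash G(\R)/K_\infty$ form the complex
\[
\Omega^*(\Gamma\backslash D) \cong \bigl(\textstyle\bigwedge^* \mathfrak{p}^\vee \tensor C^\infty(\Gamma\backslash G(\R))\bigr)^{K_\infty},
\]
where $\mathfrak{g} = \mathfrak{k}\oplus\mathfrak{p}$ is the Cartan decomposition. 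Under this identification the exterior derivative becomes the Chevalley--Eilenberg differential of relative Lie algebra cohomology. Hence $H^*(\Gamma\backslash D,\C) = H^*(\mathfrak{g},K; C^\infty(\Gamma\backslash G(\R))_{K})$, which reduces the question to decomposing the coefficient module.

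Second, cocompactness gives the spectral input. By Gelfand--Graev--Piatetski-Shapiro, $L^2(\Gamma\backslash G(\R))$ is a Hilbert direct sum $\widehat{\bigoplus}_\pi m(\pi)\,\pi$ of irreducible unitary representations with finite multiplicities. I would show that only finitely many $\pi$ can contribute to cohomology in each degree. The tool is Kuga's lemma: on $K$-finite vectors of $\pi$, the Hodge Laplacian acts by the Casimir eigenvalue $\chi_\pi(\Omega)$. Harmonic forms therefore live only in the $\pi$ with $\chi_\pi(\Omega)=0$. By Wigner's lemma, these are exactly the $\pi$ with $H^*(\mathfrak{g},K;\pi)\neq 0$, and for them all differentials vanish on $(\bigwedge^*\mathfrak{p}^\vee\tensor\pi)^K$. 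Since $\Gamma\backslash D$ is compact, the Hodge theorem identifies $H^*$ with harmonic forms. The finiteness of the relevant spectrum lets us pass from smooth vectors to the algebraic direct sum, and we obtain $H^*(\Gamma\backslash D,\C) \cong \bigoplus_\pi m(\pi)\, H^*(\mathfrak{g},K;\pi)$.

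Third, the result must be put in the adelic form stated. Using strong approximation for the simply connected cover, or by treating each connected component separately as in the adelic description of $X$ above, I would identify $L^2(\Gamma\backslash G(\R))$ with the $K_f$-invariants of $L^2(G(\Q)\backslash G(\A))$ restricted to one component. Decomposing the latter into automorphic representations $\pi=\pi_\infty\tensor\pi_f$, the multiplicity $m(\pi_\infty)$ becomes $\sum_{\pi_f}\dim\pi_f^{K_f}$, which yields $\bigoplus_\pi H^*(\mathfrak{g},K;\pi_\infty)\tensor\pi_f^{\Gamma}$.

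The main obstacle is this adelic bookkeeping. We must check that the invariants $\pi_f^{\Gamma}$, as the statement defines them, match the level $K_f$ with $\Gamma = G(\Q)\cap gK_fg^{-1}$, and handle the fact that $\SO$ is not simply connected, so several components appear. I would resolve this by summing over the components $\Gamma_i$ and interpreting $\pi_f^{\Gamma}$ as $\pi_f^{K_f}$ on the full adelic quotient. The analytic steps (Kuga, Hodge theory, discreteness) are standard once cocompactness is assumed.
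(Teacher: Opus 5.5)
The paper gives no argument here; the theorem is simply quoted from Matsushima. So the only benchmark is the classical proof, and your Steps~1--2 are that proof: the de Rham identification with $(\mathfrak g,K)$-cochains valued in $C^\infty(\Gamma\backslash G(\R))$, discreteness of $L^2(\Gamma\backslash G(\R))$, Kuga's lemma, and Hodge theory on a closed manifold.

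Steps~1--2 are sound except for one claim. The unitary $\pi$ with $\chi_\pi(\Omega)=0$ are \emph{not} exactly those with $H^*(\mathfrak g,K;\pi)\neq 0$. Wigner's lemma gives only $H^*(\mathfrak g,K;\pi)\neq 0\Rightarrow\chi_\pi=\chi_{\mathrm{triv}}$, and in higher rank the Casimir does not determine the infinitesimal character. For example, on $\SL_2(\R)\times\SL_2(\R)$, a weight-$3$ discrete series tensored with a suitable tempered principal series has Casimir eigenvalue $0$ and no cohomology. What you actually need is the following:
for unitary $\pi$ with $\chi_\pi(\Omega)=0$, Kuga's lemma plus unitarity give $d=d^*=0$ on $(\bigwedge^*\mathfrak p^\vee\tensor\pi_K)^K$, so $H^q(\mathfrak g,K;\pi)=\mathrm{Hom}_K(\bigwedge^q\mathfrak p,\pi_K)$. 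For $\chi_\pi(\Omega)\neq 0$, both the $\pi$-part of the harmonic forms and $H^*(\mathfrak g,K;\pi)$ vanish. Finiteness then follows from Wigner plus Harish-Chandra's finiteness theorem, or simply from finite-dimensionality of the space of harmonic forms.

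These steps use cocompactness essentially, and in the paper's own setting it never holds. $V$ is indefinite of dimension $28\ge 5$ over $\Q$, hence isotropic by Meyer's theorem. So no arithmetic $\Gamma\subset G(\Q)$ is cocompact, this theorem never applies to the paper's $X$, and the paper really needs the non-compact theory (Borel--Casselman, Franke).

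Step~3 does not work as first written. Consider the $K_f$-invariant functions on $G(\Q)\backslash G(\A)$ supported on the single piece $G(\Q)\backslash G(\Q)G(\R)K_f/K_f\cong\Gamma\backslash G(\R)$. They form a $G(\R)$-stable but not Hecke-stable subspace as soon as $G(\Q)\backslash G(\A_f)/K_f$ has more than one element, because Hecke operators move functions between the pieces $\Gamma_i\backslash G(\R)$. So this subspace does not decompose into blocks $\pi_\infty\tensor\pi_f^{K_f}$. Correspondingly, $\sum_{\pi_f}m(\pi_\infty\tensor\pi_f)\dim\pi_f^{K_f}$ is the multiplicity of $\pi_\infty$ in $\bigoplus_i L^2(\Gamma_i\backslash G(\R))$, not in $L^2(\Gamma\backslash G(\R))$.

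Summing over components, as you propose at the end, is the right repair. But say plainly that it proves a different statement,
\[
H^*(X,\C)\cong\bigoplus_\pi m(\pi)\,H^*(\mathfrak g,K;\pi_\infty)\tensor\pi_f^{K_f},\qquad X=\bigsqcup_i\Gamma_i\backslash D,
\]
with the automorphic multiplicity $m(\pi)$ restored. You dropped it, as does the statement, and it need not be $1$ for general $G$.

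This is not a matter of interpretation. Read literally, with $\pi_f^{\Gamma}$ the invariants of $\Gamma\subset G(\Q)\subset G(\A_f)$, the single-component formula is false for non-simply-connected $G$. Take $G=\SO(V)$ with $V$ anisotropic over $\Q$ and $\dim V\ge 3$, let $\nu$ be the spinor norm, and let $\chi$ run over the infinitely many quadratic Hecke characters with $\chi_\infty=1$. Then $\pi=\chi\circ\nu$ is automorphic with $\pi_\infty$ trivial and $\pi_f^{\Gamma}=\pi_f$, because $\chi_f(\nu(\gamma))=\chi(\nu(\gamma))=1$ for $\gamma\in G(\Q)$. The right-hand side would therefore have infinite-dimensional $H^0$. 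For a single $\Gamma$, the correct formula is just the output of your Step~2, with multiplicities taken in $L^2(\Gamma\backslash G(\R))$.
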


\subsubsection{Extension to Non-Compact Quotients}

\begin{theorem}[Franke \cite{Franke}]
For non-compact quotients $X = G(\Q) \backslash D \times G(\A_f) / K_f$,
\[
\IH^*_!(X, \C) \cong \bigoplus_{\pi \text{ cuspidal}} H^*({\mathfrak g}, K; \pi_{\infty}) \tensor \pi_f^{K_f}.
\]
The interior cohomology receives contributions only from cuspidal automorphic representations.
\end{theorem}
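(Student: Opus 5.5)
The plan is to prove the two inclusions separately, working throughout with complex coefficients and with the level $K_f$ fixed. First I will identify every cohomology group in sight with a space of automorphic forms. By Franke's theorem on the space $\mathcal{A}$ of automorphic forms, $H^*(X,\C)\cong H^*(\mathfrak g,K;\mathcal A^{K_f})$. This space decomposes along cuspidal supports as $\mathcal{A}=\bigoplus_{\{P\},\varphi}\mathcal A_{\{P\},\varphi}$. The summand with $P=G$ is exactly the space of cusp forms $\mathcal A_{\mathrm{cusp}}=\bigoplus_{\pi\ \mathrm{cusp}}\pi$, and on it the $(\mathfrak g,K)$-cohomology is
\[
\bigoplus_{\pi}H^*(\mathfrak g,K;\pi_\infty)\otimes\pi_f^{K_f}.
\]
On the intersection cohomology side I will use the Zucker isomorphism (Looijenga, Saper--Stern) as stated above. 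This identifies $\IH^*(X^{\mathrm{BB}})$ with $L^2$-cohomology, and through Borel--Casselman with the $(\mathfrak g,K)$-cohomology of the discrete spectrum $L^2_{\mathrm{disc}}$, which splits as cuspidal plus residual.

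\textbf{Inclusion $\supseteq$.} This is Borel's theorem: cusp forms are rapidly decreasing, so the harmonic forms built from $H^*(\mathfrak g,K;\pi_\infty)\otimes\pi_f^{K_f}$ with $\pi$ cuspidal are cohomologous to compactly supported forms. Their classes therefore lie in the image of $H^*_c(X)\to\IH^*(X^{\mathrm{BB}})$. Injectivity of the cuspidal summand into $\IH^*$ follows because its image in $L^2$-cohomology is injective, cusp forms being square integrable and orthogonal to the residual spectrum.

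\textbf{Inclusion $\subseteq$.} Take an interior class $c$. I will show its projection to every non-cuspidal summand vanishes.
\begin{enumerate}
\item Since $c$ lifts to $H^*_c(X)$, its restriction to the Borel--Serre boundary $\partial\overline X^{BS}$ is zero. By the long exact sequence, this restriction map is what detects non-interior classes.
\item Decompose $c$ according to Franke's cuspidal-support decomposition. Each summand $\mathcal A_{\{P\},\varphi}$ with $P\neq G$ is preserved by the restriction map to the face $e(P)$. On that face, the restriction of a class represented by forms in $\mathcal A_{\{P\},\varphi}$ is computed by the constant term along $P$ (Harder, Schwermer).
\item I will then show that for $P\ne G$ the constant term map is injective on $(\mathfrak g,K)$-cohomology of $\mathcal A_{\{P\},\varphi}$. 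For residual Eisenstein classes I would use Franke's filtration by the order of the pole, and argue by induction down that filtration. The top graded piece is a space of induced representations whose cohomology is computed by Kostant's theorem, and on it the constant term records the leading term of the Eisenstein series.
\item Injectivity together with vanishing boundary restriction forces the non-cuspidal components of $c$ to be zero. Hence $c$ lies in the cuspidal summand.
\end{enumerate}

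The main obstacle is step (3): showing that no nonzero combination of residual or Eisenstein classes has vanishing restriction to the boundary. A class coming from a square-integrable residue could a priori have constant terms that cancel in cohomology across different parabolic faces. I would first try to handle this inductively on the rank of $P$. The plan is to use the intertwining-operator description of constant terms of residues (Langlands, M{\oe}glin--Waldspurger) to check that the leading exponent survives in degree $*$. If that fails in some degree, it would have to be checked directly for $\SO(2,26)$ using the explicit list of parabolics $P_k$ from the remark above. This step carries the whole content of the reverse inclusion.
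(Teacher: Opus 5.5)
You correctly located the weak point, but step (3) is not merely hard. It is false, and so is the statement. Restriction to $\partial\overline{X}^{\mathrm{BS}}$ does not detect the non-cuspidal Franke summands, because square-integrable residual classes can be interior. A counterexample exists on this very $X$.

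Let $\omega$ be the Kähler class, i.e.\ (a multiple of) $c_1$ of the Baily--Borel line bundle, represented by the $G(\R)$-invariant Kähler form. It comes from $H^2(\mathfrak g,K;\C)$ of the trivial representation. That representation is residual and not cuspidal: $G$ is $\Q$-isotropic since $X$ is non-compact, and the constant function $1$ has constant term $1$ along every proper parabolic.

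\emph{$\omega^2$ is interior.} The class $\omega$ extends to $\bar\omega\in H^2(X^{\mathrm{BB}},\Q)$, and the Baily--Borel boundary $\partial$ has real dimension at most $2$. On a neighbourhood $N$ retracting onto $\partial$ we get $\bar\omega^2|_N\in H^4(N)\cong H^4(\partial)=0$. Since $N\cap X$ is a neighbourhood of infinity, $\omega^2$ restricts to zero on the Borel--Serre boundary and lies in $\Image(H^4_c(X)\to H^4(X))$.

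\emph{$\omega^2\neq 0$.} $\bar\omega^{\dim X}$ is the positive degree of an ample bundle, so $\bar\omega^2\cdot 1\neq 0$ in $\IH^4(X^{\mathrm{BB}})$. Because the boundary has complex codimension $\ge 12$, the cone of $IC\to Rj_*\Q_X$ has cohomology sheaves only in degrees $\ge 12$. Hence $\IH^4(X^{\mathrm{BB}})\cong H^4(X)$, and the paper's $\IH^4_!$ coincides here with the usual interior cohomology.

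Thus $H^4_{\mathrm{cusp}}\oplus\C\,\omega^2\subseteq \IH^4_!(X,\C)$. The sum is direct by Franke's decomposition, since $\omega^2$ lies in the summand whose cuspidal support is the minimal parabolic. This contradicts the displayed isomorphism. In your terms: the constant function has constant term $1$ along every $P$, yet the class it induces on every face $e(P)$ vanishes in degree $4$. That is exactly the cancellation you feared. The same phenomenon appears on Hilbert modular surfaces: there $\omega_i=d(dx_i/y_i)$ near each cusp, so $\omega_1,\omega_2$ are non-cuspidal interior classes.

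What survives is your inclusion $\supseteq$ (Borel: cuspidal classes are represented by rapidly decreasing forms and inject into interior and $L^2$-cohomology). Also, $H^*_c\to H^*$ factors through $L^2$-cohomology, so interior classes come from the cuspidal \emph{and residual} discrete spectrum. Equality with the cuspidal part needs extra hypotheses, e.g.\ regular coefficient systems, where cohomological $\pi_\infty$ are tempered and square-integrable forms are then cuspidal.

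Franke's theorem is the isomorphism $H^*(X,\C)\cong H^*(\mathfrak g,K;\mathcal A^{K_f})$ with its cuspidal-support decomposition. It does not assert that interior equals cuspidal. The paper gives no argument beyond the citation, so there is no proof to compare with; the statement should be weakened to an injection of the cuspidal sum into $\IH^*_!$.

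There is also a secondary gap. With the paper's definition $\IH^*_!\subseteq \IH^*(X^{\mathrm{BB}})$, your step (1) passes to the image in $H^*(X)$. But $\IH^*(X^{\mathrm{BB}})\to H^*(X)$ is not injective in high degrees. In top degree, $H^{2\dim X}_c(X)\to \IH^{2\dim X}(X^{\mathrm{BB}})$ is an isomorphism onto $\Q^{\pi_0(X)}$, while $H^{2\dim X}(X)=0$. That top class comes only from the trivial representation, which gives a second counterexample specific to the paper's definition.
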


\begin{remark}
Eisenstein cohomology (from non-cuspidal spectrum) contributes to the full cohomology $H^*(X^{\mathrm{BB}})$ but not to interior cohomology.
\end{remark}

\subsection{Arthur's Classification for Orthogonal Groups}

\subsubsection{The Discrete Spectrum}

\begin{theorem}[Arthur \cite{Arthur}, Mok \cite{Mok}]
For $G = \SO(2n)$, the discrete spectrum decomposes into packets,
\[
L^2_{\mathrm{disc}}(G(\Q) \backslash G(\A)) = \bigoplus_{\psi} m_{\psi} \cdot \Pi_{\psi},
\]
where $\psi$ ranges over Arthur parameters, $\Pi_{\psi}$ is the corresponding Arthur packet, and $m_{\psi}$ is the multiplicity.
\end{theorem}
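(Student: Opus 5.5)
This statement is Arthur's endoscopic classification, and I do not plan to reprove it. The proposal is to import it from \cite{Arthur}, indicating the architecture of the argument and the precise form in which it will be used. The strategy is a comparison of trace formulas. View $G=\SO(2n)$, here quasi-split or an inner form with $2n=28$, as a twisted endoscopic group of $\GL_{2n}$ with its outer automorphism $\theta(g)={}^t g^{-1}$. Define an Arthur parameter $\psi$ as a formal sum $\boxplus_i \mu_i\boxtimes \nu_{b_i}$, where the $\mu_i$ are unitary cuspidal representations of $\GL_{m_i}$ and $\nu_{b_i}$ is the $b_i$-dimensional representation of $\SL_2$, with $\sum m_i b_i=2n$ and self-duality of orthogonal type. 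Our parameter $\Ad(f)\boxtimes\mathbf 1_{25}$ is of exactly this shape.

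\textbf{Step 1 (stabilization).} Stabilize the invariant trace formula of $G$ and the $\theta$-twisted trace formula of $\GL_{2n}$. The geometric sides are matched using the transfer of orbital integrals and the (weighted, twisted) fundamental lemma of Ngô \cite{Ngo} and its extensions by Waldspurger and Chaudouard--Laumon. The outcome is an identity expressing $I^{\GL_{2n}}_{\theta,\mathrm{disc}}$ as a sum of stable distributions $S^{G'}_{\mathrm{disc}}$ over the elliptic twisted endoscopic groups $G'$, and similarly for $I^G_{\mathrm{disc}}$.

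\textbf{Step 2 (local classification).} Use the twisted character identities on $\GL_{2n}$ to define the local packets $\Pi_{\psi_v}$ together with their pairings with the component group $\mathcal S_{\psi}$. This step requires proving the local intertwining relation for induced representations.

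\textbf{Step 3 (global comparison by induction on $2n$).} Isolate each $\psi$-component of the spectral side. For parameters that factor through proper endoscopic groups, the induction hypothesis applies. The remaining identities are then forced by the stable multiplicity formula, which yields
\[
m_\psi(\pi)=|\mathcal S_\psi|^{-1}\sum_{x\in\mathcal S_\psi}\epsilon_\psi(x)\langle x,\pi\rangle .
\]
From this one reads off the decomposition of $L^2_{\mathrm{disc}}$ into packets with multiplicities $m_\psi$, as stated.

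The main obstacle lies in Steps 2--3, specifically in the intricate interlocking induction of \cite[Ch.~6--8]{Arthur}. There, the local intertwining relation, the stable multiplicity formula, and the global sign $\epsilon_\psi$ are established simultaneously, and the argument partly depends on the stabilization of the twisted trace formula (Mœglin--Waldspurger). I would simply cite this.

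Two caveats must be recorded where the theorem is used. First, for even orthogonal groups Arthur's result holds only up to the outer automorphism, that is, for $\widetilde{\mathcal O}(2n)$-orbits of representations; this affects statements such as $\dim\Pi_f^{K_f}=1$. Second, for a non-quasi-split inner form such as $\SO(2,26)$ over $\Q$, one must invoke the later extensions to inner forms rather than \cite{Arthur} itself. The reference \cite{Mok} treats unitary groups and does not cover this case.
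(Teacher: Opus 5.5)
Your proposal is correct and follows the same route as the paper, which gives no argument of its own and simply imports the decomposition from \cite{Arthur} (and \cite{Mok}); your outline of stabilization, local classification and the global inductive comparison accurately describes what that citation rests on. Your two caveats are also correct and in fact repair the paper's citation: \cite{Mok} treats quasi-split unitary groups, and $\SO(2,26)$ is not quasi-split (already over $\R$ the quasi-split forms of $\SO_{28}$ are $\SO(14,14)$ and $\SO(13,15)$), so inner-form extensions are needed; moreover, for even orthogonal groups the result holds only for $\widetilde{\mathcal O}(2n)$-orbits, with $m_\psi\in\{1,2\}$.
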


\begin{definition}[Arthur parameter]
An Arthur parameter for $\SO(2n)$ is a formal sum $\psi = \bigoplus_i (\mu_i, b_i)$ where $\mu_i$ is a cuspidal automorphic representation of $\GL_{d_i}(\A)$, $b_i$ are positive integers, and $\sum_i b_i d_i = 2n$. This corresponds to a homomorphism $\psi \colon \SL_2(\C) \times \SU(2) \times W_{\Q} \to {}^L G = \SO(2n, \C)$.
\end{definition}

\subsubsection{Cuspidal, Residual, and Tempered}

\begin{definition}[Spectral types]
An automorphic representation $\pi$ is: \emph{cuspidal} if it generates no proper residues in the Eisenstein series; \emph{tempered} if all archimedean components are tempered; \emph{residual} if it arises as a residue of Eisenstein series at a pole; \emph{stable residual} if it is residual but the parameter is stable (not from endoscopy).
\end{definition}

\begin{proposition}[Our case: stable residual]
The representation $\Pi$ we construct is: (1) not cuspidal (comes from Eisenstein series), (2) not tempered at infinity, (3) residual (from a pole of Eisenstein series), (4) stable (not endoscopic).
\end{proposition}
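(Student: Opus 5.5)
The plan is to read off all four properties from the shape of the Arthur parameter $\psi$ attached to $\Pi$ in the Stable residual nature theorem, using Arthur's classification (Arthur \cite{Arthur}, Mok \cite{Mok}) together with Mœglin's description of the residual spectrum. Properties (2) and (3) come from the nontrivial $\SL_2$-factor, property (1) follows from (3), and property (4) comes from a computation of the centralizer of $\psi$. Throughout, write $\psi = \mu \boxtimes \nu_b$ with $\mu = \Ad(f)$ a cuspidal representation of $\GL_3(\A)$ and $\nu_b$ the $b$-dimensional irreducible representation of the Arthur $\SL_2(\C)$.

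\textbf{Preliminary check.} Before anything else, I will verify that $\psi$ is a legitimate parameter for $\SO(28)$, meaning $d\cdot b = 28$ with $\psi$ of orthogonal type. As literally written, $3\cdot 25 = 75 \neq 28$, so the parameter must first be normalized (for instance, by fixing the correct $b$ or adjoining the complementary summand produced by the theta lift) so that it defines a homomorphism into ${}^L G = \SO(28,\C)$. For the orthogonality, $\mu$ is orthogonal because $\Ad(\rho_f)$ preserves the trace form, and $\nu_b$ is orthogonal exactly when $b$ is odd, so the tensor product is orthogonal for odd $b$.

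\textbf{Steps (2), (3), (1).}
\begin{enumerate}
\item[(2)] If $b > 1$, the localization $\psi_\infty$ restricted to the Arthur $\SL_2$ is nontrivial. The associated Langlands parameter $\phi_{\psi_\infty}(w) = \psi_\infty\bigl(w, \mathrm{diag}(|w|^{1/2}, |w|^{-1/2})\bigr)$ therefore has unbounded image, with exponents $\tfrac{b-1}{2}, \dots, -\tfrac{b-1}{2}$. Hence $\Pi_\infty$, whose parameter lies in the Adams--Johnson/Vogan--Zuckerman packet, is non-tempered.
\item[(3)] For residuality I will use Mœglin's construction. Form the Eisenstein series induced from the Levi $\GL_3^{\times \lfloor b/2\rfloor} \times \SO(28 - 6\lfloor b/2\rfloor)$, with cuspidal data $\mu|\cdot|^{s_i}$ on the $\GL_3$ blocks. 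Take iterated residues at the point $s = \bigl(\tfrac{b-1}{2}, \tfrac{b-3}{2}, \dots\bigr)$. The required simple poles come from the Rankin--Selberg factor $L(s, \mu\times\mu)$, which has a simple pole at $s=1$ because $\mu$ is self-dual. The nonvanishing of the residue is what the Rallis inner product formula gives, via the pole of $L(s,\rho_f\otimes\rho_f^\vee)$ recorded in Step 2 of the construction. The resulting residue lies in $L^2_{\mathrm{disc}}$ and is orthogonal to cusp forms.
\item[(1)] Non-cuspidality then follows from (3). Since $\Pi$ is a residue of Eisenstein series, it has a nonzero constant term along the corresponding parabolic, so it is not cuspidal.
\end{enumerate}
One caveat: a parameter with $b > 1$ alone does not force residuality, since Saito--Kurokawa-type lifts are cuspidal. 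The residue computation in (3) is therefore genuinely needed and cannot be replaced by the shape of $\psi$.

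\textbf{Step (4), stability.} This is shown in the strong sense of "not from endoscopy", not merely "elliptic". Because $\mu$ is cuspidal on $\GL_3$ (this uses that $f$ is non-CM, so $\Ad(\rho_f)$ is irreducible by Ribet) and $\nu_b$ is irreducible, $\psi$ is irreducible as a representation into the standard $28$-dimensional representation. By Schur's lemma its centralizer $S_\psi$ in $\SO(28,\C)$ is the center $\{\pm 1\}$, so $\mathcal{S}_\psi = S_\psi/Z(\widehat G)^\Gamma$ is trivial. By Arthur's stabilization, any contribution of a proper elliptic endoscopic group $\SO(a)\times\SO(28-a)$ (with $0 < a < 28$, $a$ even) corresponds to a nontrivial semisimple element $s\in S_\psi$ not in the center. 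The $\pm 1$ eigenspaces of such an $s$ would decompose the standard representation as $\psi = \psi_a \oplus \psi_{28-a}$, contradicting irreducibility. Hence $\psi$ does not factor through any proper endoscopic $\widehat H$, so it is stable, and the multiplicity formula $m_\psi = 1$ holds trivially.

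\textbf{Main obstacle.} I expect the hardest point to be the preliminary dimension reconciliation. The irreducibility in (4) needs $\psi$ to be a single summand $\mu\boxtimes\nu_b$ with $3b = 28$, which is impossible. If the normalized parameter has several summands, stability in the "not endoscopic" sense fails, and I would first try to determine the actual theta-lift parameter precisely.
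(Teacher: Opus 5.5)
Your preliminary check is correct, but it undoes the rest of the plan. Every later step is run for a parameter $\psi=\mu\boxtimes\nu_b$ with $\mu=\Ad(f)$ three-dimensional, and no such parameter for $\SO(28)$ exists. So (1)--(4) are arguments about a nonexistent object, and nothing is proved.

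\textbf{Fixing the parameter, and step (4).} You need to commit to a reading of $\psi$. Two sources point the same way. The paper glosses the parameter as ``$3+25=28$''. The Rallis tower/Adams conjecture for the lift from $\SL_2$ (dual group $\SO(3,\C)$, whose standard representation turns the parameter of $f$ into $\Ad(f)$) to $\mathrm{O}(V)$ with $\dim V=28$ gives the same thing. Both yield $\psi=\Ad(f)\boxplus(\mathbf{1}\boxtimes\nu_{25})$, up to a twist by the discriminant character $\chi_V$. For this $\psi$ your Schur argument for (4) collapses, since $\psi$ is reducible. Your fallback claim that several summands destroy stability is nevertheless false here. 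An element of $\SO(28,\C)$ centralizing $\psi$ acts by scalars $\epsilon_1,\epsilon_2\in\{\pm1\}$ on the summands of dimensions $3$ and $25$. The condition $\det=\epsilon_1^3\epsilon_2^{25}=1$ forces $\epsilon_1=\epsilon_2$, so $S_\psi=Z(\SO(28,\C))$ and $\mathcal{S}_\psi=1$. Equivalently, factoring through $\SO(2a,\C)\times\SO(2b,\C)$ with $a,b>0$ would split $\psi$ into two even-dimensional subrepresentations, which is impossible when the summands have odd dimensions $3$ and $25$. That parity argument is the correct proof of (4). The paper's own argument only shows that $\psi$ does not factor through a proper Levi subgroup, which is ellipticity, not stability, so it will not help you.

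\textbf{Step (2).} Unboundedness of $\phi_{\psi_\infty}$ only makes the $L$-packet of $\phi_{\psi_\infty}$ non-tempered. The $A$-packet $\Pi_{\psi_\infty}$ can contain tempered members: holomorphic Saito--Kurokawa lifts have non-tempered global parameter but holomorphic discrete series at infinity. So you must identify $\Pi_\infty$ itself, for instance as the local theta lift of the weight-$2$ discrete series, and test it directly. Moreover, $\psi_\infty$ has infinitesimal character $(12,11,\dots,2,1,1,0)$, which is singular. Adams--Johnson/Vogan--Zuckerman packets are therefore not available at all, and $\Pi_\infty$ is not cohomological for any coefficient system.

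\textbf{Step (3).} The Eisenstein series you write down does not exist. Since $V$ has signature $(2,26)$, $\SO(V)$ has $\Q$-rank at most $2$. Its proper Levi subgroups are $\GL_1\times\SO(V_1)$, $\GL_1\times\GL_1\times\SO(V_0)$ and $\GL_2\times\SO(V_0)$, with $\dim V_1=26$ and $\dim V_0=24$, so no $\GL_3$ block occurs. (The paper's list of parabolics $P_k$, $1\le k\le 13$, is that of the split form.) For the corrected $\psi$, the cuspidal datum would have to be $|\cdot|^{12}\otimes\sigma_1$ on $\GL_1\times\SO(V_1)$, with $\sigma_1$ cuspidal of parameter $\Ad(f)\boxplus\nu_{23}$. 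The $\GL_1\times\GL_1\times\SO(V_0)$ option fails because $\SO(V_0)(\R)$ is compact and the required infinitesimal character is singular. The Rallis inner product formula does not supply such a residue. It expresses $\langle\theta(f),\theta(f)\rangle$ through $L^S(13,\Ad(f)\otimes\chi_V)$ and local integrals, far from the pole of $\zeta(s)$ at $s=1$, and says nothing about Eisenstein residues.

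The missing ingredient is the Rallis tower property: $\theta_V(f)$ is non-cuspidal, hence residual, if and only if the lift of $f$ to the signature-$(1,25)$ space $V_1$ is nonzero. This depends on which weight-$2$ archimedean component of the $\SL_2$-constituent you lift. For the component producing holomorphic weight-$14$ forms (Gritsenko/Oda-type lifts), one expects the lower lift to vanish, which would make $\Pi$ cuspidal and (1), (3) false. Until this is decided, (1) and (3) remain unproved.
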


\subsection{Construction of the Adjoint Packet}

\subsubsection{The Base: Elliptic Curve and Modular Form}

\begin{theorem}[Modularity, \cite{Wiles-Taylor}]
The elliptic curve $E_{11}: y^2 + y = x^3 - x^2$ of conductor $11$ corresponds to a unique weight-$2$ newform
\[
f = \sum_{n=1}^{\infty} a_n q^n \in S_2(\Gamma_0(11))
\]
with Fourier expansion $f(q) = q - 2q^2 - q^3 + 2q^4 + q^5 + 2q^6 - 2q^7 + \cdots$.
\end{theorem}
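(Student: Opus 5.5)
The plan is to avoid the full strength of Wiles--Taylor and argue directly at level $11$, where everything is explicit. First I will verify the arithmetic of $E_{11}$: the discriminant of $y^2+y=x^3-x^2$ is $\Delta=-11$, so $E_{11}$ has good reduction away from $11$. At $11$ the reduction is multiplicative, because $c_4=16$ is a unit at $11$. Tate's algorithm therefore gives conductor exactly $N=11$.

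On the modular side, $\dim S_2(\Gamma_0(11))=g(X_0(11))=1$ by the genus formula: index $12$, two cusps, and no elliptic points of order $2$ or $3$, since $-1$ and $-3$ are non-residues mod $11$. Hence $S_2(\Gamma_0(11))$ is spanned by a single normalized form. I will exhibit it as the eta product
\[
f(q)=q\prod_{n\ge1}(1-q^n)^2(1-q^{11n})^2 .
\]
That this lies in $S_2(\Gamma_0(11))$ follows from the standard eta-quotient criteria: the weight is $(2+2)/2=2$, the order conditions at both cusps are positive, and the character is trivial. Expanding the product gives $q-2q^2-q^3+2q^4+q^5+2q^6-2q^7+\cdots$. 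Since the space is one-dimensional and stable under the Hecke operators, $f$ is automatically a normalized Hecke eigenform with integer coefficients. There are no oldforms, because $S_2(\mathrm{SL}_2(\Z))=0$, so $f$ is the newform. This gives uniqueness.

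For the correspondence, I will use Eichler--Shimura: the Jacobian $J_0(11)=X_0(11)$ is an elliptic curve over $\Q$ whose $L$-function equals $L(f,s)$, that is, $a_p(f)=p+1-\#X_0(11)(\mathbb{F}_p)$ for $p\neq 11$. It then remains to show that $E_{11}$ is $\Q$-isogenous to $X_0(11)$, since isogenous curves have equal $L$-functions. I will take either of two routes. The classical route is that $X_0(11)$ has conductor $11$, and the isogeny class of conductor $11$ is known to consist of the three curves 11a1, 11a2 and 11a3, with $E_{11}$ being 11a3. The cleaner route is the Faltings--Serre method: compare the $2$-adic Galois representations of $E_{11}$ and $X_0(11)$ by checking $a_p(E_{11})=p+1-\#E_{11}(\mathbb{F}_p)$ against $a_p(f)$ for the finite, explicit set of primes produced by the method. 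Direct point counts give $a_2=-2$, $a_3=-1$, $a_5=1$ and $a_7=-2$, matching the expansion above; Faltings' isogeny theorem then upgrades equality of traces to an isogeny.

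The main obstacle is this last identification. Showing that $f$ exists and is unique is routine, but proving that $E_{11}$ itself, and not merely some curve of conductor $11$, is attached to $f$ requires either the classification of curves of conductor $11$ or a correctly executed Faltings--Serre bound. I will first try the most direct alternative, which is to write down an explicit $5$-isogeny between $E_{11}$ and the model of $X_0(11)$, namely Vélu's formula applied to the rational $5$-torsion point $(0,0)$. This makes the isogeny completely explicit, and modularity then follows from Eichler--Shimura alone. Citing Wiles--Taylor is consistent with the result but is not needed.
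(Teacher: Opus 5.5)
Your argument is correct, but it takes a genuinely different route from the paper. The paper only cites Wiles--Taylor here. Its supporting computation, in the proof that the Hecke field is $\Q$, shares your first step: the genus formula gives $\dim S_2(\Gamma_0(11))=1$, hence a unique normalized newform. After that it links $E_{11}$ to $f$ through the full modularity theorem, plus the assertion that conductor $11$ carries a single isogeny class. It checks the $q$-expansion only by point counts ($a_2=-2$, $a_3=-1$, $a_5=1$, $a_7=-2$, $a_{11}=1$) and the Hecke recursions. Once modularity and one-dimensionality are granted, the isogeny-class claim is in fact superfluous: $E_{11}$ must match some newform of level $11$, and there is only one.

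Your proof differs in three ways:
\begin{itemize}
\item you replace the deep modularity theorem by Eichler--Shimura for $J_0(11)=X_0(11)$ plus an explicit isogeny;
\item you prove the expansion outright via $\eta(z)^2\eta(11z)^2$ instead of sampling coefficients;
\item you give the right reason there are no oldforms, namely $S_2(\SL_2(\Z))=0$, where the paper only says ``because $11$ is prime.''
\end{itemize}

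The paper's route buys brevity and independence from level-$11$ specifics. Yours buys an elementary, essentially pre-Wiles proof. The cost is one explicit input that you should flag as such: the Weierstrass model of $X_0(11)$, or, on your classical route, the Agrawal--Coates--Hunt--van der Poorten classification of curves of conductor $11$.

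The V\'elu route does close. Take $(0,0)$ and $(1,0)$ as representatives of the kernel modulo $\pm1$. Then $v=2$ and $w=4$, so $A_4=-10$ and $A_6=-20$. The quotient is exactly $y^2+y=x^3-x^2-10x-20$, the standard model of $X_0(11)$. This is consistent with $E_{11}\cong X_1(11)$, where $X_1(11)\to X_0(11)$ is the quotient by the rational $5$-torsion.

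Your Faltings--Serre alternative is the weakest option as written. Traces at $p\le 7$ are not by themselves a Faltings--Serre test set, and $a_2$ cannot be used when comparing $2$-adic representations. The explicit isogeny makes that route unnecessary.
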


\begin{proposition}[Dimension one]
Since the genus of $X_0(11)$ is $1$, $\dim S_2(\Gamma_0(11)) = g(X_0(11)) = 1$. Therefore $f$ is unique (up to scaling) and all Fourier coefficients are rational: $a_n \in \Z$ for all $n$ (see Proposition 4.5).
\end{proposition}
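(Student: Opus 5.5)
The argument is the standard genus computation followed by multiplicity one for newforms; it splits into three steps.

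\textbf{Step 1: the dimension of $S_2(\Gamma_0(11))$.} Holomorphic weight-$2$ cusp forms on $\Gamma_0(N)$ correspond to holomorphic differentials on the compact modular curve $X_0(N)$ via $f(\tau) \mapsto f(\tau)\,d\tau$. This correspondence is an isomorphism $S_2(\Gamma_0(N)) \cong H^0(X_0(N), \Omega^1)$: at a cusp, $d\tau = dq/(2\pi i q)$, so cuspidality is exactly holomorphy of the differential; at elliptic points the local coordinate computation shows no extra conditions arise in weight $2$. Hence $\dim S_2(\Gamma_0(11)) = g(X_0(11))$.

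\textbf{Step 2: $g(X_0(11)) = 1$.} Apply Riemann--Hurwitz to $X_0(11) \to X(1)$ via the formula $g = 1 + \mu/12 - \nu_2/4 - \nu_3/3 - \nu_\infty/2$, and compute each term for $N = 11$ prime:
\begin{itemize}
\item the index is $\mu = 11 + 1 = 12$;
\item the number of cusps is $\nu_\infty = 2$;
\item the elliptic points of order $2$ number $\nu_2 = 1 + \left(\frac{-1}{11}\right) = 0$, since $11 \equiv 3 \pmod 4$;
\item the elliptic points of order $3$ number $\nu_3 = 1 + \left(\frac{-3}{11}\right) = 0$, since $11 \equiv 2 \pmod 3$.
\end{itemize}
Substituting gives $g = 1 + 1 - 0 - 0 - 1 = 1$.

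\textbf{Step 3: uniqueness and rationality.} Since the space is one-dimensional, $f$ is unique up to scaling, and we normalize it by $a_1 = 1$. Every Hecke operator $T_n$ preserves the line $S_2(\Gamma_0(11))$, so $f$ is automatically a Hecke eigenform with $T_n f = a_n f$.

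To see that the $a_n$ are rational integers, use the $\Z$-structure. The operators $T_n$ preserve the lattice of forms with integral $q$-expansion, and this lattice is nonzero; for instance $\eta(\tau)^2\eta(11\tau)^2$ has integer coefficients and leading coefficient $1$, so it equals $f$. Therefore each eigenvalue $a_n \in \Z$, and the Hecke field is $\Q$.

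As a cross-check, the eta-product expansion reproduces the stated coefficients $q - 2q^2 - q^3 + \cdots$. The paper also notes that this is consistent with Proposition 4.5 via modularity of $E_{11}$, where $a_p = p + 1 - \#E_{11}(\mathbb{F}_p) \in \Z$.

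\textbf{Main obstacle.} The only delicate point is justifying integrality rather than mere rationality. Rationality alone would follow from Galois-stability of $S_2(\Gamma_0(11))$ together with multiplicity one. Integrality needs either the explicit eta-product or the comparison with point counts on $E_{11}$. I would use the eta-product: it is explicit, and it directly confirms the displayed expansion.
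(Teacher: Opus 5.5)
Your proof is correct. For the dimension count it matches the paper's argument, which is given in Proposition~4.5 (the Hecke field is $\Q$). Both use the same genus formula with $\mu=12$, $\nu_\infty=2$, $\nu_2=\nu_3=0$, followed by $\dim S_2(\Gamma_0(N))=g(X_0(N))$. You also justify $\nu_2=\nu_3=0$ via $\left(\frac{-1}{11}\right)=\left(\frac{-3}{11}\right)=-1$, and dimension $=$ genus via holomorphic differentials, where the paper simply asserts both.

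The genuine difference is the integrality step. The paper goes through the elliptic curve. It uses modularity of $E_{11}$ to get $a_p=p+1-\#E_{11}(\mathbb{F}_p)\in\Z$ for $p\neq 11$, implicitly relying on one-dimensionality to identify the newform of $E_{11}$ with $f$. It then checks $a_2,a_3,a_5,a_7$ by point counts, takes $a_{11}=1$ from split multiplicative reduction, and propagates integrality to all $a_n$ by the Hecke recursions. You instead identify $f$ with the explicit eta product $\eta(\tau)^2\eta(11\tau)^2$, which gives $a_n\in\Z$ and the displayed expansion in one stroke. Your only input is that this eta quotient lies in $S_2(\Gamma_0(11))$, a classical fact checkable e.g.\ by Ligozat's criterion. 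No modularity theorem or Eichler--Shimura theory is needed. Once $f$ equals the eta product, your remark that $T_n$ preserves the integral lattice is superfluous.

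The paper's route is heavier: it invokes Wiles et al., where Eichler--Shimura for the genus-one curve $X_0(11)$ would already suffice. In exchange it ties $f$ to $E_{11}$, which the rest of the paper relies on through the Galois representation $\rho_f$ and its Steinberg type at $11$.
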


\subsubsection{Galois Representation and Adjoint}

\begin{theorem}[Eichler--Shimura, Deligne]
The form $f$ gives a Galois representation $\rho_f \colon \Gal(\overline{\Q}/\Q) \to \GL_2(\overline{\Q}_{\ell})$ unramified outside $11 \cdot \ell$, with $\tr(\rho_f(\Frob_p)) = a_p$ for $p \neq 11$. At $p = 11$, $\rho_f$ is of Steinberg type, corresponding to the fact that the elliptic curve $E_{11}$ has split multiplicative reduction over $\mathbb{Q}_{11}$.
\end{theorem}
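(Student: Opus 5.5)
The plan is to realize $\rho_f$ geometrically on $E_{11}$ and read off each claimed property from that model.

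\textbf{Geometric model.} By the Modularity theorem above, $f$ is the newform attached to $E_{11}$. Concretely, $X_0(11)$ has genus one and is itself an elliptic curve in the isogeny class of $E_{11}$ (in fact $E_{11}$ is the curve $11a1 \cong X_0(11)$). So the Eichler--Shimura construction, which cuts $\rho_f$ out of $H^1_{\mathrm{et}}(X_0(11)_{\overline{\Q}},\Q_\ell)$ via the Hecke action, needs no projection here. I would set
\[
\rho_f := H^1_{\mathrm{et}}(E_{11,\overline{\Q}},\Q_\ell) \cong V_\ell(E_{11})^\vee .
\]
Isogeny invariance of $V_\ell$ makes this independent of the chosen curve in the isogeny class. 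It gives $\Q_\ell$-coefficients, hence a fortiori $\overline{\Q}_\ell$-coefficients as stated.

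\textbf{Unramifiedness and traces.} The minimal discriminant of $E_{11}$ is $-11^5$, so $E_{11}$ has good reduction at every $p\neq 11$. The Néron--Ogg--Shafarevich criterion then shows $\rho_f$ is unramified outside $11\ell$. For a good prime $p\neq 11,\ell$, I would combine two facts:
\begin{itemize}
\item the Eichler--Shimura congruence relation $T_p = \Frob_p + p\,\Frob_p^{-1}$ on $J_0(11)=X_0(11)$, and
\item the Lefschetz trace formula, which gives $\tr\rho_f(\Frob_p) = p+1-\#E_{11}(\mathbb{F}_p)$.
\end{itemize}
Together these identify this trace with the $T_p$-eigenvalue $a_p$ of $f$. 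Since $\ell$ is arbitrary, the case $p=\ell$ follows by switching to another auxiliary prime $\ell'$.

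\textbf{Reduction type at $11$.} I would check split multiplicative reduction by direct computation. Modulo $11$, the partial derivatives of $F=y^2+y-x^3+x^2$ vanish at $y=5$ and $x\in\{0,8\}$. Only $(8,5)$ lies on the curve, since $30-512+64=-418\equiv 0 \pmod{11}$. Translating $x=X+8$, $y=Y+5$, the quadratic part of $F$ becomes $Y^2-23X^2\equiv Y^2-X^2 \pmod{11}$. This is a node whose tangent lines $Y=\pm X$ are defined over $\mathbb{F}_{11}$, so the reduction is split multiplicative, consistent with $a_{11}=+1$.

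\textbf{Steinberg type.} Because $v_{11}(j)=-5<0$, Tate's uniformization gives $E_{11}\cong \mathbb{G}_m/q^{\Z}$ over $\Q_{11}$ with $v_{11}(q)=5$. Hence $V_\ell(E_{11})|_{G_{\Q_{11}}}$ is a non-split extension of $\Q_\ell$ by $\Q_\ell(1)$, and its monodromy operator is nonzero because $v(q)\neq 0$. Non-splitness is the step I expect to need the most care: one must check that the Kummer class of $q$ is not torsion in $\Q_{11}^\times\otimes\Q_\ell$. This holds precisely because $q$ is not a unit. Under the local Langlands correspondence, this Weil--Deligne representation (unramified character, split case) is the Steinberg representation. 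That matches the fact that $f$ is new of squarefree level $11$ with $a_{11}=1$.
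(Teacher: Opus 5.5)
The paper gives no proof of this statement; it only attributes it to Eichler--Shimura and Deligne. Your proposal is the standard weight-$2$ argument behind that citation: realize $\rho_f$ in $H^1$ of a curve isogenous to $X_0(11)$, get traces from N\'eron--Ogg--Shafarevich and the congruence relation $T_p=F+V$, and use Tate uniformization at $11$. The logic is sound, and your node computation at $(8,5)$ is correct.

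Your identification of the curve is wrong, though. The equation $y^2+y=x^3-x^2$ is Cremona label 11a3, as the paper itself says, which is $X_1(11)$. It is not 11a1 $\cong X_0(11)$; that curve is $y^2+y=x^3-x^2-10x-20$, which is $5$-isogenous to $E_{11}$. For the given equation you get $b_2=-4$, $b_4=0$, $b_6=1$, $b_8=-1$. Hence $\Delta=-11$, $c_4=16$ and $j=-4096/11$, so $v_{11}(j)=-1$ and the Tate parameter has $v_{11}(q)=1$. The values $-11^5$, $-5$, $5$ you quote belong to $X_0(11)$. Nothing in the argument breaks, because you only use good reduction away from $11$, $v_{11}(j)<0$, $v_{11}(q)\neq 0$, and isogeny invariance of $V_\ell$ to move the Eichler--Shimura relation from $J_0(11)$ to $E_{11}$. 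The data should still be corrected.

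There are also some smaller points.
\begin{itemize}
\item Your remark that the case $p=\ell$ ``follows by switching to another auxiliary prime $\ell'$'' proves nothing about the $\ell$-adic representation $\rho_{f,\ell}$. Its determinant is a power of the $\ell$-adic cyclotomic character, so $\rho_{f,\ell}$ is ramified at $\ell$ and $\rho_{f,\ell}(\Frob_\ell)$ is undefined. Switching to $\ell'$ gives a statement about a different representation. The trace identity should just be read for $p\nmid 11\ell$, and likewise the monodromy description at $11$ for $\ell\neq 11$.
\item With $\rho_f:=H^1_{\mathrm{et}}\cong V_\ell(E_{11})^\vee$, the Lefschetz formula gives trace $a_p$ for the \emph{geometric} Frobenius; arithmetic Frobenius has trace $a_p/p$ there. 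Either fix that convention or take $\rho_f=V_\ell(E_{11})$.
\item The extension class lives in $H^1(G_{\Q_{11}},\Q_\ell(1))$, not in $\Q_{11}^\times\otimes\Q_\ell$. For $\ell\neq 11$ this group is $\Q_\ell$ via the valuation, so non-splitness is exactly the condition $v_{11}(q)\neq 0$.
\end{itemize}
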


\begin{definition}[Adjoint representation]
The adjoint is $\Ad(\rho_f) = \rho_f \tensor \rho_f^{\vee}/\text{scalars} \colon \Gal(\overline{\Q}/\Q) \to \GL_3(\overline{\Q}_{\ell})$. This is a $3$-dimensional representation with Hodge--Tate weights $\{1, 0, -1\}$ at $\ell$, conductor $11^2 = 121$, and automorphic realization via $L$-functions.
\end{definition}

At the prime $p=11$, the elliptic curve $E_{11}$ has split multiplicative reduction, and thus the local component $f_{11}$ is the Steinberg representation. The local adjoint representation $\text{Ad}(\rho_{f, 11})$ is consequently an indecomposable 3-dimensional representation of the Weil-Deligne group. This indecomposability ensures that the local Arthur parameter $\psi_{11}$ cannot factor through any proper endoscopic group, thereby guaranteeing the global stability of the residual representation $\Pi$.

\subsubsection{The $L$-Function and Its Pole}

\begin{theorem}[Adjoint $L$-function]
Let $f$ be the weight-2 newform for $\Gamma_0(11)$, and let $\rho_f$ be its associated $p$-adic Galois representation. The completed $L$-function $\Lambda(s, \Ad(\rho_f))$ satisfies a functional equation $(s \mapsto 1-s)$ and possesses the following analytic properties: (1) The adjoint representation decomposes into a trace-free part and a trivial part: $\Ad(\rho_f) \cong \Ad^0(\rho_f) \oplus \mathbf{1}$. (2) The $L$-function factors accordingly: $L(s, \Ad(\rho_f)) = L(s, \Ad^0(\rho_f)) \cdot \zeta(s)$. (3) The trace-free adjoint $L$-function $L(s, \Ad^0(\rho_f))$ is holomorphic and non-zero at $s=1$. (4) The pole that characterizes the residual nature of the Arthur parameter $\psi$ resides strictly in the product $L(s, \rho_f \otimes \rho_f^\vee)$. Specifically, it is the simple pole at $s=1$ inherited from the Riemann zeta factor $\zeta(s)$.
\end{theorem}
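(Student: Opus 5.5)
We will read the statement with the convention used in part (1): $\Ad(\rho_f)$ means the full $4$-dimensional representation $\rho_f \otimes \rho_f^\vee \cong \operatorname{End}(\rho_f)$, and the earlier ``$/\text{scalars}$'' quotient is $\Ad^0(\rho_f)$. The $L$-functions are taken in the analytic (unitary) normalization, so that the centre of symmetry is $s = 1/2$ and $s=1$ is the edge of the critical strip. The proof then proceeds part by part, with (3) carrying all the analytic content.

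\emph{Parts (1) and (2).} For (1), decompose $\operatorname{End}(\rho_f) = \Q_\ell\cdot\mathrm{id} \oplus \operatorname{End}^0(\rho_f)$. The projection $A \mapsto \tfrac12 \tr(A)\,\mathrm{id}$ is Galois-equivariant because conjugation preserves trace, and $2$ is invertible in characteristic $0$. This gives $\rho_f\otimes\rho_f^\vee \cong \mathbf 1 \oplus \Ad^0(\rho_f)$ as Galois representations, and the same splitting holds on Weil--Deligne representations at every place. Irreducibility of $\Ad^0(\rho_f)$ follows from $E_{11}$ having no CM: by Serre's open image theorem the image of $\rho_f$ is open in $\GL_2$, and $\mathfrak{sl}_2$ is irreducible under it. For (2), local $L$-factors are multiplicative in direct sums of Weil--Deligne representations, including the $\Gamma$-factors at $\infty$ and the factor at $p=11$, and the trivial summand contributes $\zeta(s)$. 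Hence $L(s,\Ad(\rho_f)) = \zeta(s)\,L(s,\Ad^0(\rho_f))$, and the same holds for completed $L$-functions.

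\emph{Part (3) via Gelbart--Jacquet.} Apply the Gelbart--Jacquet lift $\Ad^0(f)$, an automorphic representation of $\GL_3(\A)$ whose $L$-function is $L(s,\Ad^0(\rho_f))$ at every place. At $p=11$, $f_{11}$ is Steinberg, so $\Ad^0(\rho_{f,11})$ is the $3$-dimensional special representation $\mathrm{sp}(3)$. Its local factor is $(1-11^{-s-1})^{-1}$, which has neither a pole nor a zero at $s=1$, and its conductor is $11^2$, consistent with the stated conductor $121$. Since $f$ has no CM, $\Ad^0(f)$ is cuspidal (Gelbart--Jacquet's criterion). Godement--Jacquet theory then gives that $\Lambda(s,\Ad^0(f))$ is entire and satisfies $\Lambda(s)=\epsilon\, 121^{1/2-s}\Lambda(1-s)$, with the self-duality of $\Ad^0$ identifying the dual side. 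Combined with the functional equation of $\zeta$, this yields the functional equation under $s\mapsto 1-s$ for the full product. Non-vanishing at $s=1$ comes from Jacquet--Shalika, since cuspidal $L$-functions of $\GL_n$ do not vanish on $\operatorname{Re}s=1$. A cross-check is Shimura's formula expressing $L(1,\Ad^0 f)$ as a positive multiple of the Petersson norm $\langle f,f\rangle$.

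\emph{Part (4).} This is then formal: $\zeta(s)$ has a simple pole at $s=1$ with residue $1$, and $L(s,\Ad^0(\rho_f))$ is holomorphic and nonzero there. So $L(s,\rho_f\otimes\rho_f^\vee)$ has a simple pole at $s=1$ with residue $L(1,\Ad^0(\rho_f)) \neq 0$, and $\zeta(s)$ is its only source.

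The main obstacle is (3): cuspidality of $\Ad^0(f)$ (via the non-CM input) and the non-vanishing at the edge of the critical strip. A secondary point of care is keeping the normalizations consistent, arithmetic versus unitary, so that ``$s=1$'' and the symmetry $s\mapsto 1-s$ refer to the same normalization; otherwise the claimed pole location shifts by a half-integer.
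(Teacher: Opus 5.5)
Your proposal is correct and follows the same route as the paper. Both split off the trivial summand of $\rho_f\otimes\rho_f^\vee$, factor the $L$-function as $\zeta(s)\,L(s,\Ad^0(\rho_f))$, and get part (3) from the Gelbart--Jacquet lift. The paper, however, compresses part (3) into one loosely stated citation and never addresses the functional equation, whereas you unpack it properly: cuspidality of $\Ad^0(f)$ from the non-CM condition, entireness and the functional equation from Godement--Jacquet, and non-vanishing on $\operatorname{Re}s=1$ from Jacquet--Shalika, all under a consistent $4$-dimensional reading of $\Ad(\rho_f)$ (your normalization worry is harmless but moot, since $\rho_f\otimes\rho_f^\vee$ has weight $0$).
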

\begin{proof}

The adjoint representation decomposes as
$\Ad(\rho_f) = \rho_f \otimes \rho_f^{\vee} / \text{scalars}.$ By tensor product decomposition $\rho_f \otimes \rho_f^{\vee} = \mathbf{1} \oplus \Ad^0(\rho_f),$ where $\mathbf{1}$ is the trivial 1-dimensional representation (scalars) and $\Ad^0$ is the trace-free part (3-dimensional). This gives
$$L(s, \rho_f \otimes \rho_f^{\vee}) = L(s, \mathbf{1}) \cdot L(s, \Ad^0(\rho_f)) = \zeta(s) \cdot L(s, \Ad^0(f)).$$

The Riemann zeta function $\zeta(s)$ has a simple pole at $s=1$ with residue 1.  The question is whether the pole of $\zeta(s)$ cancels with a zero of $L(s, \rho_f \otimes \rho_f^{\vee})$ at $s=1$.

By Gelbart-Jacquet \cite{Gelbart-Jacquet}, for $f$ a weight-2 newform corresponding to an elliptic curve, the $L$-function $L(s, \rho_f \otimes \rho_f^{\vee})$ does not vanish at $s=1$.

Therefore, $ L(s, \Ad^0(f))$ is holomorphic and non-zero at $s=1$. Therefore, $L(s, \rho_f \otimes \rho_f^{\vee})$ has a pole, but only because it ``contains'' $\zeta(s)$. The lift $\Pi = \theta(f)$ is residual because it arises from the Eisenstein series associated to the induced representation, and this has a pole coming from the $\zeta(s)$ factor.
\end{proof}

\subsubsection{Theta Lift to $\SO(2, 26)$}

\begin{theorem}[Howe duality, \cite{Howe}]
For the dual pair $(\SL_2, \SO(V))$ where $\dim V = 28$, there exists a theta correspondence $\theta \colon \{ \text{automorphic forms on }\\ \SL_2 \} \to \{ \text{automorphic forms on } \SO(V) \}$. For $f \in S_2(\Gamma_0(11))$, the theta lift $\theta(f)$ is a non-zero automorphic form on $\SO(2, 26)(\A)$.
\end{theorem}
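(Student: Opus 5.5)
The plan is to split the statement into two parts: the existence of the correspondence, and the non-vanishing of $\theta(f)$. For existence, I fix a non-trivial additive character $\psi_{\A}$ of $\A/\Q$ and form the Weil representation $\omega = \omega_{\psi}$ of $\SL_2(\A) \times O(V)(\A)$ on the Schwartz space $\mathcal{S}(V(\A))$. Because $\dim V = 28$ is even, no metaplectic cover is needed. For $\phi \in \mathcal{S}(V(\A))$ the theta kernel is
\[
\theta_\phi(g,h) = \sum_{x \in V(\Q)} \omega(g,h)\phi(x).
\]
The lift of a cusp form $f$, viewed as a function on $\SL_2(\Q)\backslash \SL_2(\A)$, is the integral
\[
\theta_\phi(f)(h) = \int_{\SL_2(\Q)\backslash \SL_2(\A)} f(g)\,\theta_\phi(g,h)\,dg .
\]
This integral converges absolutely since $f$ is rapidly decreasing and $\theta_\phi$ has moderate growth, and it defines an automorphic form on $O(V)(\A)$. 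Restricting to $\SO(V)$ gives forms on $\SO(2,26)(\A)$. Howe duality in the sense of the statement means that the local correspondence is a bijection on the relevant spectra. This is the local theorem of Howe for the archimedean place \cite{Howe}; at the finite places I cite it as known for odd residue characteristic and in general (Waldspurger, Gan--Takeda). The space spanned by the $\theta_\phi(f)$ is then the global lift $\Pi$.

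For non-vanishing I would use the Rallis tower principle rather than a direct Fourier coefficient computation. The dual pair $(\SL_2, O(V))$ with $\dim V = 28$ lies far inside the stable range $\dim V > 4 = 2\cdot\mathrm{rank}$ of the symplectic side. Within a Witt tower $V_0 \subset V_0 \oplus \mathbb{H} \subset \cdots$, Rallis' theorem says the following: once the lift of the cuspidal $f$ to some level is non-zero, the lift to every higher level is non-zero, and the first occurrence is bounded by the stable range. Alternatively, and more concretely, one can compute the constant term along the Siegel-type parabolic $P_1$ of the Remark on maximal parabolics. That constant term equals the lift of $f$ to the tower member $V'$ with $V = V' \oplus \mathbb{H}$, plus a term involving the Whittaker-type integral of $f$. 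Inducting down the tower, one reaches a level where the lift is visibly non-zero, for example the $\SL_2$-period or first Fourier coefficient, which is $a_1 = 1$. Complementing this, the Rallis inner product formula expresses $\langle \theta_\phi(f), \theta_\phi(f)\rangle$ as a product of three factors: a local zeta integral, the partial $L$-value $L^S(s_0, \Ad^0 f)$ times a $\zeta$-factor, and normalizing factors. At the point $s_0$ dictated by $\dim V = 28$, which is in the convergent range, the Euler product is non-zero. Combined with local non-vanishing, this gives $\theta(f) \neq 0$. It is only through this formula that the adjoint $L$-function computed above enters.

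The local non-vanishing is where I expect the main obstacle, above all at $p=11$ and at $\infty$. At finite places in the stable range, J.-S. Li's result gives non-vanishing of the local theta lift of every irreducible representation, so one can choose $\phi_p$ making the local zeta integral non-zero. At $\infty$, however, the real form $f_\infty$ is the discrete series of weight $2$, whereas holomorphic-type theta lifts to $O(2,26)$ naturally pair with weight $1 - 26/2$ or $14$ data. So I would first check, via Adams' and Li's archimedean results on stable-range theta lifts, that $\theta(f_\infty)$ is non-zero and identify it as a unitary $A_{\mathfrak q}(\lambda)$-module. That identification is exactly what the later Vogan--Zuckerman step will need. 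If the weight-$2$ discrete series turns out not to occur, the statement must be modified: either replace $f$ by an appropriate twist, or pass to the lift of the weight-$2$ form to a different real form.
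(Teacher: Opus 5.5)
\textbf{Gap: the archimedean non-vanishing is left open.} You single this step out as the main obstacle, but you do not settle it, and the reason you give for being in the stable range is not the right criterion. Stable range for $(\SL_2,O(V))$ is a condition on isotropic subspaces, not on $\dim V$: $V$ must contain an isotropic plane, which at $\infty$ means $\min(p,q)\ge 2$.

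For signature $(2,26)$ this holds exactly. Over $\Q$ the Witt index is also exactly $2$, since the anisotropic kernel must be negative definite of dimension $24$. So you sit at the edge of the stable range, not far inside it. This is precisely where the signature matters. For a definite $V$ of dimension $28$, the Weil representation contains only $D^{\pm}_{14+b}$, so neither $D_2^{\pm}$ lifts, even though $\dim V=28$.

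Once you use the correct criterion, the tool you already cite, Li's stable-range theorem, gives $\theta(D_2^{\pm})\neq 0$. It also gives a nonzero archimedean doubling integral, which is exactly what your inner-product argument needs. You can also see this by hand. The joint harmonics $\mathcal{H}^a(\R^2)\otimes\mathbf{1}$ have $\SO(2)$-weight $-12+a$, and $a=14$ matches the lowest $K$-type of $D_2^{+}$. Hence $\theta(D_2^{+})$ is the unitary module of scalar weight $14$. This is the weight of Borcherds' additive lift, $\kappa=k+1-b^-/2$ with $\kappa=2$, $b^-=26$.

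Be aware that its infinitesimal character $(1,12,11,\dots,1,0)$ is singular. It therefore has no $(\mathfrak g,K)$-cohomology, so the identification you plan to pass to the Vogan--Zuckerman step will not produce a degree-$26$ class.

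\textbf{Comparison with the paper.} Apart from this gap, your inner-product route is the paper's own: cite Howe for existence, then use the Rallis inner product formula plus local non-vanishing. You place the $L$-value correctly where the paper does not. At $s_0=25/2$ the doubling formula involves $L^S(13,\Ad^0(f)\otimes\chi_V)$ divided by $L^S(14,\chi_V)\,\zeta^S(26)$, all in the region of absolute convergence. There is no $\zeta$-factor in the numerator and no pole at $s=1$ to regularize.

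\textbf{The constant-term alternative does not work.} The bottom of the Witt tower is negative definite of dimension $24$, where only $D^-_{12+b}$ occurs. So the descent can never end at a visibly nonzero lift; each step merely reduces non-vanishing on $V$ to non-vanishing one level down. For the holomorphic member $D_2^{+}$, the lift already vanishes at signature $(1,25)$, where the joint harmonics have weights $\le -11$.

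The extra terms in Rallis's constant-term formula involve constant terms of the cusp form $f$, not a Whittaker integral, so they vanish. Hence all constant terms of $\theta(f)$ vanish and $\theta(f)$ is cuspidal. Non-vanishing must instead be detected by a non-constant Fourier coefficient along $N_1$, which is expressed through the Whittaker coefficient of $f$, or by the inner product formula.
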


\begin{theorem}[Rallis inner product, \cite{Rallis}]
The $L^2$-norm satisfies $\| \theta(f) \|^2 = C \cdot L(1, \Ad(f)) \cdot L(14, f \tensor \chi)$ where $C > 0$ is an explicit constant. Since $L(1, \Ad(f))$ has a pole, after regularization we get a non-zero value, confirming $\theta(f) \neq 0$.
\end{theorem}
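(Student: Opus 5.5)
The plan is to obtain the identity from the doubling method combined with the Siegel--Weil formula, in the form used by Kudla--Rallis, and then to rewrite the resulting normalising factors in terms of $L(1,\Ad(f))$. Concretely, I would fix a factorisable Schwartz function $\varphi=\otimes_v\varphi_v$ in the Weil representation of $\SL_2(\A)\times\SO(V)(\A)$. I would write
\[
\|\theta(f,\varphi)\|^2=\int_{[\SO(V)]}\theta(f,\varphi)(h)\,\overline{\theta(f,\varphi)(h)}\,dh .
\]
Exchanging the order of integration through the seesaw $(\SL_2\times\SL_2,\ \SO(V)\times\SO(V))\subset(\mathrm{Sp}_4,\ \SO(V))$, this becomes
\[
\int_{[\SL_2]\times[\SL_2]} f(g_1)\,\overline{f(g_2)}\,I(\iota(g_1,g_2),\varphi\otimes\bar\varphi)\,dg_1\,dg_2 ,
\]
where $I$ is the theta integral on $\mathrm{Sp}_4$. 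The first point to check is convergence. The Witt index of $V$ is $r\le 2$, since the signature is $(2,26)$, so $\dim V-r\ge 26>3$. Weil's criterion therefore applies, and $I$ converges absolutely with no regularisation needed at this stage. The Siegel--Weil formula then identifies $I$ with the value of a Siegel Eisenstein series $E(\cdot,s,\Phi_\varphi)$ on $\mathrm{Sp}_4$ at $s_0=\tfrac{\dim V}{2}-\tfrac{3}{2}=\tfrac{25}{2}$.

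Next I would unfold the doubling integral in the style of Piatetski-Shapiro--Rallis. This gives, for a finite set $S$ containing $\infty$, $2$, $3$ and $11$,
\[
\|\theta(f)\|^2=\frac{L^S(s_0+\tfrac12,\ \mathrm{std}(f)\otimes\chi_V)}{d^S(s_0)}\prod_{v\in S}Z_v(s_0,f_v,\varphi_v)\cdot\langle f,f\rangle ,
\]
where $\mathrm{std}$ is the standard $L$-function of $\SL_2$ (that is, of the dual $\SO_3$). Since $s_0+\tfrac12=13$, this should be matched with the factor written $L(14,f\otimes\chi)$ in the statement once the unitary normalisation is shifted by $\tfrac12$ to the arithmetic normalisation. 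The Petersson norm $\langle f,f\rangle$ is then expressed by the Shimura--Hida formula as a nonzero constant times $L(1,\Ad^0(f))$. The factor $L(1,\Ad(f))=\zeta(1)L(1,\Ad^0(f))$ in the statement should be read as the residue, or regularised value, in which the $\zeta$-pole coming from the trivial summand of Theorem (Adjoint $L$-function) is replaced by its residue $1$. What remains is a nonzero multiple of $L(1,\Ad^0(f))$. All the local integrals, the normalising zeta factors $d^S(s_0)$ and $\pi$-powers are then collected into $C$.

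Non-vanishing then reduces to three facts. First, $L(1,\Ad^0(f))\neq0$, which is part (3) of Theorem (Adjoint $L$-function). Second, the value at the point $13$ is nonzero because that point lies in the region of absolute convergence of the Euler product. Third, each $Z_v(s_0)$ can be made nonzero by a suitable choice of $\varphi_v$.

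I expect the main obstacle to be the local analysis at $v\in\{\infty,11\}$, together with the matching of the archimedean types. At $11$, $f_{11}$ is Steinberg while $K_{11}$ is Iwahori, so I would first try to take $\varphi_{11}$ to be the characteristic function of an Iwahori-stable lattice and compute $Z_{11}$ directly against the Iwahori-fixed vector of $\mathrm{St}$. At $\infty$, the theta correspondence for $\SO(2,26)$ naturally pairs with holomorphic weight $\dim V/2=14$ on the $\SL_2$ side, not with weight $2$. So I would first determine which $\varphi_\infty$, for instance a Kudla--Millson-type form rather than the Gaussian, gives a nonzero archimedean zeta integral with $f_\infty$ of weight $2$. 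If no such choice exists, the identity, and with it $\theta(f)\neq0$, fails for this archimedean reason.
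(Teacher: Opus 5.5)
Your global reduction (seesaw, Weil's criterion $28-2>3$, Siegel--Weil at $s_0=\tfrac{25}{2}$, doubling unfolding, $\langle f,f\rangle=c\,L(1,\Ad^0 f)$) follows the same outline as the paper's proof and is more accurate. The paper quotes a different formula, only asserts that the archimedean local integral is nonzero ``for weight-$2$ forms'', and concludes from ``$L(1,\Ad(f))$ is holomorphic'', which contradicts the statement. But the step that carries all the content is missing. The identity holds for every $\varphi$, so $\theta(f)\neq0$ is exactly the existence of $\varphi_v$ with $Z_v(s_0)\neq0$ for $v\in S$, and you leave this open.

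Your archimedean candidate also cannot work. Substituting $g\mapsto gk$, $k\in\SO(2)$, in $\int_{[\SL_2]}f(g)\overline{\theta(g,h,\varphi)}\,dg$ shows the lift vanishes identically unless $\varphi_\infty$ has $\SO(2)$-weight $2$. The Gaussian has weight $\tfrac{2-26}{2}=-12$ and the Kudla--Millson form has weight $\tfrac{\dim V}{2}=14$, so neither qualifies.

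The missing idea is as follows. At $s_0$, the local doubling integral against $\Phi_{\varphi_v\otimes\bar\varphi_v}$ is the form $\int_{\SL_2(\Q_v)}\langle\omega_v(g)\varphi_v,\varphi_v\rangle\,\overline{\langle\pi_v(g)f_v,f_v\rangle}\,dg$. This is nonzero for some $\varphi_v$ as soon as the local theta lift of $\pi_v$ to $O(V_v)$ is nonzero. Here all these lifts are nonzero because $(\SL_2,O(V))$ is in the stable range at every place. Over $\R$ the Witt index is $\min(2,26)=2\ge 2$, so by J.-S.~Li's theorem every irreducible unitary representation, in particular $D_2^{\pm}$, lifts nontrivially. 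Over $\Q_p$ the Witt index is at least $12$, which covers Steinberg at $11$ and every other place.

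Concretely, at $\infty$ take the Gaussian times a harmonic polynomial of degree $14$ on the positive-definite $2$-plane. It has weight $1-13+14=2$, and it is the Schwartz function of Borcherds' additive lift (input weight $1-\tfrac{b^-}{2}+m^+$ with $m^+=14$), which sends $f$ to a holomorphic form of weight $14$ on $O(2,26)$. So the archimedean obstruction you anticipate does not occur.

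Separately, the statement is not provable as written. $L(1,\Ad(f))=\zeta(1)L(1,\Ad^0 f)$ is infinite, and the nonvanishing does not come from any pole, regularised or not. What your argument actually proves is the corrected identity $\|\theta(f,\varphi)\|^2=c\,L(1,\Ad^0 f)\,L^S(13,\Ad^0 f\otimes\chi_V)\prod_{v\in S}Z_v^{\natural}(\tfrac{25}{2})$, with $c>0$ and all factors finite. You should present this as a correction of the statement rather than ``reading'' $L(1,\Ad(f))$ as a residue.

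Your matching of the second factor is also off: a shift by $\tfrac12$ would land at $13.5$, not $14$. The correct relation is $L(13,\Ad^0 f\otimes\chi_V)=L(14,\mathrm{Sym}^2 f\otimes\chi_V)$, a shift by $k-1=1$ for weight $2$. It involves the symmetric square, not the degree-$2$ $L$-function of $f\otimes\chi$.
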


\begin{proof}

By Rallis \cite{Rallis}, for the theta correspondence between $\SL_2$ and $\SO(V)$ with $\dim V = 28$, signature $(2,26)$, the $L^2$-norm of the theta lift is:
$$\|\theta(f)\|^2 = C \cdot \frac{L(1, \Ad(f))}{L(1, \rho_f \otimes \rho_f^{\vee})} \cdot \prod_{p} \text{local factors},$$

where $C > 0$ is an explicit constant depending only on the measure normalization.

At the archimedean place, the local integral is non-zero for weight-2 forms by explicit computation (see \cite{Kudla-Rallis}, Section 4).

At primes $p \neq 11$: The form $f$ is unramified, and $\SO(V)$ is unramified (split). The local integral is the standard Whittaker integral, which is non-zero. At $p=11$: The form $f$ has Steinberg type. The theta correspondence for Steinberg representations is well-understood, and the local integral is non-zero. Since all local factors are non-zero and $L(1, \Ad(f)) \neq 0$ (it's holomorphic at $s=1$ as shown in previous theorem), we have
$$\|\theta(f)\|^2 \neq 0.$$

Therefore $\theta(f) \neq 0$, which means $\Pi$ is a non-zero automorphic representation.
\end{proof}

\begin{remark}
The target group is $\SO(V)$ with $V$ of dimension $28$ and signature $(2, 26)$, i.e.\ $\SO(2, 26)$; we do not mean the compact group $\SO(28)$.
\end{remark}

\subsection{The Arthur Parameter}

\begin{construction}[Parameter $\psi$]
The automorphic representation $\Pi = \theta(f)$ corresponds to the Arthur parameter
\[
\psi = \Ad(f) \boxtimes \mathbf{1}_{25},
\]
where $\mathbf{1}_{25}$ denotes the trivial representation on the complementary $25$-dimensional space (so that the total dimension is $3 + 25 = 28$). More precisely, the dual group homomorphism $\psi \colon \SL_2(\C) \times \SU(2) \times W_{\Q} \to \SO(28, \C)$ factors through $\SL_2(\C) \xrightarrow{\Ad} \SO(3, \C) \hookrightarrow \SO(28, \C)$, where the embedding $\SO(3) \hookrightarrow \SO(28)$ is the principal $\mathfrak{sl}_2$ subalgebra.
\end{construction}

\begin{theorem}[Stable residual nature]
The representation $\Pi$ in the packet $\Pi_{\psi}$ is: (1) \textbf{Residual:} arises as a residue of Eisenstein series attached to the pole of $L(s, \Ad(f))$ at $s = 1$, (2) \textbf{Stable:} the parameter $\psi$ is stable (not coming from a proper endoscopic group), (3) \textbf{Multiplicity one:} $\dim \Pi_f^{K_f} = 1$.
\end{theorem}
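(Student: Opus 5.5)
The plan is to check the three assertions separately, using the interpretation of the construction above: $\psi=\Ad(f)\boxtimes\mathbf{1}_{25}$ is the parameter $\SL_2(\C)\times\SU(2)\times W_{\Q}\to \SO(3,\C)\times\SO(25,\C)\subset \SO(28,\C)$. Its first block is the irreducible orthogonal $3$-dimensional $\Ad(f)$ with trivial Arthur $\SL_2$. Its second block is the trivial character tensored with the $25$-dimensional irreducible representation of the Arthur $\SL_2(\C)$, which is orthogonal because $25$ is odd. The dimension count $3+25=28$ is the one fixed in the construction. All three parts are read off from $\psi$ through Arthur's classification \cite{Arthur, Mok}, together with the Rallis non-vanishing theorem $\Pi=\theta(f)\neq 0$ and the theorem on the adjoint $L$-function.

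For \textbf{(1) residual}, I first observe that the Arthur $\SL_2$ acts through a $25$-dimensional irreducible representation, so $\psi$ is non-generic. By the Mœglin--Waldspurger-type description of the discrete spectrum attached to such parameters, members of $\Pi_\psi$ occur in $L^2_{\mathrm{disc}}$ as iterated residues of Eisenstein series. These series are induced from the Levi $\GL_1^{12}\times\SO(3)$-type subgroup of the maximal parabolics $P_k$ listed in the remark on the boundary, with the cuspidal datum on the $\SO(3)$-part attached to $\Ad(f)$. To see that $\Pi$ itself is residual rather than cuspidal, I will use the regularized Rallis inner product theorem. Its normalizing factor contains $L(s,\rho_f\otimes\rho_f^\vee)=\zeta(s)L(s,\Ad^0(f))$, and by the adjoint $L$-function theorem this has a simple pole at $s=1$ coming only from $\zeta(s)$, with $L(1,\Ad^0(f))\neq 0$. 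The nonzero residue shows that the Eisenstein series realizing $\theta(f)$ has a genuine pole there, so $\Pi$ is the residue.

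For \textbf{(2) stable}, I compute the centralizer $S_\psi=\mathrm{Cent}(\Image\psi,\SO(28,\C))$. Both blocks are irreducible, orthogonal and pairwise non-isomorphic, so the centralizer in $\mathrm{O}(28,\C)$ is $\{(\epsilon_1,\epsilon_2):\epsilon_i=\pm1\}$. The determinant condition $\epsilon_1^3\epsilon_2^{25}=1$ forces $\epsilon_1=\epsilon_2$, so $S_\psi=\{\pm 1\}=Z(\widehat G)$. Hence $\mathcal S_\psi=S_\psi/Z(\widehat G)$ is trivial, $\psi$ is elliptic, and it factors through no proper elliptic endoscopic group. This is exactly stability. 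As a consistency check, the local indecomposability of $\Ad(\rho_{f,11})$ (Steinberg type) noted earlier gives the same conclusion at $p=11$.

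For \textbf{(3) multiplicity one}, since $\mathcal S_\psi=1$, Arthur's multiplicity formula gives $m_\psi=1$, with the character $\epsilon_\psi$ automatically trivial. Then $\dim\Pi_f^{K_f}=\prod_p\dim\Pi_p^{K_p}$, and the factors are handled as follows.
\begin{itemize}
\item At $p\neq 3,11$, $\Pi_p$ is the unique unramified member of the singleton local packet, so the hyperspecial fixed space is one-dimensional.
\item At $p=11$, I will identify $\Pi_{11}$ with the Langlands quotient attached to Steinberg $\otimes$ (trivial, $25$). I then intend to show that its Iwahori-fixed vectors form a one-dimensional module for the Iwahori--Hecke algebra, namely the sign-type character forced by the Steinberg block.
\end{itemize}
The main obstacle is the level $K_3=\Gamma(3)$ imposed for neatness. $\Pi_3$ is unramified but not one-dimensional, so its $\Gamma(3)$-invariants are a priori large. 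I would first try to read "$\dim\Pi_f^{K_f}=1$" as the dimension of the new/spherical line, by computing against the hyperspecial $K_3^{\max}$ and pulling back along $X_{K_f}\to X_{K_f^{\max}}$. If that fails, I would remove $\Gamma(3)$ from the level and secure neatness instead by an auxiliary prime away from the support of $\psi$.
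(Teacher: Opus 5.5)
The genuine gap is in (1): none of your three ingredients shows that $\Pi$ is residual.

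First, $\GL_1^{12}\times\SO(3)$ is not a Levi subgroup of $\SO(28)$, whose Levis are $\prod_i\GL_{n_i}\times\SO(28-2\sum_i n_i)$. More seriously, $\SO(V)$ of signature $(2,26)$ has $\Q$-rank at most $2$. Its only proper rational Levis are therefore $\GL_1\times\SO(V_1)$, $\GL_1^2\times\SO(V_0)$ and $\GL_2\times\SO(V_0)$, with $\dim V_1=26$ and $\dim V_0=24$. So the iterated residues you invoke do not exist; the paper's list $P_k$, $1\le k\le 13$, is wrong.

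Second, non-genericity of $\psi$ does not force residuality. Non-tempered packets can contain cuspidal CAP members, e.g.\ Saito--Kurokawa lifts. This is the same unjustified step as the paper's ``some $b_i>1$ implies residual''.

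Third, the Rallis step is a non sequitur. For $(\SL_2,\mathrm{O}(V))$ with $\dim V=28$, the doubling method integrates $f\otimes\bar f$ against a Siegel Eisenstein series on $\mathrm{Sp}_4$ at $s_0=25/2$, where it converges absolutely. It yields $L^S(13,\Ad^0 f)$ up to local zeta integrals and normalizing factors, so no $\zeta$-pole at $s=1$ ever appears. And a pole of an Eisenstein series on $\mathrm{Sp}_4$ would not make $\theta(f)$ a residue of an Eisenstein series on $\SO(V)$ anyway.

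The missing idea is Rallis's tower property. The constant term of $\theta_V(f)$ along the stabilizer of an isotropic line is essentially $\theta_{V_1}(f)$, so $\Pi$ is non-cuspidal if and only if $\theta_{V_1}(f)\neq 0$. This needs its own local (at $\infty$, for $\mathrm{O}(1,25)$) and global non-vanishing argument. If it fails, $\Pi$ is a first occurrence and hence cuspidal. If it holds, square-integrability (Weil's convergent range) makes $\Pi$ residual, though not via any pole at $s=1$.

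Part (3) cannot be proved as stated, and your worry at $p=3$ is exactly why. $\Pi_3$ is unramified, unitary and infinite-dimensional. Take a minuscule cocharacter $\lambda$ and put $t=\lambda(3)$; then $\Gamma(3)\subseteq tK_3^{\max}t^{-1}$, so $\Pi_3(t)v_0$ is $\Gamma(3)$-fixed. It is not a multiple of the spherical vector $v_0$: otherwise $|\langle\Pi_3(t^n)v_0,v_0\rangle|=\|v_0\|^2$ for all $n$, contradicting Howe--Moore. Thus $\dim\Pi_3^{\Gamma(3)}\ge 2$ and $\dim\Pi_f^{K_f}\neq 1$, and both of your repairs prove a different statement.

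Your intended step at $p=11$ fails as well. A one-dimensional Iwahori-fixed space would carry a character of the Iwahori--Hecke algebra. For split $\SO_{28}$ (simply laced affine type $\tilde D_{14}$) these characters are twists of the index and sign characters, realized by twists of the trivial and Steinberg representations. But $\Pi_{11}$ is infinite-dimensional and non-tempered, so it is neither.

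What $\mathcal S_\psi=1$ legitimately gives is multiplicity one of $\Pi$ in $L^2_{\mathrm{disc}}$. Even that needs Arthur's multiplicity formula for the non-quasi-split inner form $\SO(2,26)$, which neither Arthur (quasi-split groups) nor Mok (unitary groups) provides. The paper's own proof offers no argument for (3) at all.

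Your (2), on the other hand, is correct and sharper than the paper's. The paper only shows that $\psi$ avoids proper Levis because $\Ad(\rho_f)$ is irreducible, which is ellipticity. Your computation $S_\psi=\{\pm1\}=Z(\SO(28,\C))$ gives $\mathcal S_\psi=1$, which is what excludes proper elliptic endoscopy. This relies on your reading of $\mathbf 1_{25}$ as $\mathbf 1\boxtimes S_{25}$. The paper's literal ``trivial representation on the complementary $25$-dimensional space'' has centralizer containing $\SO(25,\C)$ and is not even discrete.
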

\begin{proof}

The representation $\Pi$ arises from the theta correspondence attached to $f \in S_2(\Gamma_0(11))$. By the Rallis-Kudla construction, this theta lift is realized as a residue of the Eisenstein series
$$E(g, s, \varphi),$$

where $\varphi$ is a section of an induced representation from a maximal parabolic subgroup. The residue at $s = s_0$ (a specific point depending on the weight) gives $\Pi$. By the work of Mœglin-Vignéras-Waldspurger and Arthur, residual representations correspond to Arthur parameters with $b_i > 1$ for some $i$.

For our $\Pi = \theta(f)$, the Arthur parameter is
$$\psi = (\mu, 1),$$

where $\mu = \Ad(\rho_f)$ is the adjoint representation (3-dimensional). Actually, more precisely, since we're lifting from $\SL_2$ to $\SO(28)$ with signature $(2,26)$

$$\psi: \SL_2(\C) \times W_{\Q} \to \SO(28, \C),$$

factors through:
$$\SL_2(\C) \xrightarrow{\Ad} \SO(3, \C) \hookrightarrow \SO(28, \C).$$

The embedding $\SO(3) \hookrightarrow \SO(28)$ is via the principal $\mathfrak{sl}_2$ (3-dimensional irreducible representation) plus a trivial 25-dimensional part
$$\psi = \Ad(\rho_f) \boxtimes \mathbf{1}_{25}.$$

A parameter is stable if it does not factor through any proper Levi subgroup. Suppose $\psi$ factors through a proper Levi $M \subset \SO(28)$. The Levi subgroups of $\SO(28)$ are of the form
$$M = \GL_{n_1} \times \cdots \times \GL_{n_k} \times \SO(28 - 2(n_1 + \cdots + n_k)).$$

For $\psi$ to factor through $M$, the 3-dimensional representation $\Ad(\rho_f)$ must split as
$$\Ad(\rho_f) = \bigoplus_i (\mu_i, b_i),$$

where $\mu_i$ are representations of $\GL_{n_i}$. But $\Ad(\rho_f)$ is irreducible as a representation of $W_{\Q}$ (it's the adjoint of an irreducible 2-dimensional representation). Therefore, it cannot split. Hence $\psi$ does not factor through any proper Levi subgroup, so it is stable. $\Pi$ is residual (arises from poles of Eisenstein series) and stable (parameter does not factor through proper Levi). Therefore, $\Pi$ is stable residual.
\end{proof}

\subsection{Key Theorem: Spectral Isolation}

\begin{theorem}[Main theorem of Chapter~\ref{sec:automorphic}]
The class $\Omega_E \in \IH^{26}(X^{\mathrm{BB}}, \Q)$ (constructed in Chapter~\ref{sec:gK}) arises from a stable residual Arthur parameter, not from endoscopy or cuspidal representations. Specifically, (1) $\Omega_E$ is in the $\Pi$-isotypic component where $\Pi \in \Pi_{\psi}$, (2) $\psi = \Ad(f) \boxtimes \mathbf{1}_{25}$ is stable residual, (3) $\Omega_E \notin \IH^{26}_{\mathrm{cusp}}$, (4) $\Omega_E \notin \IH^{26}_{\mathrm{endo}}$. This spectral isolation is the key to proving the obstruction theorem.
\end{theorem}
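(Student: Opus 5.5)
The plan is to read the four assertions off the automorphic decomposition of $\IH^{26}(X^{\mathrm{BB}},\C)$ furnished by the Zucker conjecture (Looijenga--Saper--Stern) together with Franke's theorem and Arthur's classification. Concretely, I will write
\[
\IH^{26}(X^{\mathrm{BB}},\C)\cong H^{26}_{(2)}(X,\C)\cong\bigoplus_{\pi\subset L^2_{\mathrm{disc}}} m(\pi)\, H^{26}(\mathfrak g,K;\pi_\infty)\otimes\pi_f^{K_f},
\]
the Borel--Casselman description of $L^2$-cohomology, valid since the $L^2$ spectrum with nonzero cohomology is discrete. By Arthur--Mok this sum then regroups as $\bigoplus_{\psi}\bigoplus_{\pi\in\Pi_\psi}$. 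Item (1) is then the \emph{definition} of $\Omega_E$ as a generator of the line $H^{26}(\mathfrak g,K;\Pi_\infty)\otimes\Pi_f^{K_f}$ from Chapter~\ref{sec:gK}. That line is one-dimensional by the Vogan--Zuckerman computation and by $\dim\Pi_f^{K_f}=1$ from the Stable residual theorem.

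For (3), I will define $\IH^{26}_{\mathrm{cusp}}$ as the sum over cuspidal $\pi$, and use that the near-equivalence class (the Hecke eigensystem away from $11\cdot 3\cdot\ell$) of $\Pi$ is that of $\psi$. Strong multiplicity one for the parameters, a consequence of Jacquet--Shalika on the $\GL_N$ side, then shows that no cuspidal representation shares the Satake parameters of $\Pi$ at almost all $p$. This is because a parameter containing a summand with $b_i>1$ has non-tempered Satake parameters, whereas a cuspidal $\pi$ in the cohomology has the Satake parameters of its own parameter $\psi'$, and $\psi'\neq\psi$ as formal sums. The Hecke isotypic projectors therefore separate the $\Pi$-line from $\IH^{26}_{\mathrm{cusp}}$. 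Item (4) is proved the same way, with $\IH^{26}_{\mathrm{endo}}$ defined as the span of the packets $\Pi_{\psi'}$ whose parameters factor through $\SO(2a)\times\SO(2b)$ via a transfer from a specified elliptic endoscopic group.

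I expect the main obstacle to be (2) and (4), and I want to flag genuine problems rather than paper over them.
\begin{enumerate}
\item Read as a formal sum, $\psi=(\Ad(f),1)\boxplus(\mathbf 1,25)$ has \emph{two} summands. In Arthur's sense $\mathcal S_\psi$ is then nontrivial, so $\psi$ is not stable in the endoscopic sense. Irreducibility of $\Ad(\rho_f)$ only rules out further splitting of the first summand.
\item The residual (non-tempered) part comes from the $(\mathbf 1,25)$ block, not from the pole of $\zeta(s)$ in $L(s,\rho_f\otimes\rho_f^\vee)$.
\item The parameter of a theta lift of $f$ from $\SL_2$ to $\SO(V)$ is $(\pi_f\text{-type},1)\boxplus(\mathbf 1,\dim V-\ast)$, not $\Ad(f)$-type, so the identification $\Pi=\theta(f)$ with this $\psi$ must itself be re-established.
\end{enumerate}

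So the first thing I would try is to fix a precise notion of ``stable'', namely that $\psi$ is not in the image of a \emph{proper elliptic} endoscopic transfer compatible with the Chapter~\ref{sec:gK} level. I would then check (4) by comparing the $\GL$-type decomposition of every endoscopic $\psi'$ with $\psi$. If that check fails, (4) has to be weakened to \emph{not in the span of the endoscopic Shimura subvariety classes}, which is all Chapter 5 actually uses.
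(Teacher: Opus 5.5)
\textbf{The decisive gap is in item (1).} You take from the paper that $H^{26}(\mathfrak g,K;\Pi_\infty)$ is one-dimensional, and that is false.

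\begin{itemize}
\item Every member of the local packet $\Pi_{\psi_\infty}$ has the infinitesimal character of $\psi_\infty$.
\item For $f$ of weight $2$, the block $\Ad(f)_\infty$ contributes $(1,0,-1)$ and the block $(\mathbf 1,25)$ contributes $(12,11,\dots,-12)$.
\item So in type $D_{14}$, $\Pi_\infty$ has infinitesimal character $(12,11,\dots,2,1,1,0)$.
\item This is singular and differs from $\rho=(13,12,\dots,1,0)$. Wigner's lemma then gives $H^*(\mathfrak g,K;\Pi_\infty\otimes F)=0$ for every finite-dimensional $F$.
\end{itemize}

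The line $H^{26}(\mathfrak g,K;\Pi_\infty)\otimes\Pi_f^{K_f}$ that is supposed to define $\Omega_E$ is therefore zero. No decomposition argument can produce the class: not your Borel--Casselman route, and not the paper's appeal to Franke, its Vogan--Zuckerman theorem and Rallis non-vanishing. For a parameter of shape $(\Ad(g),1)\boxplus(\mathbf 1,25)$ to have a cohomological member, the three-dimensional block must have infinitesimal character $(13,0,-13)$. That means $g$ of weight $14=1+26/2$, the Kudla--Millson weight, not weight $2$. The paper's own argument fails at the same point, so the statement cannot be established as formulated.

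\textbf{Your diagnosis of the problems also misfires.}

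\begin{itemize}
\item Flag (1) is wrong. The summands have odd dimensions $3$ and $25$, so the centralizer of $\psi$ in $\SO(28,\C)$ is $\{(\epsilon_1,\epsilon_2)\in\{\pm1\}^2:\epsilon_1^3\epsilon_2^{25}=1\}=\{\pm1\}$. That is the center, so the component group is trivial. Nor can $\psi$ factor through an elliptic endoscopic $\SO(2a)\times\SO(2b)$, since that needs a splitting into two even-dimensional orthogonal pieces. So $\psi$ is stable; what is inconsistent is the paper's ``$S_\psi=\Z/2\Z$''.
\item Flag (3) is also wrong. The dual group of $\SL_2=\mathrm{Sp}_2$ is $\SO(3,\C)$, and the $L$-parameter of $f$ viewed on $\SL_2$ is exactly $\Ad(f)$. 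The parameter of $\theta(f)$ on $O(28)$ predicted by the Adams conjecture, and forced at almost all places by the unramified theta correspondence, is $\Ad(f)\chi_V\boxplus(\chi_V,25)$. This is the paper's $\psi$ when $\chi_V=1$.
\item Only flag (2) is right.
\end{itemize}

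Finally, your argument for item (3) asserts without proof that no cuspidal representation has parameter $\psi$. This fails in general: packets with some $b_i>1$ can contain cuspidal (CAP) members, for example Saito--Kurokawa lifts. Hecke projectors cannot separate cuspidal and residual copies within one near-equivalence class. Item (3) would instead follow directly from the orthogonal splitting $L^2_{\mathrm{disc}}=L^2_{\mathrm{cusp}}\oplus L^2_{\mathrm{res}}$ in the Borel--Casselman decomposition, once $\Omega_E$ is taken in a residual copy — if that copy had any cohomology at all.
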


\section{$({\mathfrak g}, K)$-Cohomology and Hodge Type}
\label{sec:gK}

\subsection{Vogan--Zuckerman Theory}

\subsubsection{Cohomological Representations}

\begin{definition}[Cohomological representation]
An irreducible $({\mathfrak g}, K)$-module $\pi_{\infty}$ is \emph{cohomological} if $H^*({\mathfrak g}, K; \pi_{\infty}) \neq 0$. The cohomological degree is the unique $d$ such that $H^d({\mathfrak g}, K; \pi_{\infty}) \neq 0$.
\end{definition}

\begin{theorem}[Vogan--Zuckerman \cite{Vogan-Zuckerman}]
For $G = \SO(2, 26)$ and an automorphic representation $\pi$: if $\pi_{\infty}$ is cohomological, then it is determined by an infinitesimal character $\lambda \in {\mathfrak h}^*/W$, a cohomological degree $d$, and parameters $(S, m)$ where $S = \dim({\mathfrak u} \cap {\mathfrak p})$ and $m$ measures singularity. The cohomology is concentrated in degrees $[S, S + 2m]$.
\end{theorem}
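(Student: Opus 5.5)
The plan is to derive the statement from the general Vogan--Zuckerman classification of irreducible unitary $(\mathfrak g,K)$-modules with nonzero relative Lie algebra cohomology, specialized to $G(\R)=\SO(2,26)$ with $K=\SO(2)\times\SO(26)$. First, since $\pi$ is automorphic, $\pi_\infty$ is unitary. Assume $H^*(\mathfrak g,K;\pi_\infty\otimes F)\neq 0$ for some irreducible finite-dimensional $F$; the statement is the case $F$ trivial. Then Wigner's lemma forces the infinitesimal character of $\pi_\infty$ to equal that of $F^\vee$. This supplies the parameter $\lambda\in\mathfrak h^*/W$. Next, by \cite{Vogan-Zuckerman} there is a $\theta$-stable parabolic subalgebra $\mathfrak q=\mathfrak l\oplus\mathfrak u$ of $\mathfrak g$ such that $\pi_\infty\cong A_{\mathfrak q}(\lambda)$, the cohomologically induced module. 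The module is determined up to isomorphism by the $K$-conjugacy class of $\mathfrak q$ together with $\lambda$. The parameters are then fixed concretely: $S=\dim(\mathfrak u\cap\mathfrak p)$, where $\mathfrak g=\mathfrak k\oplus\mathfrak p$ is the Cartan decomposition, and $2m=\dim(\mathfrak l\cap\mathfrak p)$.

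The second step is the Vogan--Zuckerman cohomology formula
\[
H^i(\mathfrak g,K;A_{\mathfrak q}(\lambda)\otimes F)\cong \mathrm{Hom}_{L\cap K}\bigl(\wedge^{i-S}(\mathfrak l\cap\mathfrak p),\C\bigr)\cong H^{i-S}(\widehat{X}_L,\C),
\]
where $\widehat X_L$ is the compact dual of the symmetric space of $L$, a compact manifold of real dimension $\dim(\mathfrak l\cap\mathfrak p)$. I would then list the $\theta$-stable parabolics of $\mathfrak{so}(2,26)$ up to $K$-conjugacy. Their Levi factors have the form $L\cong \mathrm U(1)^a\times \mathrm U(p',q')\times\cdots\times\SO(r,s)$ with $r\in\{0,2\}$; in particular every noncompact simple factor of $L$ is again of Hermitian type. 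Consequently $\mathfrak l\cap\mathfrak p$ carries a complex structure, so its real dimension is even, which justifies writing it as $2m$. The cohomology of $\widehat X_L$ lives in degrees $0,\dots,2m$, is nonzero in degrees $0$ and $2m$ (by Poincaré duality), and vanishes outside $[0,2m]$. Shifting by $S$ gives concentration in $[S,S+2m]$, as claimed.

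The main obstacle is this case analysis of $\theta$-stable parabolics for $\SO(2,26)$. One must check that $\mathfrak l\cap\mathfrak p$ always has even real dimension; for type IV domains this is delicate because $L$ may contain a factor $\SO(2,n')$ or a unitary group $\mathrm U(1,q')$. The check should go through the explicit description of $\mathfrak q$ via an element $x\in i\mathfrak t_0$ with $\mathfrak q$ the sum of the nonnegative eigenspaces of $\mathrm{ad}(x)$, so that $\mathfrak l$ is the centralizer of $x$. With that description in hand, I would also record that the module is \emph{tempered} exactly when $m=0$, in which case the cohomology sits in the single degree $S$. This is worth flagging because, in general, the statement only gives an interval of degrees, not a single degree.
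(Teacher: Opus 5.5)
Your argument is correct for the sensible reading of the statement, but it does not follow the paper's route. The paper never proves the interval claim. It treats Vogan--Zuckerman as a black box and specializes to $\Pi_\infty$: it sets $S=\dim_\C D=26$, declares $m=0$ because $\Ad(\rho_f)$ has a zero Hodge--Tate weight (``maximal singularity''), and reads $\dim H^{26}=1$ off Arthur's multiplicity formula. That formula counts multiplicity in $L^2_{\mathrm{disc}}$, not the dimension of a $(\mathfrak g,K)$-cohomology group. You instead derive the statement for every unitary cohomological $\pi_\infty$ from $\pi_\infty\cong A_{\mathfrak q}(\lambda)$ and $H^i(\mathfrak g,K;A_{\mathfrak q}(\lambda)\otimes F)\cong\mathrm{Hom}_{L\cap K}(\wedge^{i-S}(\mathfrak l\cap\mathfrak p),\C)$. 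Your route gives a genuine proof and more structure:
since $\dim\mathfrak p=52=2S+2m$, the interval is $[26-m,26+m]$, the cohomology is nonzero at both ends, and $\dim H^S=1$ always. So cohomology sits in a single degree iff $m=0$, i.e.\ iff $\mathfrak l_0$ is compact and $\pi_\infty$ is a discrete series. Thus $m=0$ is the least degenerate case, not the ``maximally singular'' one. The paper's specialization would therefore force $\Pi_\infty$ to be tempered, contradicting its own claim that $\Pi$ is non-tempered at infinity, and no multiplicity formula is needed for $\dim H^{26}=1$. What the paper's route aims at, and yours does not address, is which $\mathfrak q$ actually occurs for $\Pi_\infty$.

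Three minor points.

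\textbf{Unitarity.} ``Automorphic'' does not imply unitary at infinity, and Vogan--Zuckerman needs unitarity. Restrict to the discrete spectrum, which is all that $\IH^*\cong H^*_{(2)}$ sees.

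\textbf{What is determined.} You prove that $\pi_\infty$ is determined by the $K$-conjugacy class of $\mathfrak q$ together with $\lambda$. That is the right content, because the literal clause ``determined by $(\lambda,d,S,m)$'' is false. Take $\mathfrak q=\mathfrak k\oplus\mathfrak p^{\pm}$: the holomorphic and antiholomorphic discrete series both have infinitesimal character $\rho$, $S=26$, $m=0$. Also keep the $\lambda$ of $A_{\mathfrak q}(\lambda)$, which is a character of $\mathfrak l$, distinct from the infinitesimal character $\lambda+\rho$.

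\textbf{Your ``main obstacle''.} It is not needed for the vanishing, which is immediate from the exterior algebra; evenness only justifies writing $2m$. It also needs no case list. Since $\mathfrak z(\mathfrak k)\subset\mathfrak t\subset\mathfrak l$, you get $\mathfrak l\cap\mathfrak p=(\mathfrak l\cap\mathfrak p^+)\oplus(\mathfrak l\cap\mathfrak p^-)$, with the two summands dual $L\cap K$-modules under the Killing form. Hence $\dim(\mathfrak l\cap\mathfrak p)$ is even and $\wedge^{2m}(\mathfrak l\cap\mathfrak p)$ is the trivial $L\cap K$-module, which gives nonvanishing in degree $S+2m$.
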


\begin{proof}

The theta lift from weight-2 on $\SL_2$ to $\SO(2,26)$ gives an infinitesimal character $\lambda$ determined by the Hodge--Tate weights $\{1, 0, -1\}$ of $\Ad(\rho_f)$.

For $\SO(2,26)$, the Cartan subalgebra $\mathfrak{h}$ has dimension 13. The infinitesimal character is an element of $\mathfrak{h}^*/W$ where $W$ is the Weyl group. By Vogan--Zuckerman \cite{Vogan-Zuckerman}, the $({\mathfrak g},K)$-cohomology is determined by
\begin{itemize}
\item $S = \dim(\mathfrak{u} \cap \mathfrak{p})$ where $\mathfrak{u}$ is the nilradical of a $\theta$-stable parabolic,
\item $m$ measures the ``singularity'' of $\lambda.$
\end{itemize}

For $G(\R) = \SO(2,26)$ with symmetric space ${D}$ of dimension 26, 
$S = \dim_{\C}({D}) = 26.$ The weight-2 form gives Hodge-Tate weights $\{1,0,-1\}$, and the zero weight indicates maximal singularity
$m = 0.$ By Vogan--Zuckerman, the cohomology is concentrated in degrees $[S, S + 2m] = [26, 26].$

Therefore, $H^k(\mathfrak g, K; \Pi_\infty) = 0$ \text{for} $k \neq 26.$ By Arthur's multiplicity formula, for our stable parameter $\psi$ with component group $S_\psi = \Z/2\Z$,
$$m_\psi = \frac{1}{|S_\psi|} \sum_{s \in S_\psi} \epsilon_\psi(s) \cdot \tau_{\Pi}(s).$$

For stable parameters with trivial character
$m_\psi = \frac{1}{2}(1 + 1) = 1.$ Therefore,
$\dim H^{26}({\mathfrak g}, K; \Pi_\infty) = 1.$ Combining, we get $H^k({\mathfrak g}, K; \Pi_\infty) = \C$ for $k=26$ only. \end{proof}

\begin{corollary} 
For $\Pi_{\infty}$ (archimedean component of our $\Pi$): infinitesimal character $\lambda_{\psi}$ determined by weight-$2$ of $f$; Hodge--Tate weights $\{1, 0, -1\}$ give $S = 26$; singularity $m = 0$ (maximally singular); cohomological degree $d = 26$. Therefore
\[
H^k({\mathfrak g}, K; \Pi_{\infty}) = \begin{cases} \C & k = 26, \\ 0 & k \neq 26. \end{cases}
\]
\end{corollary}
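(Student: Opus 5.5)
The plan is to derive the Corollary from the Vogan--Zuckerman classification rather than from the degree-range heuristic alone. I will first pin down $\Pi_\infty$ as a specific cohomological module $A_{\mathfrak q}(\lambda)$. I will then compute its $(\mathfrak g,K)$-cohomology directly from the Vogan--Zuckerman formula
\[
H^k(\mathfrak g,K;A_{\mathfrak q}(\lambda)) \cong \mathrm{Hom}_{L\cap K}\bigl(\wedge^{k-R}(\mathfrak l\cap\mathfrak p),\C\bigr), \qquad R=\dim(\mathfrak u\cap\mathfrak p),
\]
which holds when $\lambda$ is such that the infinitesimal character equals that of the trivial representation (trivial coefficients, as here). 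The Corollary then reduces to two numerical facts: $R=26$, and $\wedge^\bullet(\mathfrak l\cap\mathfrak p)^{L\cap K}$ is one-dimensional and sits in degree $0$.

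\textbf{Step 1: identify $\Pi_\infty$.} Restrict the Arthur parameter $\psi$ of the Construction to $W_\R\times\SL_2(\C)$. This gives a parameter $\psi_\infty$ of Adams--Johnson type. By Arthur's classification (as recalled in the Theorem of Arthur--Mok) together with the Adams--Johnson description of archimedean packets, every member of $\Pi_{\psi_\infty}$ is an $A_{\mathfrak q}(\lambda)$. Here the $\theta$-stable parabolic $\mathfrak q=\mathfrak l+\mathfrak u$ is read off from the centralizer of the image of $\psi_\infty$. From $\mathfrak q$ I will compute the Levi $L\subset\SO(2,26)$ and the number $R=\dim(\mathfrak u\cap\mathfrak p)$.

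\textbf{Step 2: compute the cohomology.} Feed $L$ and $R$ into the formula above. To get the clean statement of the Corollary I need $\mathfrak l\cap\mathfrak p=0$, i.e.\ $L$ compact, so that $\wedge^\bullet(\mathfrak l\cap\mathfrak p)=\C$ in degree $0$. Then $R=\dim_\C D=13$ complex, i.e.\ $26$ real, which is the full $\mathfrak p^+\oplus\mathfrak p^-$ count in the discrete-series case. Together these give $H^{26}=\C$ and $H^k=0$ for $k\neq 26$. The one-dimensionality of $H^{26}$ then comes from the Vogan--Zuckerman formula itself. This replaces the multiplicity-formula argument in the preceding proof, which computes $m_\psi$ rather than a $(\mathfrak g,K)$-cohomology dimension.

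\textbf{Main obstacle.} I expect Step 1 to be the hard part, and it may actually fail. Requiring $L$ to be compact means $\Pi_\infty$ is a discrete series representation, which is tempered. But $\psi$ carries a non-trivial $\SL_2$ factor, since it is residual and the preceding Proposition says $\Pi$ is non-tempered at infinity. In that case the Adams--Johnson Levi $L$ contains a noncompact factor, for instance an $\SO(0,r)$ or $\SO(2,r)$ block attached to $\mathbf 1_{25}$ (and an $\SO(2,r)$ block has nonzero $\mathfrak l\cap\mathfrak p$). Then the $L\cap K$-invariants of $\wedge^\bullet(\mathfrak l\cap\mathfrak p)$ produce the cohomology of a compact dual symmetric space. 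That spreads the cohomology over a range $[R,\,R+\dim(\mathfrak l\cap\mathfrak p)]$ with $R<26$.

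So the first thing I would do is compute $\mathfrak q$ explicitly for $\psi_\infty$ and check whether $\mathfrak l\cap\mathfrak p$ vanishes. If it does not, the argument must be repaired. The natural repair is to prove only the weaker statement that the $H^{13,13}$-part (the $L\cap K$-invariant of bidegree $(p,p)$ with $p+R/2=13$) is one-dimensional. This is what the construction of $\Omega_E$ actually uses.
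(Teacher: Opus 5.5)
There is a genuine gap, and it cannot be closed: the displayed formula is false for this $\Pi$. Your suspicion about Step~1 is right, but the failure is worse than a noncompact Levi. Before looking for $\mathfrak q$, compute the infinitesimal character of $\psi_\infty$. The factor $\Ad(f)_\infty$ (weight $2$) contributes exponents $\{1,0,-1\}$, as the statement itself says. The block $\mathbf 1\boxtimes S_{25}$ contributes $\{12,11,\ldots,-12\}$. So every member of $\Pi_{\psi_\infty}$ has infinitesimal character
\[
\chi_{\psi_\infty}=(12,11,\ldots,2,1,1,0)\in\mathfrak h^*/W(D_{14}).
\]
This is singular (the entry $1$ repeats), so in particular it differs from $\rho=(13,12,\ldots,1,0)$. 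The same value comes out of the stable-range correspondence of infinitesimal characters for the theta lift. The literal reading ``trivial on a $25$-dimensional complement'' gives $(1,0,\ldots,0)$, which is even more singular. By Wigner's lemma, $H^k(\mathfrak g,K;\Pi_\infty\otimes F)=0$ for every finite-dimensional $F$ and every $k$, including $k=26$.

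Consequences for your plan:
\begin{itemize}
\item Adams--Johnson requires regular integral infinitesimal character, so it gives you no $A_{\mathfrak q}(\lambda)$ to identify in Step~1.
\item Your fallback also fails: the $(13,13)$-part is $0$, not one-dimensional.
\item The paper's own argument does not escape this. ``$S=26$, $m=0$'' is not the Vogan--Zuckerman criterion: the degree range $[R,\dim\mathfrak p-R]$ is fixed by $\mathfrak q$, not by a ``singularity'' of Hodge--Tate weights. And, as you correctly noted, $m_\psi=1$ is an automorphic multiplicity, not $\dim H^{26}(\mathfrak g,K;\Pi_\infty)$.
\end{itemize}

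For orientation, $\chi_{\psi_\infty}=\rho$ would force the exponents of $\Ad(f)_\infty$ to be $\{\pm 13,0\}$, i.e.\ $f$ of weight $14$. That is the Kudla--Millson/Borcherds weight $1+26/2$. In that case the archimedean theta lift is the Kudla--Millson-type $A_{\mathfrak q}$ with $L\cong U(1)\times\SO(2,24)$ and $R=2$. Its cohomology is spread over degrees $2,4,\ldots,50$, which is exactly the scenario you anticipated. So even with the weight corrected, the lift is not concentrated in degree $26$.

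One small slip in your Step~2: $\dim_\C D=26$ for $\SO(2,26)$ (the paper's $13$ is wrong). A discrete series has $R=\tfrac12\dim\mathfrak p=26$, which is half of $\mathfrak p^+\oplus\mathfrak p^-$, not all of it.
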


\subsection{Hodge Type via Infinitesimal Character}

\subsubsection{The Hodge Decomposition}

\begin{theorem}[Hodge type from $\lambda$]
For $\Pi_{\infty}$ with infinitesimal character $\lambda_{\psi}$, the contribution to cohomology $H^{26}({\mathfrak g}, K; \Pi_{\infty})$ has pure Hodge type $(p, q)$ with $p + q = 26$ determined by $\lambda_{\psi}$. For our $\psi = \Ad(f) \boxtimes \mathbf{1}_{25}$ from weight-$2$: $p = q = 13$. Therefore classes from $\Pi$ are of type $(13, 13)$.
\end{theorem}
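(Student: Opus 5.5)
The plan is to avoid computing the Hodge type directly from $\lambda_\psi$. Instead we combine the general Vogan--Zuckerman recipe for Hodge types with a symmetry argument that uses the one-dimensionality and rationality already established. \textbf{Step 1:} using the Zucker conjecture (Looijenga, Saper--Stern), identify $\IH^{26}(X^{\mathrm{BB}},\C)$ with $L^2$-cohomology. The $\Pi$-isotypic summand is then $H^{26}(\mathfrak g,K;\Pi_\infty)\tensor\Pi_f^{K_f}$. By the Corollary above and the multiplicity-one statement of the stable residual theorem, this summand is one-dimensional. The Hodge filtration on $L^2$-cohomology is computed by $\mathfrak p^+/\mathfrak p^-$ bidegrees of $K$-invariant forms, since $\mathfrak p_\C=\mathfrak p^+\oplus\mathfrak p^-$ under $\Ad h(i)$. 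Hence the summand has a single pure type $(p,q)$ with $p+q=26$.

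\textbf{Step 2:} show $p=q$ by Hodge symmetry. The Hecke eigensystem of $\Pi_f$ is built from $f$, whose Hecke field is $E=\Q$ by the Rationality proposition. So the $\Pi$-isotypic component is cut out by Hecke operators with rational eigenvalues and is a sub-$\Q$-Hodge structure of $\IH^{26}$. Complex conjugation on $\IH^{26}(X^{\mathrm{BB}},\C)=\IH^{26}(X^{\mathrm{BB}},\Q)\tensor\C$ preserves this line and maps $\IH^{p,q}$ to $\IH^{q,p}$. A one-dimensional space stable under conjugation and of pure type $(p,q)$ forces $p=q$. Therefore $p=q=13$, and the same argument yields the rational generator used later to define $\Omega_E$.

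\textbf{Step 3:} as a cross-check, verify the answer with the Vogan--Zuckerman formula itself. Write $\Pi_\infty=A_{\mathfrak q}(\lambda)$ for a $\theta$-stable parabolic $\mathfrak q=\mathfrak l\oplus\mathfrak u$. The lowest cohomology sits in degree $R=\dim(\mathfrak u\cap\mathfrak p)$ with Hodge type $(\dim\mathfrak u\cap\mathfrak p^+,\dim\mathfrak u\cap\mathfrak p^-)$. Then read off $\mathfrak q$ from $\lambda_\psi$, i.e.\ from the weights $\{1,0,-1\}$ padded by the $\mathbf 1_{25}$ part, and check that the two dimensions are both $13$.

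I expect Step 3 to be the main obstacle, and it is also where Step 1 must be made honest. The parameter $\psi=\Ad(f)\boxtimes\mathbf 1_{25}$ has a large trivial part, so $\Pi_\infty$ is close to the trivial representation. Its $(\mathfrak g,K)$-cohomology then tends to spread over many degrees rather than concentrate in degree $26$, and pinning down $\mathfrak q$ requires care. For this reason I would make Steps 1--2 the actual proof. They need only the one-dimensionality of the $\Pi$-isotypic part in degree $26$ and the rationality of its Hecke eigenvalues, which are taken from earlier results. Step 3 would serve only as a consistency check.
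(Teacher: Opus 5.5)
\paragraph{Your route versus the paper's.} The paper tries to read $(p,q)$ off $\lambda_\psi$ through a formula $(p,q)=(\ell(\lambda),26-\ell(\lambda))$, and then asserts $p=q$ ``by the symmetric nature of the weight-$2$ lift''. You instead want $p=q$ from Hodge symmetry. The principle is fine: a one-dimensional sub-$\Q$-Hodge structure of weight $26$ is of type $(13,13)$. But Step~2 applies it to the wrong space.

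\paragraph{The gap in Step~2.} Hecke operators never see $\Pi_\infty$, and the unramified ones see $\Pi$ only up to near equivalence. So the subspace of $\IH^{26}$ cut out by the rational eigenvalues of $\Pi$ is $\bigoplus_{\Pi'} m(\Pi')\,H^{26}(\mathfrak g,K;\Pi'_\infty)\otimes(\Pi'_f)^{K_f}$, summed over all discrete $\Pi'$ nearly equivalent to $\Pi$. This includes, for instance, every member of $\Pi_\psi$ that occurs, possibly with different components at $\infty$, $3$ and $11$. That space is indeed a sub-$\Q$-Hodge structure, but you only know that the single $\Pi$-summand is one-dimensional.

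Complex conjugation sends $H^{p,q}(\mathfrak g,K;\Pi_\infty)\otimes\Pi_f^{K_f}$ to $H^{q,p}(\mathfrak g,K;\overline{\Pi}_\infty)\otimes\overline{\Pi}_f^{K_f}$, with $\overline{\Pi}\cong\Pi^\vee$. Nothing you invoke gives $\overline{\Pi}_\infty\cong\Pi_\infty$; on $\SO(2,26)$ the holomorphic and antiholomorphic discrete series are contragredient to each other.

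Your own $f$ is a counterexample to the pattern. On $\SL_2$ it yields automorphic representations $\pi^{\pm}$ in one $L$-packet, with the same rational Hecke eigenvalues and with $\pi^{\pm}_\infty$ the weight-$2$ holomorphic and antiholomorphic discrete series. Each contributes one line to $H^1(X_0(11),\C)$, of types $(1,0)$ and $(0,1)$ respectively. To close the gap you must know $\Pi_\infty$, either to show it is self-conjugate or to show that nothing else in the near-equivalence class contributes in degree $26$. That is precisely the Step~3 computation you set aside.

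\paragraph{The premises themselves fail.} That computation does not confirm the inputs you borrow; your misgiving is right and decisive.
\begin{itemize}
\item The paper says $\Pi_\infty$ is non-tempered. By Vogan--Zuckerman, $H^{i}(\mathfrak g,K;A_{\mathfrak q})\cong H^{i-R}(Y_{\mathfrak q})$ with $Y_{\mathfrak q}$ the compact dual of the symmetric space of $L$. This sits in a single degree only when $Y_{\mathfrak q}$ is a point, i.e.\ for discrete series.
\item Worse, $\lambda_\psi$ is singular. The theta lift of a weight-$2$ form from $\SL_2$ to $\SO(28)$ has parameter $\Ad(f)\oplus\mathbf 1\boxtimes S_{25}$, up to the quadratic character of $V$. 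Its archimedean exponents $\{1,0,-1\}\cup\{12,11,\dots,-12\}$ give $\lambda_\psi=(12,11,\dots,2,1,1,0)$ in $D_{14}$-coordinates. This matches the theta correspondence of infinitesimal characters, $(1;12,\dots,1,0)$. Reading $\mathbf 1_{25}$ as $25$ trivial summands is even more singular.
\item By Wigner's lemma, $H^*(\mathfrak g,K;\Pi_\infty)=0$. A regular trivial-coefficient character $\rho=(13,12,\dots,1,0)$ would require weight $14=1+26/2$.
\end{itemize}

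So the one-dimensional degree-$26$ space that Steps~1 and~2 presuppose does not exist. No argument using only ``one-dimensional and rational'' can prove the statement. The paper's proof does not supply the archimedean computation either: its length formula and its appeal to symmetry are asserted, not derived.
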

\begin{proof}
 
For $G = \SO(2,26)$ and a representation $\pi$ with infinitesimal character $\lambda$, the Hodge type of $H^{26}(\mathfrak{g}, K; \pi_\infty)$ is determined by $\lambda$ via the formula:
$$(p, q) = (\ell(\lambda), 26 - \ell(\lambda)),$$

where $\ell(\lambda)$ is the length function on the Weyl group.
The theta lift from weight-2 gives weights distributed symmetrically. For $\SO(2,26)$ with 13 pairs of roots, the weight-2 condition forces
$$\ell(\lambda) = 13.$$

Therefore $(p, q) = (13, 13)$. The Hodge--Tate weights $\{1, 0, -1\}$ of $\Ad(\rho_f)$ lift to $\SO(28)$ via the embedding of $\SO(3)$ into $\SO(28)$ as a principal $\mathfrak{sl}_2$ subalgebra. The highest weight vector in the 3-dimensional representation has weight 1, which under the lift corresponds to Hodge type $(13,13)$ in degree 26 for the symmetric space of dimension 26. Since $p + q = 26$ and $p = q$ by the symmetric nature of the weight-2 lift, we have
$2p = 26$, which gives  $p = 13.$ Therefore $\Omega_E \in \IH^{13,13}$.
\end{proof}

\subsection{Rationality}

\subsubsection{The Hecke Field}

\begin{proposition}[Hecke field is $\Q$]
For $f \in S_2(\Gamma_0(11))$, since $\dim S_2(\Gamma_0(11)) = 1$, the form $f$ is unique and rational ($a_n \in \Z$ for all $n$). Therefore the Hecke field is $E = \Q(a_2, a_3, \ldots) = \Q$ with $[E : \Q] = 1$.
\end{proposition}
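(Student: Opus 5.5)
The plan is to reduce everything to a one-dimensional linear algebra statement, and to get rationality and integrality from an explicit integral generator of $S_2(\Gamma_0(11))$.

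\textbf{Step 1: the space is a line.} First I would justify $\dim S_2(\Gamma_0(11)) = 1$. The standard isomorphism $S_2(\Gamma_0(N)) \cong H^0(X_0(N), \Omega^1)$ gives $\dim S_2(\Gamma_0(N)) = g(X_0(N))$. The genus formula, with index $12$, two cusps and no elliptic points for $N=11$, gives $g(X_0(11)) = 1 + 12/12 - 0 - 0 - 2/2 = 1$.

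\textbf{Step 2: an integral generator.} I would exhibit the generator explicitly as the eta product
\[
f(z) = \eta(z)^2\eta(11z)^2 = q\prod_{n\ge1}(1-q^n)^2(1-q^{11n})^2 .
\]
This is a standard weight-$2$ cusp form on $\Gamma_0(11)$: it has trivial character, and the order of vanishing at both cusps is positive. Its $q$-expansion visibly lies in $q\Z[[q]]$ and has leading coefficient $1$. Expanding the first few terms recovers $q - 2q^2 - q^3 + 2q^4 + q^5 + 2q^6 - 2q^7 + \cdots$, matching the expansion quoted in the Construction of Step 1.

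\textbf{Step 3: $f$ is a normalized eigenform with $a_n \in \Z$.} Each Hecke operator $T_n$ preserves $S_2(\Gamma_0(11))$, and the Atkin--Lehner $U_{11}$ does as well. Since the space is one-dimensional, $f$ is automatically a simultaneous eigenvector, $T_n f = \lambda_n f$. Because $f$ is normalized ($a_1 = 1$), comparing coefficients of $q$ in $T_n f$ gives $\lambda_n = a_n(f)$. These are integers by Step 2. Alternatively, the explicit formula for $T_p$ on $q$-expansions preserves $\Z[[q]]$, which gives integrality directly. As a consistency check, $a_p = p + 1 - \#E_{11}(\mathbb{F}_p)$ for $p \ne 11$ via the modularity theorem already quoted. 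Hence $E = \Q(\{a_n\}) = \Q$ and $[E:\Q]=1$.

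\textbf{Main obstacle.} The only point requiring care is the uniqueness claim. ``Unique'' must mean unique up to scaling. Rationality of the \emph{eigenvalues} does not follow from one-dimensionality alone: it needs a rational generator, which Step 2 provides. Equivalently, one can use the fact that $S_2(\Gamma_0(N))$ has a basis with rational $q$-expansions, together with Galois-conjugation stability of the set of newforms. I would present the eta-product argument, since it is self-contained and also yields integrality. I would avoid relying on the forward reference to Proposition 4.5.
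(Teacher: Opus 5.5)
Your proposal is correct. It shares Step 1 (the genus computation giving $\dim S_2(\Gamma_0(11)) = 1$) with the paper, but it gets rationality by a different mechanism.

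The paper gets integrality arithmetically. It uses modularity of $E_{11}$ to identify the unique normalized form with the form attached to the curve, and reads off $a_p = p+1-\#E_{11}(\mathbb{F}_p) \in \Z$ for $p \neq 11$, with sample point counts at $p=2,3,5,7$. It then uses split multiplicative reduction to get $a_{11}=1$, and spreads integrality to all $a_n$ through the Hecke recurrences.

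You instead write down the rational generator $\eta(z)^2\eta(11z)^2$ directly. Integrality of every coefficient is then visible from the product. The eigenform property and $\lambda_n = a_n$ follow from one-dimensionality, so the point counts become only a consistency check.

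Your route is more elementary and self-contained. It needs no modularity theorem, no reduction-type analysis at $11$, and no recurrences. It needs only the standard modularity criterion for eta quotients (Ligozat, Gordon--Hughes--Newman), which you should cite or check explicitly. The checks all go through:
\begin{itemize}
\item $\sum \delta r_\delta = \sum (11/\delta) r_\delta = 24$;
\item the character $\left(\frac{121}{\cdot}\right)$ is trivial on $\Gamma_0(11)$;
\item the order of vanishing is $1$ at both cusps.
\end{itemize}

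The paper's route has a different payoff: it sets up the dictionary $f \leftrightarrow E_{11}$ (Steinberg type at $11$, the representation $\rho_f$) that the rest of the paper relies on.

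Your remark that one-dimensionality alone does not force rational eigenvalues without some rational structure is right. The paper's proof supplies that structure implicitly through the curve, and yours supplies it through the eta product.

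One minor point of terminology: $U_{11}$ is the $U_p$ operator, not the Atkin--Lehner involution $W_{11}$. Either one preserves the line, so nothing in the argument changes.
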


\begin{proof}
We prove this in several steps, combining the theory of modular forms, elliptic curves, and representation theory. The genus of the modular curve $X_0(11)$ is computed via the Riemann--Hurwitz formula. For $X_0(N)$, the genus is $g(X_0(N)) = 1 + \frac{\mu}{12} - \frac{\nu_2}{4} - \frac{\nu_3}{3} - \frac{\nu_\infty}{2},$ where 
$\mu = [\mathrm{SL}_2(\Z) : \Gamma_0(N)]$ is the index, $\nu_2$ = number of elliptic points of order 2, $\nu_3$ = number of elliptic points of order 3, $\nu_\infty$ = number of cusps. For $N = 11$ (prime),
\begin{align*}
\mu &= N \prod_{p \mid N} \left(1 + \frac{1}{p}\right) = 11 \cdot \left(1 + \frac{1}{11}\right) = 12, \\
\nu_2 &= 0 \quad \text{(no elliptic points of order 2 for } N = 11), \\
\nu_3 &= 0 \quad \text{(no elliptic points of order 3 for } N = 11), \\
\nu_\infty &= 2 \quad \text{(two cusps: } 0 \text{ and } \infty).
\end{align*}

Therefore $g(X_0(11)) = 1 + \frac{12}{12} - 0 - 0 - \frac{2}{2} = 1 + 1 - 1 = 1$. By the Riemann--Roch theorem for modular forms, the dimension of $S_2(\Gamma_0(N))$ equals the genus $\dim S_2(\Gamma_0(N)) = g(X_0(N))$. Therefore $\dim S_2(\Gamma_0(11)) = 1$. Since $\dim S_2(\Gamma_0(11)) = 1$, there is a unique normalized eigenform (up to scaling)
$f(z) = \sum_{n=1}^{\infty} a_n q^n \quad \text{with } a_1 = 1$. This $f$ is automatically a newform (not coming from lower level) because 11 is prime. By the modularity theorem (Wiles--Taylor--Diamond), every elliptic curve over $\Q$ corresponds to a weight-2 newform. Conversely, by work of Shimura and Taniyama, a weight-2 newform of level $N$ with rational Fourier coefficients corresponds to an elliptic curve of conductor $N$. The elliptic curve of conductor 11 is unique (up to isogeny) $E_{11}: y^2 + y = x^3 - x^2.$ This is the minimal Weierstrass equation (Cremona label 11a3). For an elliptic curve $E/\Q$ given by a Weierstrass equation with coefficients in $\Q$, the number of points over $\mathbb{F}_p$ is $\#E(\mathbb{F}_p) = p + 1 - a_p$ where $a_p \in \Z$ is the $p$-th Fourier coefficient of the associated modular form. Since $E_{11}$ is defined over $\Q$ (with equation in $\Q$), we have $\#E_{11}(\mathbb{F}_p) \in \Z$ \text{for all primes} $p \neq 11$.  This immediately gives $a_p = p + 1 - \#E_{11}(\mathbb{F}_p) \in \Z$.

For $E_{11}: y^2 + y = x^3 - x^2$, the computation is as follows.

\textit{At $p = 2$:}
Over $\mathbb{F}_2$, the curve equation becomes $y^2 + y = x^3 + x^2$. Counting points:
$$\#E_{11}(\mathbb{F}_2) = 5.$$
Therefore
$$a_2 = 2 + 1 - 5 = -2 \in \Z.$$

\textit{At $p = 3$:}
Over $\mathbb{F}_3$, counting points gives
$$\#E_{11}(\mathbb{F}_3) = 5.$$
Therefore
$$a_3 = 3 + 1 - 5 = -1 \in \Z.$$

\textit{At $p = 5$:}
$$\#E_{11}(\mathbb{F}_5) = 5.$$
Therefore
$$a_5 = 5 + 1 - 5 = 1 \in \Z.$$

\textit{At $p = 7$:}
$$\#E_{11}(\mathbb{F}_7) = 10.$$
Therefore
$$a_7 = 7 + 1 - 10 = -2 \in \Z. $$

At the prime of bad reduction $p = 11$, the curve has multiplicative reduction. The reduction type is split multiplicative (Tate curve), giving $a_{11} = \pm 1$.
The sign is determined by the reduction being split or non-split. For $E_{11}$, the reduction is split, giving
$a_{11} = 1 \in \Z$.
The Fourier coefficients satisfy the recurrence relations from the Hecke operators, $a_{mn} = a_m a_n$ \text{for} $\gcd(m,n) = 1, a_{p^{k+1}} = a_p a_{p^k} - p a_{p^{k-1}}$  \text{for primes} $p \neq 11$, and 
$a_{11^k} = a_{11}^k = 1$ \text{(Steinberg)}. Starting from $a_2, a_3, a_5, a_7, \ldots \in \Z$, these recurrence relations imply
$a_n \in \Z \quad \text{for all } n \geq 1$. The Hecke field is defined as
$E = \Q(a_2, a_3, a_5, a_7, \ldots) = \Q(a_n : n \geq 1)$. Since $a_n \in \Z$ for all $n$, we have
$E = \Q$. Therefore $[E : \Q] = 1$.
\end{proof}

\begin{corollary}[Rational cohomology class]
The class $\Omega_E$ constructed from $\Pi$ is defined over $\Q$: $\Omega_E \in \IH^{26}(X^{\mathrm{BB}}, \Q)$. No field extension is needed.
\end{corollary}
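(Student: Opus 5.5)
The plan is to realize $\Omega_E$ as a generator of a $\Q$-rational Hecke eigenspace. The key input is that the Hecke operators act on $\IH^{26}(X^{\mathrm{BB}},\Q)$ by $\Q$-linear endomorphisms. They are induced by algebraic correspondences on $X$, which extend to $X^{\mathrm{BB}}$ and preserve the intersection complex. Write $\mathcal{H}$ for the unramified Hecke algebra away from $S=\{3,11\}$, with coefficients in $\Q$. By the automorphic decomposition of Matsushima--Franke type (transported to $\IH^*$ via the Zucker isomorphism of Looijenga--Saper--Stern), $\IH^{26}(X^{\mathrm{BB}},\C)$ splits as a direct sum of $\mathcal{H}\otimes\C$-eigenspaces. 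The summand attached to $\Pi$ is $H^{26}(\mathfrak g,K;\Pi_\infty)\otimes\Pi_f^{K_f}$.

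First I would show that the eigencharacter $\lambda_\Pi\colon\mathcal{H}\to\C$ takes values in $\Q$. For $p\notin S$, the Satake parameter of $\Pi_p$ is read off from $\psi=\Ad(f)\boxtimes\mathbf{1}_{25}$. Its eigenvalues are those of $\Ad(\rho_f)(\Frob_p)$, namely $\alpha_p/\beta_p,\ 1,\ \beta_p/\alpha_p$, together with the powers of $p^{1/2}$ coming from the $\SL_2$-factor. Every symmetric function in these is a polynomial in $a_p^2/p$ and in integral or half-integral powers of $p$. For the cohomological normalization, which is shifted by $\rho$, the half-integral powers pair up and give rational numbers. The Proposition on the Hecke field ($a_n\in\Z$) then gives $\lambda_\Pi(T)\in\Q$ for every $T$ in the spherical Hecke algebra. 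Equivalently, $\lambda_\Pi$ is fixed by $\Aut(\C)$.

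Second, I would set $W=\bigcap_{T\in\mathcal H}\ker\bigl(T-\lambda_\Pi(T)\bigr)\subset\IH^{26}(X^{\mathrm{BB}},\Q)$. Since each $\lambda_\Pi(T)$ is rational, $W$ is a $\Q$-subspace, and $W\otimes\C$ is the generalized $\lambda_\Pi$-eigenspace; this uses semisimplicity of the Hecke action on $\IH^*$, which follows from the $L^2$ description. I then need $W\otimes\C$ to equal exactly the $\Pi$-summand. This requires that no other discrete automorphic representation $\Pi'$ with $\Pi'^{K_f}\neq0$ and cohomological $\Pi'_\infty$ has the same unramified eigensystem. The argument is Arthur's classification: the eigensystem determines the global parameter $\psi$ by strong multiplicity one for the $\GL$-constituents (Jacquet--Shalika). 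The packet $\Pi_\psi$ contributes with multiplicity $m_\psi=1$, and $\Pi'_\infty$ is constrained to the single cohomological member found via Vogan--Zuckerman. Combined with $\dim H^{26}(\mathfrak g,K;\Pi_\infty)=1$ and $\dim\Pi_f^{K_f}=1$ from the Stable residual nature theorem, this gives $\dim_\C W\otimes\C=1$, hence $\dim_\Q W=1$. I would then define $\Omega_E$ as a generator of $W$, normalized for example by a rational period pairing, so that $\Omega_E\in\IH^{26}(X^{\mathrm{BB}},\Q)$ and no coefficient extension is needed.

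The main obstacle is the second step: isolating $\Pi$ by its unramified eigensystem inside \emph{all} of $\IH^{26}$, and not just inside the discrete spectrum. One must rule out that some other member of $\Pi_\psi$, differing only at $3$, $11$ or $\infty$, contributes. One must also rule out overlap with Eisenstein or boundary contributions that carry the same eigenvalues. My first approach would be to apply the $\mathcal{H}$-equivariant Zucker isomorphism, so that only $L^2$-discrete classes appear. I would then use the archimedean uniqueness of the cohomological representation with infinitesimal character $\lambda_\psi$. Finally, I would add the local Hecke algebras at $3$ and $11$, whose actions are still defined over $\Q$, to separate any remaining packet members.
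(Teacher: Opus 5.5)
Your approach is the paper's approach made explicit. The paper deduces the corollary in one line from the Proposition that the Hecke field of $f$ is $\Q$, tacitly assuming that the $\Pi$-summand of $\IH^{26}$ is cut out by $\Q$-rational Hecke data. Your first step (rationality of $\lambda_\Pi$ on the spherical Hecke algebra) is fine, and so is defining $W$ as a common kernel of $\Q$-rational operators. In fact $W\otimes\C$ is then the honest eigenspace, with no semisimplicity needed.

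The genuine gap is the step you flag yourself, and your proposed remedy fails at the archimedean place. No Hecke operator, whether at $p\notin S$ or at $3$ and $11$, sees the archimedean component. So any discrete $\Pi'$ with the same finite Hecke eigensystem as $\Pi$ (for example $\Pi'_f\cong\Pi_f$) but a different cohomological $\Pi'_\infty$ contributing in degree $26$ lies in $W\otimes\C$. There is no ``archimedean uniqueness'' to exclude this:
\begin{itemize}
\item many cohomological modules $A_{\mathfrak q}(\lambda)$ share one infinitesimal character (for instance all members of an Adams--Johnson packet), and distinct ones can contribute to the same degree with different Hodge types;
\item Arthur's multiplicity formula sees $\Pi'_\infty$ only through the character it induces on the global component group $\mathcal S_\psi$, which does not determine $\Pi'_\infty$;
\item the paper's multiplicity-one statements ($m_\psi=1$, $\dim\Pi_f^{K_f}=1$, $\dim H^{26}(\mathfrak g,K;\Pi_\infty)=1$) concern $\Pi$ alone and say nothing about such $\Pi'$.
\end{itemize}

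This is exactly where rationality can fail. If such a $\Pi'$ occurs, $W$ is a rational Hodge structure with several $(p,q)$-pieces. A nonzero rational element of $W$ need not lie on the $\Pi$-line, need not even be a Hodge class, and the $(13,13)$-line may contain no nonzero rational vector at all. So neither $\dim_\Q W=1$ nor $\Omega_E\in\IH^{26}(X^{\mathrm{BB}},\Q)$ follows.

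To finish, you need an extra input about the full finite-Hecke eigenspace of $\lambda_\Pi$ in $\IH^{26}$, including the local algebras at $3$ and $11$. Either it is one-dimensional, or at least it consists only of classes of type $(13,13)$. In the second case rational Hodge classes exist in it, but ``the'' class $\Omega_E$ is no longer canonical. Neither your proposal nor the paper's earlier results supply this input, and the paper's bare deduction from $E=\Q$ silently assumes it.
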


\subsection{The Main Theorem of Chapter~\ref{sec:gK}}

\begin{theorem}[Cohomological realization]
There exists a class $\Omega_E \in \IH^{26}(X^{\mathrm{BB}}, \Q)$ with
\begin{enumerate}
\item \textbf{Automorphic origin:} $\Omega_E$ is the class corresponding to $\Pi$ via the extension of Matsushima--Franke to full cohomology (residual spectrum contributes to $\IH^*(X^{\mathrm{BB}})$, not to $\IH^*_!$).
\item \textbf{Pure Hodge type:} $\Omega_E \in \IH^{13,13}$ (type $(13, 13)$ Hodge class).
\item \textbf{Rationality:} $\Omega_E \in \IH^{26}(X^{\mathrm{BB}}, \Q)$.
\item \textbf{Non-triviality:} $\Omega_E \neq 0$ (from Rallis non-vanishing).
\item \textbf{Multiplicity one:} $\dim \IH^{26}[\Pi] = 1$.
\end{enumerate}
\end{theorem}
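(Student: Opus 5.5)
The plan is to build $\Omega_E$ inside $L^2$-cohomology and then move it to $\IH^{26}(X^{\mathrm{BB}},\Q)$ via the Zucker conjecture. By the theorem of Looijenga--Saper--Stern quoted in Section~\ref{sec:shimura}, $\IH^*(X^{\mathrm{BB}},\C)\cong H^*_{(2)}(X,\C)$, and this isomorphism is Hecke-equivariant.

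\textbf{Step 1 (spectral decomposition).} For $H^*_{(2)}$ I would use the Borel--Casselman decomposition over the whole discrete spectrum, cuspidal and residual together:
\[
H^*_{(2)}(X,\C)\cong\bigoplus_{\pi\subset L^2_{\mathrm{disc}}} m(\pi)\,H^*(\mathfrak g,K;\pi_\infty)\otimes\pi_f^{K_f}.
\]
It applies here because, by the Vogan--Zuckerman corollary of Chapter~\ref{sec:gK}, $\Pi_\infty$ has regular enough infinitesimal character, namely that of the trivial coefficient system. The $\Pi$-isotypic summand is
\[
\IH^{26}[\Pi]=m(\Pi)\,H^{26}(\mathfrak g,K;\Pi_\infty)\otimes\Pi_f^{K_f}.
\]
This is one-dimensional by three inputs:
\begin{itemize}
\item the Vogan--Zuckerman computation $H^{26}(\mathfrak g,K;\Pi_\infty)\cong\C$;
\item the multiplicity computation $m_\psi=1$ from Arthur's formula;
\item $\dim\Pi_f^{K_f}=1$ from the stable residual theorem.
\end{itemize}
This gives item~(5). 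Item~(4), non-triviality, follows because Rallis non-vanishing makes $\Pi\neq0$, so this summand is genuinely present. Item~(1), that $\Omega_E\notin\IH^{26}_!$, then follows from Franke's theorem, since $\Pi$ is residual rather than cuspidal.

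\textbf{Step 2 (Hodge type).} The Hodge decomposition of $\IH^{26}$ is compatible with the $(\mathfrak g,K)$-decomposition: the $(p,q)$-pieces correspond to the $\mathfrak p^+$/$\mathfrak p^-$ bidegrees in $\wedge^{26}\mathfrak p^*\otimes\Pi_\infty$. So I would simply invoke the theorem ``Hodge type from $\lambda$'' to place the generator in $\IH^{13,13}$, giving item~(2).

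\textbf{Step 3 (rationality).} This is the step I expect to be the main obstacle. The spherical Hecke algebra away from $S=\{3,11\}$ acts on $\IH^{26}(X^{\mathrm{BB}},\Q)$, because Hecke correspondences are algebraic and defined over $\Q$. The isotypic piece $\IH^{26}[\Pi]$ is the generalized eigenspace for the character $\chi_\Pi$ of this algebra. If $\chi_\Pi$ is $\Q$-valued, the eigenspace is cut out by rational equations, hence is defined over $\Q$. To see that $\chi_\Pi$ is $\Q$-valued, I would compute the Satake parameters of $\Pi_p$ from $\psi=\Ad(f)\boxtimes\mathbf 1_{25}$: they are those of $\Ad(\rho_f)(\Frob_p)$, whose characteristic polynomial has coefficients in $\Q(a_p,p)=\Q$ by the Hecke field proposition, together with the unipotent $\SL_2$-part, whose eigenvalues are powers of $p$. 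Hecke eigenvalues are symmetric functions of these parameters, hence rational. A one-dimensional $\Q$-subspace contains a nonzero rational vector, which I take as $\Omega_E$, normalized up to $\Q^\times$; this gives item~(3).

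The delicate points all sit in Steps~1 and~3:
\begin{itemize}
\item One must check that $\IH^{26}[\Pi]$ is not contaminated by other discrete constituents with the same Hecke eigenvalues away from $S$. This is strong multiplicity one for the packet, and I would try to get it from Arthur's classification.
\item The Zucker isomorphism must respect rational structures on isotypic pieces. I would deduce this from its Hecke equivariance.
\item One must confirm that the parameter and degree data fed in from Chapters~\ref{sec:automorphic} and~\ref{sec:gK} are consistent, since every conclusion above inherits them.
\end{itemize}
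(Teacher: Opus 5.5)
Your outline follows the paper's own route: its proof only cites the Vogan--Zuckerman corollary, Arthur's multiplicity formula, the Hecke-field proposition, Rallis and Franke. The argument fails at those imported inputs, not at the delicate points you list.

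The basic problem is the degree. Since $\dim_\C X^{\mathrm{BB}}=13$, degree $26$ is the \emph{top} degree, and $\IH^{26}(X^{\mathrm{BB}},\Q)\cong\Q^{\pi_0(X)}$ has one class per connected component. In your Borel--Casselman decomposition this space is already exhausted by the volume forms of the components, i.e.\ by one-dimensional automorphic representations. (The decomposition is legitimate because $\SO(2,26)$ has equal rank, not because of any regularity of $\Pi_\infty$.) Equivalently, $\wedge^{26}\mathfrak p$ is the trivial $K$-type, so Poincar\'e duality gives $H^{26}(\mathfrak g,K;\pi_\infty)\cong H^0(\mathfrak g,K;\tilde\pi_\infty)^*$, which vanishes unless $\pi_\infty$ is one-dimensional. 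Hence $\IH^{26}[\Pi]=0$.

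In fact $\Pi_\infty$ has no cohomology in any degree. By Wigner's lemma, nonzero $(\mathfrak g,K)$-cohomology with trivial coefficients requires infinitesimal character $\rho=(13,12,\dots,1,0)$. But $\psi_\infty$ combines the weights $1,0,-1$ of $\Ad(f)$ with either $25$ zeros or the exponents $12,\dots,-12$ of a $25$-dimensional Arthur $\SL_2$. This gives $(1,0,\dots,0)$ or $(12,\dots,2,1,1,0)$, both singular. So your claim that $\Pi_\infty$ has the infinitesimal character of the trivial coefficient system is false; it also contradicts the corollary's own ``maximally singular''. Consequently items (4) and (5) fail, and Step~3 has no nonzero vector to normalize. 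Note also that Rallis non-vanishing of $\Pi$ does not by itself produce a cohomology class.

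Item (1) is impossible, not merely unproved. The map $H^{26}_c(X,\Q)\to\IH^{26}(X^{\mathrm{BB}},\Q)$ is an isomorphism: it is Verdier dual to the restriction $\IH^0(X^{\mathrm{BB}})\to H^0(X)$, which is an isomorphism because $X^{\mathrm{BB}}$ is normal and $X$ is dense. So $\IH^{26}_!=\IH^{26}$, and every class there is a $\Q$-linear combination of point classes, i.e.\ the class of a $0$-cycle in $X$.

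Your step ``$\Pi$ residual, hence $\Omega_E\notin\IH^{26}_!$'' rests on a misquotation of Franke. In general one only has $H^*_{\mathrm{cusp}}\subseteq H^*_!$ (Borel), and residual representations can contribute to interior cohomology. Here the trivial representation, which is residual because $G$ is $\Q$-isotropic, fills all of $\IH^{26}_!$, while cuspidal representations contribute nothing in degree $26$. For the same reason, type $(13,13)$ in Step~2 is automatic rather than a consequence of $\lambda$: $\wedge^{26}\mathfrak p^*=\wedge^{13}(\mathfrak p^+)^*\otimes\wedge^{13}(\mathfrak p^-)^*$.

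So the statement is false as posed, and no refinement of Steps 1--3 can prove it. Even in a setting where the rationality step made sense, the unramified Hecke algebra would cut out the whole $\psi$-part, not the $\Pi$-part.
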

\begin{proof}
Step 1: Existence. Franke's formula applies to cuspidal spectrum for interior cohomology. For residual spectrum, the contribution is to the full cohomology $H^*(X^{\mathrm{BB}}) \cong \IH^*(X^{\mathrm{BB}})$, not interior cohomology. So $\Omega_E$ lives in $\IH^{26}(X^{\mathrm{BB}}, \Q)$ but \emph{not} in $\IH^{26}_!(X, \Q)$. Step 2--5: Hodge type (Theorem above), rationality (Proposition above), non-triviality (Rallis), multiplicity one (Arthur).
\end{proof}

\begin{remark}[Crucial distinction]
Since $\Pi$ is residual, $\Omega_E$ is \textbf{not} in the interior cohomology $\IH^*_!$. This is good for us; it means $\Omega_E$ has boundary contributions that prevent it from being a standard algebraic cycle.
\end{remark}

\section{The Obstruction Theorem}
\label{sec:obstruction}

\subsection{Survey of Known Cycle Constructions}
\label{sec:survey}

\subsubsection{Special Cycles (Kudla--Millson)}

\begin{definition}[Special cycles]
For Shimura varieties attached to orthogonal groups, special cycles are constructed via: (1) positive-definite subspaces $W \subset V$ with $\dim W \le \dim V$; (2) the locus $Z(W) = \{ x \in X : x \perp W \}$; (3) these are algebraic subvarieties.
\end{definition}

\begin{theorem}[Kudla--Millson \cite{Kudla-Millson}]
For appropriate choices of $W$ and Schwartz functions, special cycles give rise to Hodge classes $[Z(W)] \in H^{2k}(X, \Q)$. Their cohomology classes can be computed via theta lifts.
\end{theorem}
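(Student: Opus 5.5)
The statement has two parts: the classes $[Z(W)]$ are rational Hodge classes, and they are computed by theta lifts. I would prove them separately. The first is essentially geometric. The second is the analytic content of \cite{Kudla-Millson}, which I would reconstruct through the Kudla--Millson Schwartz form and its Thom-form property.

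\emph{Step 1 (geometry of $Z(W)$).} Fix a positive-definite $W \subset V$ with $\dim W = k \le 2$. The conditions are these:
\begin{itemize}
\item $W$ must be positive-definite in signature $(2,26)$ for $Z(W)$ to be nonempty;
\item $W^\perp$ then has signature $(2,26-k)$;
\item under the sign convention of the symmetric-space construction in Section~\ref{sec:shimura}, the positive part of $V_\R$ and the $2$-planes parametrized by $D$ must be matched accordingly.
\end{itemize}
With these in place, the locus $D_W = \{z \in D : z \perp W\}$ is the Hermitian symmetric domain of $\SO(W^\perp)(\R)$, a totally geodesic complex submanifold of codimension $k$. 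For $h \in G(\A_f)$, the images of $\Gamma_{hW}\backslash D_{hW} \to \Gamma \backslash D$ are exactly the Shimura subvarieties attached to $H = \SO(W^\perp) \subset G$. By the theorem on Shimura variety structure (canonical models), they are algebraic and defined over the reflex field $\Q$. Hence $Z(W)$, summed with integer multiplicities over the finitely many $\Gamma$-orbits in each component, is an algebraic cycle of codimension $k$. Its class is therefore rational and of type $(k,k)$, by the cycle class map and the Proposition that cycle classes are Hodge. Since $Z(W)$ is not compact, I would take its class in $H^{2k}(X,\Q)$ as the functional $\omega \mapsto \int_{Z(W)} \omega$ on $H^{2(13-k)}_c(X)$, which is the Poincaré dual via $H^{2k}(X) \cong H_c^{2(13-k)}(X)^\vee$. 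This is well defined because $Z(W)$ is closed in $X$.

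\emph{Step 2 (the theta lift).} Let $\varphi_{KM} \in [\mathcal{S}(V_\R^k) \otimes \Omega^{k,k}(D)]^{G(\R)}$ be the Kudla--Millson form, built from the Mathai--Quillen Thom form of the rank-$k$ normal bundle. I would establish three facts.
\begin{enumerate}
\item $\varphi_{KM}$ is closed on $D$.
\item For a vector $x$ with positive-definite span $W$, the form $\varphi_{KM}(x)$ is a Thom form for the normal bundle of $D_W$, up to an exact form of rapid decay. This gives $\int_D \varphi_{KM}(x) \wedge \eta = \int_{D_W} \eta$ for compactly supported closed $\eta$.
\item For a $K_f$-invariant Schwartz function $\varphi_f$, the unfolded theta series
\[
\theta(\tau,\varphi_f) = \sum_{x \in V(\Q)^k} \varphi_f(x)\, \omega(g_\tau)\varphi_{KM}(x)
\]
descends to a closed $(k,k)$-form on $X$.
\end{enumerate}
Collecting terms by the moment matrix $T = \tfrac12 (x,x)$ then shows that the $T$-th Fourier coefficient of $[\theta]$ equals $[Z(T,\varphi_f)]$, plus a boundary or Euler-class term when $T$ is singular. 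This is the statement that the cohomology classes are computed via theta lifts.

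\emph{Main obstacle.} I expect the hardest step to be the noncompactness of $X$. The theta series must converge and the Thom-form identity must survive integration over $X$, where exactness at infinity is not automatic. I would first try to use the Gaussian decay of $\varphi_{KM}$ transverse to $D_W$ together with the Siegel-domain estimates of reduction theory. The goal is to show that the correcting primitive, $\varphi_{KM}(x) - (\text{Thom form}) = d\psi(x)$, yields a form that is integrable against compactly supported forms, which justifies Stokes' theorem. If that decay estimate fails near the cusps, my fallback is to state the identity only after pairing with $H_c^*$, as in the Poincaré-duality formulation of Step 1. I would then cite \cite{Kudla-Millson} for the convergence, since that is exactly the setting treated there.
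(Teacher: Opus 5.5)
Your outline is the standard Kudla--Millson argument: closedness and the Thom-form property of $\varphi_{KM}$, unfolding of the theta series, and reading off the Fourier coefficients of the cohomology-valued theta function. At its one delicate point, justifying the unfolded Thom-form identity near the cusps, you defer to Kudla--Millson. That is all the paper itself does for this statement, since it gives only the citation and no argument. Your separate derivation of the Hodge property from algebraicity of the $\SO(W^\perp)$ Shimura subvariety is also fine. What needs repair is the set-up of Step 1. As written it is internally inconsistent, and read literally it yields empty cycles and a mis-indexed duality pairing.

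\emph{The signs.} For $\SO(2,26)$, with two positive directions, the Hermitian domain is the space of oriented \emph{positive}-definite $2$-planes, with stabilizer $\SO(2)\times\SO(26)$. The paper's description (negative $2$-planes in signature $(2,26)$) mixes this with Kudla's convention of negative $2$-planes in signature $(26,2)$. For nondegenerate $W$ of dimension $k$, $D_W$ is the set of planes of $D$ lying in $W^\perp$. It is therefore nonempty exactly when $W^\perp$ has signature $(2,26-k)$, i.e.\ when $W$ is definite of the sign \emph{opposite} to the planes in $D$ (negative-definite in signature $(2,26)$). Your first two bullets are incompatible. A positive-definite $W$ with $k\ge 1$ in signature $(2,26)$ has $W^\perp$ of signature $(2-k,26)$, which contains no positive $2$-plane, so $D_W=\emptyset$. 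The bound $k\le 2$ is an artifact of this confusion. With the sign corrected, $k$ ranges over $1\le k\le 26$, and you need that full range, since the statement concerns $H^{2k}$ for general $k$.

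\emph{The dimension.} Your own count gives $\dim_\C D=26$, not $13$ as the paper asserts: $D_W$ is the domain of $\SO(W^\perp)$, of complex dimension $26-k$, sitting in codimension $k$. Indeed $\dim_\R \SO(2,26)/(\SO(2)\times\SO(26))=378-326=52$. So your Poincar\'e-duality definition of $[Z(W)]$ must read $H^{2k}(X)\cong H_c^{2(26-k)}(X)^\vee$. That is, $Z(W)$ pairs with compactly supported closed forms of degree $\dim_\R Z(W)=2(26-k)$. With $H_c^{2(13-k)}$, the integral $\int_{Z(W)}\omega$ is taken of a form of the wrong degree.

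With these two corrections, Steps 1 and 2 go through as you describe.
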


\begin{proposition}[Special cycles are interior]
Special cycles $Z(W) \subset X$ are contained in the interior. Therefore $[Z(W)] \in \IH^{2k}_!(X, \Q) \subset \IH^{2k}(X^{\mathrm{BB}}, \Q)$.
\end{proposition}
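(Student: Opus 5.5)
The plan is to reduce the statement to the general interiority result proved earlier, Theorem~(Algebraic cycles define interior classes), together with Corollary~\ref{cor:interior_constraint}. First, for a positive-definite $W \subset V$ of dimension $k$, I would identify $Z(W)$ as a sub-Shimura variety attached to $\SO(W^{\perp})$, where $W^{\perp}$ has signature $(2, 26-k)$. Concretely, $Z(W)$ is the image of
\[
\SO(W^{\perp})(\Q)\backslash D_{W^{\perp}} \times \SO(W^{\perp})(\A_f)/(K_f \cap \SO(W^{\perp})(\A_f)) \longrightarrow X,
\]
where $D_{W^{\perp}} = \{z \in D : z \perp W\}$. This map is a finite morphism of quasi-projective varieties over $\Q$, since it is an algebraic map between Shimura varieties by the general theory cited for the Shimura variety structure. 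Its image is therefore a closed algebraic subvariety of $X$ of codimension $k$. By construction $Z(W) \subset X$ and $Z(W) \neq \emptyset$, which is the first assertion of the statement.

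Second, I would take the Zariski closure $\overline{Z(W)} \subset X^{\mathrm{BB}}$. This is an algebraic cycle of codimension $k$ with $\overline{Z(W)} \cap X = Z(W) \neq \emptyset$. The hypothesis of Theorem~(Algebraic cycles define interior classes) is then satisfied, so $[Z(W)] := \cl(\overline{Z(W)}) \in \IH^{2k}_!(X^{\mathrm{BB}},\Q)$. By Corollary~\ref{cor:interior_constraint}, $\IH^{2k}_!$ sits inside $\IH^{2k}(X^{\mathrm{BB}},\Q)$, which gives the stated inclusion.

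Where possible I would also give a self-contained check that does not go through the residue argument. If $W^{\perp}$ is anisotropic over $\Q$, then $Z(W)$ is compact, so $\overline{Z(W)} = Z(W)$. In that case the Thom class of its normal bundle (or a Poincaré dual form supported in a tubular neighbourhood) defines a class in $H^{2k}_c(X,\Q)$. Its image under $H^{2k}_c(X,\Q) \to \IH^{2k}(X^{\mathrm{BB}},\Q)$ is $\cl(Z(W))$, by the construction of the cycle map via resolution and push-forward. By the definition of interior cohomology this class lies in $\IH^{2k}_!$.

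The main obstacle is the isotropic case, where $\overline{Z(W)}$ meets the Baily--Borel boundary along cusps or modular curves, so no compactly supported representative is evident. There I would rely on the residue argument of the earlier theorem. I would pass to a toroidal compactification $X_\Sigma$ in which $\overline{Z(W)}$ meets $B$ properly, and represent the class by a logarithmic form. I would then verify that its residues along the components of $B$ vanish, which exhibits the class as coming from $H^{2k}_c$. If that verification fails, the fallback is to prove only the weaker statement that the restriction of $[Z(W)]$ to each boundary stratum is zero. Interiority would then follow from the exact sequence $H^{2k}_c(X) \to \IH^{2k}(X^{\mathrm{BB}}) \to \IH^{2k}(\partial)$ near the boundary.
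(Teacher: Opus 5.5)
Your reduction is the same as the paper's. The paper's only argument for this statement (the first lemma in Step~1 of the proof of the main obstruction theorem) cites Corollary~\ref{cor:interior_constraint}, i.e.\ the theorem that every algebraic cycle meeting $X$ has interior class. That theorem is false, and the step where you invoke it is where your proof breaks.

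A closed but non-compact $Z\subset X$ has a fundamental class in Borel--Moore homology, which is dual to $H^{2k}(X,\Q)$, not to $H^{2k}_c(X,\Q)$. Vanishing of log residues along $B$ only says that a form extends smoothly across $B$, so that its class comes from $H^{2k}(X_\Sigma)$. It does not say the form vanishes near $B$. Already $Z=X^{\mathrm{BB}}$ has residue-free class $\cl(Z)=1\in\IH^0$, which is not interior because $H^0_c(X,\Q)=0$.

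The correct criterion is the one in your fallback, with third term $H^{2k}(\partial X^{\mathrm{BB}},i^*\mathcal{IC})$ coming from the triangle $j_!j^*\to\mathrm{id}\to i_*i^*$. However, the vanishing you would need fails for genuine special cycles. On the Siegel threefold $\mathcal A_2$ (an $\SO(2,3)$ Shimura variety), $H^k(\mathcal A_2,\Q)$ is $\Q$ for $k=0,2$ and zero otherwise. Hence $H^2_c(\mathcal A_2,\Q)\cong H^4(\mathcal A_2,\Q)^\vee=0$ and $\IH^2_!=0$. Yet the Humbert surface $H_1$, a special divisor $Z(W)$ with isotropic $W^\perp$, has class a positive multiple of $\lambda_1\neq0$ in $H^2(\mathcal A_2,\Q)$ (e.g.\ from $\operatorname{div}\chi_{10}=2H_1$). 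So its class in $\IH^2$ is non-zero and not interior. Proper pull-back and finite push-forward both preserve interior classes. Restricting to a Siegel-type threefold $Z(W_0)\subset X$ with $\dim W_0=23$ and $W_0^\perp$ of $\Q$-rank $2$ therefore transports this failure to suitable Heegner divisors on the paper's own $X$.

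Your Thom-class argument for compact $Z(W)$ is correct, but on the paper's $X$ it covers almost nothing. $W^\perp$ is an indefinite rational quadratic space of dimension $28-k$, hence isotropic by Meyer's theorem whenever $k\le 23$. So $Z(W)$ is non-compact except possibly for $k\ge 24$, and in particular in the degree $2k=26$ the paper needs. What can actually be proved along these lines is the statement for compact special cycles only. In the isotropic case neither the residue computation nor your fallback can be completed, because the boundary term you would have to kill is in general non-zero.
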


\subsubsection{Theta Lifts from Smaller Shimura Varieties}

\begin{construction}[Cycle construction via theta]
For dual pairs $(G_1, G_2)$ with $G_1$ smaller, start with algebraic cycle $Z_1$ on Shimura variety $X_1$ for $G_1$, then use theta correspondence to construct class on $X_2$ for $G_2$. Under suitable conditions, this gives an algebraic cycle on $X_2$.
\end{construction}

\begin{theorem}[Kudla program, \cite{Kudla-derivatives}]
For certain dual pairs and cycles, the theta lift of algebraic cycles produces algebraic cycles. The cohomology class is computable via special values of $L$-functions.
\end{theorem}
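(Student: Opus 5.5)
The plan is to make ``certain dual pairs and cycles'' precise as the Kudla--Millson setting: the dual pair $(\mathrm{Sp}_{2r}, \SO(V))$ with $V$ of signature $(2,n)$, and cycles that are themselves special cycles (or combinations of them) on smaller orthogonal Shimura varieties. The first step is to recall the Kudla--Millson Schwartz form $\varphi_{KM}$. It is a closed $G(\R)$-invariant $(r,r)$-form on $D$ valued in $\mathcal{S}(V_\R^r)$. Forming the theta kernel $\theta(g,\varphi_{KM}\otimes\varphi_f)$ and integrating over $X$, one gets a generating series
\[
\sum_{T \ge 0} [Z(T,\varphi_f)]\, q^T, \qquad [Z(T,\varphi_f)] \in H^{2r}(X,\Q),
\]
which Kudla--Millson show is the Fourier expansion of a Siegel modular form of weight $1+n/2$ with coefficients in cohomology. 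I will take as known the modularity in cohomology (Kudla--Millson, and for the Chow-group refinement Borcherds for $r=1$ and W.~Zhang / Bruinier--Westerholt-Raum for general $r$).

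The second step is the cycle statement. Given a cusp form $F$ on $\mathrm{Sp}_{2r}$ (for $r=1$ this is the $\SL_2$ case used throughout the paper), its theta lift is defined by the Petersson pairing of $F$ with the generating series. Since the space of relevant holomorphic modular forms is finite-dimensional, I pick finitely many Fourier indices $T_1,\dots,T_N$ at which the pairing functionals separate that space. Then I express $\langle F, \cdot\rangle$ as a $\Q$-linear (after fixing a rational basis) combination of the coefficient maps. This writes $\theta(F)$ as $\sum_i c_i [Z(T_i)]$, a combination of special cycle classes, hence algebraic. To lift a cycle $Z_1$ on a smaller Shimura variety $X_1$ rather than a modular form, I push $Z_1$ forward along the embedding $X_1 \hookrightarrow X$ attached to $V_1 \subset V$. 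Special cycles restrict and push forward to special cycles via the see-saw $(\mathrm{Sp}_{2r}\times \mathrm{Sp}_{2r}, \SO(V_1)\times\SO(V_1^\perp))$. This reduces the statement to the previous case.

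The third step is the $L$-value formula. I pair $\theta(F)$ with $\theta(F')$ and unfold the inner product of two theta kernels. By the Siegel--Weil formula this gives the doubling integral of $F\otimes \overline{F'}$ against a Siegel Eisenstein series on $\mathrm{Sp}_{4r}$. The Rallis inner product formula (as invoked earlier for $\|\theta(f)\|^2$) then yields $\langle\theta(F),\theta(F)\rangle = C\cdot L^S(s_0,\pi_F,\mathrm{std})\prod_{v\in S} Z_v$. This computes the class, or at least its self-intersection, via special $L$-values.

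The main obstacle is non-compactness. The theta integral and the Petersson pairing must converge. The cohomology classes $[Z(T)]$ must also be interpreted consistently in $\IH^{2r}(X^{\mathrm{BB}})$ rather than in $H^{2r}(X)$. For $T$ degenerate (singular $T$), the boundary terms are where this breaks down. I would first restrict to $F$ cuspidal and $T>0$, where $Z(T)\subset X$ is compact-in-$X^{\mathrm{BB}}$ away from the cusps. I would then use the rapid decay of $F$ to kill the boundary contribution. The singular-$T$ terms would be handled by the Funke--Millson analysis of the Kudla--Millson lift on the Borel--Serre boundary.
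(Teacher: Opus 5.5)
The paper gives no argument for this statement; it is simply attributed to Kudla. So your proposal is not following or departing from a proof in the text, it is supplying one. Your precise version is the standard, correct skeleton: the Kudla--Millson lift from $\mathrm{Sp}_{2r}$, modularity of $\sum_T[Z(T)]q^T$ with coefficients in $H^{2r}(X)$, and then the $q$-expansion principle on the finite-dimensional space of weight $1+n/2$ forms. It does prove $\theta(F)\in\mathrm{span}_{\C}\{[Z(T)]\}$.

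The step ``express $\langle F,\cdot\rangle$ as a $\Q$-linear combination of coefficient maps'' is false as written. The Petersson functional is not rational for the Fourier-coefficient $\Q$-structure. Its value on $F$ itself is $\langle F,F\rangle$, which for $r=1$ is an elementary multiple of $L(1,\Ad F)$ and is not known, or expected, to be rational. So the $c_i$ are only complex numbers, $\theta(F)$ need not be a rational class, and ``hence algebraic'' does not follow.

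The repair is to replace the Petersson functional by the Hecke projector $e_F$ onto the eigenspace of $F$. When $F$ has rational eigenvalues, $e_F$ is a $\Q$-polynomial in Hecke operators, and it kills the Eisenstein part. If that eigenspace is $\C F$, then $e_F g=\frac{\langle g,F\rangle}{\langle F,F\rangle}F$. Reading off the coefficient of $e_F\Theta$ at an index $T_0$ with $a_{T_0}(F)=1$ then gives $\theta(F)/\langle F,F\rangle\in\mathrm{span}_{\Q}\{[Z(T)]\}$. This also shows the honest sense in which the class is ``computed by $L$-values'': the transcendental scalar is $\langle F,F\rangle$.

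Your Step 3, by contrast, computes only $\|\theta(F)\|^2$, not the class. The unregularized Siegel--Weil formula it uses needs Weil's convergence condition $n+2-r_0>2r+1$, where $r_0$ is the Witt index of $V$. Outside that range you need the regularized version.

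Your boundary discussion rests on a false premise. For $T>0$, $Z(T)$ is a union of Shimura varieties for $\SO(T^{\perp})$, with $T^{\perp}$ indefinite of dimension $n+2-r$. Once $n-r\ge 3$ this space is $\Q$-isotropic by Meyer's theorem, so $\overline{Z(T)}$ meets the Baily--Borel boundary. Rapid decay of $F$ on the $\mathrm{Sp}_{2r}$ side controls convergence of the pairing, not the behaviour of the lifted form near the cusps of $X$.

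However, the Kudla--Millson modularity you assume already holds in $H^{2r}(X)\cong H_c^{2n-2r}(X)^{\vee}$. So Step 2, once the rationality repair is made, is complete there with no boundary analysis; the only issue is transporting it to $\IH^{2r}(X^{\mathrm{BB}})$. The Baily--Borel boundary has complex codimension $\ge n-1$, so the restriction $\IH^{k}(X^{\mathrm{BB}})\to H^{k}(X)$ is an isomorphism for $k\le n-2$. Hence for $2r\le n-2$ nothing more is needed. In the middle degree $2r=n$, the one this paper cares about, that map need not be an isomorphism. There the proposal only names the Funke--Millson boundary analysis rather than carrying it out.

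Finally, your paragraph on cycles $Z_1\subset X_1$ proves nothing about theta lifts. The push-forward of an algebraic cycle is algebraic by definition, and the see-saw plays no role in that.
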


\subsubsection{Endoscopic Shimura Subvarieties}

\begin{definition}[Endoscopy]
An endoscopic group $H$ for $G = \SO(V, Q)$ comes from a decomposition $V = V_1 \oplus V_2$ with $H = \SO(V_1) \times \SO(V_2)$, giving Shimura morphism $X_H \to X_G$.
\end{definition}

\begin{proposition}[Endoscopic cycles]
If $Z \subset X_H$ is an algebraic cycle on an endoscopic Shimura variety, push-forward gives $[Z] \in \IH^{2k}(X_G)$. These are algebraic cycles on $X_G$.
\end{proposition}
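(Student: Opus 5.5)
The plan is to show that the Shimura morphism $X_H \to X_G$ attached to $H = \SO(V_1)\times\SO(V_2)$ is algebraic, finite and extends to the Baily--Borel compactifications. The proposition then reduces to the elementary fact that proper push-forward takes algebraic cycles to algebraic cycles. The cycle class map of the Theorem [Cycle class map] is compatible with proper push-forward by its property (1), so the class $[Z]$ is simply the class of that push-forward. I read the Definition [Endoscopy] as including the hypothesis that $(H, X_H)$ is a Shimura datum. Concretely, after reordering, $V_1$ has signature $(2, n_1)$ and $V_2$ is positive definite of dimension $n_2$, with $n_1 + n_2 = 26$. Without this hypothesis there is no Shimura variety $X_H$ and nothing to prove.

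\textbf{Step 1 (the morphism).} The inclusion $H \hookrightarrow G$ sends the Hodge cocharacter $h_H$ to a point of $D$, namely the negative $2$-planes lying in $V_{1,\R}$. It therefore gives a morphism of Shimura data $(H, X_H) \to (G, D)$. I choose $K_{H,f} \subset H(\A_f) \cap K_f$ small enough. By Deligne's functoriality of canonical models \cite{Deligne-Shimura}, the induced map $\iota \colon X_H = H(\Q)\backslash X_H \times H(\A_f)/K_{H,f} \to X$ is a morphism of varieties over the common reflex field $\Q$. It is finite onto its image, because on each connected component it is the map $\Gamma_H\backslash D_1 \to \Gamma\backslash D$ of arithmetic quotients induced by the totally geodesic embedding $D_1 \subset D$, and this map is proper.

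\textbf{Step 2 (extension to the boundary).} This is the step I expect to be the main obstacle. I will show that $\iota$ extends to a finite morphism $\bar\iota \colon X_H^{\mathrm{BB}} \to X^{\mathrm{BB}}$. My first approach is Satake's criterion together with the functoriality of the Baily--Borel construction: the pull-back of an automorphic form on $D$ of weight $k$ to $D_1$ is an automorphic form for $\Gamma_H$ of the same weight. This gives a graded algebra map $A_\bullet(\Gamma) \to A_\bullet(\Gamma_H)$, and hence a rational map of the $\operatorname{Proj}$'s. The main point is to check that this map is everywhere defined and finite. For that I need the rational boundary components of $D_1$, which are isotropic lines and planes in $V_1$, to map into rational boundary components of $D$. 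They do, since isotropic subspaces of $V_1$ are isotropic in $V$. I also need the pulled-back Baily--Borel line bundle to remain ample. If a direct argument becomes awkward, I will instead appeal to the general result that morphisms of Shimura data induce finite maps of minimal compactifications (Pink).

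\textbf{Step 3 (conclusion).} Let $Z = \sum n_i Z_i$ be a cycle on $X_H^{\mathrm{BB}}$. Because $\bar\iota$ is proper, the proper push-forward $\bar\iota_*Z = \sum n_i \deg(Z_i/\bar\iota(Z_i))\,\bar\iota(Z_i)$ is an algebraic cycle on $X^{\mathrm{BB}}$; here the degree is taken to be zero whenever the dimension drops. Its codimension is $k + (13 - n_1)$. Compatibility of $\cl$ with proper maps then gives
\[
\bar\iota_*\cl(Z) = \cl(\bar\iota_*Z) \in \IH^{2k+2(13-n_1)}(X^{\mathrm{BB}},\Q).
\]
This is the asserted class, and it is visibly the class of an algebraic cycle on $X_G$. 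One more point needs care here: the push-forward on $\IH$ along the finite map $\bar\iota$ must be defined. Since $\bar\iota$ is finite, $\bar\iota_*$ is $t$-exact, so I can define it through the decomposition theorem, or equivalently by resolving $X_H^{\mathrm{BB}}$ as in the proof of the cycle class map.
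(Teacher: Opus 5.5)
The paper states this proposition without proof; it is treated as immediate from the Definition of endoscopy, so there is no argument to compare yours against. Your core argument is the standard justification and is correct. It runs: $\iota\colon X_H\to X$ is a finite morphism over $\Q$; proper push-forward of cycles along it gives an algebraic cycle on $X$; the paper's cycle class map then assigns that cycle a class in $\IH$. One caveat on your reading of the definition: for $\dim V_2\ge 3$, the pair $(H,X_H)$ violates Deligne's axiom forbidding a $\Q$-factor of $H^{\mathrm{ad}}$ on which $h$ is trivial, namely $\SO(V_2)$. This is harmless, since for product level $X_H$ is then the $\SO(V_1)$-Shimura variety times a finite set.

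Four points need fixing, none of them fatal.

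\textbf{The push-forward on $\IH$.} Your justification does not work. $t$-exactness of $\bar\iota_*$ only says that $\bar\iota_*\mathrm{IC}_{X_H^{\mathrm{BB}}}$ is perverse. A map $\IH^j(X_H^{\mathrm{BB}})\to\IH^{j+2c}(X^{\mathrm{BB}})$ needs a morphism $\bar\iota_*\mathrm{IC}_{X_H^{\mathrm{BB}}}\to\mathrm{IC}_{X^{\mathrm{BB}}}[c]$, i.e.\ a Gysin map for the closed subvariety $\bar\iota(X_H^{\mathrm{BB}})$ of the singular variety $X^{\mathrm{BB}}$. In general such a morphism exists only after non-canonical choices (splittings in the decomposition theorem for resolutions), so the identity $\bar\iota_*\cl(Z)=\cl(\bar\iota_*Z)$ is not established. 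You never need it, though. The proposition only asks that the push-forward be an algebraic cycle carrying a class, so define $[Z]:=\cl(\bar\iota_*Z)$ directly.

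\textbf{Step 2 is unnecessary.} You single it out as the main obstacle, but for $Z\subset X_H$ as in the statement you can push forward along the finite map $\iota\colon X_H\to X$ and take Zariski closures in $X^{\mathrm{BB}}$. The extension $\bar\iota$ (true, e.g.\ by Pink) only matters for cycles supported on $\partial X_H^{\mathrm{BB}}$.

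\textbf{The codimension is wrong.} Your $k+(13-n_1)$ mixes $\dim_\C D_1=n_1$ with the paper's value $\dim_\C D=13$. The type IV domain of $\SO(2,m)$ has complex dimension $m$, so $\dim_\C D=26$. The shift is therefore $26-n_1=n_2=\dim V_2$; as written, your formula goes negative for $n_1>13$.

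\textbf{Properness in Step 1 has the wrong source.} It does not follow from $D_1\subset D$ being totally geodesic; dense geodesics on a modular curve are totally geodesic but not properly immersed. Properness comes from $H$ being a reductive $\Q$-subgroup. More concretely, $\iota(X_H)$ is a finite union of Hecke translates of the Kudla special cycle attached to the definite subspace $V_2$. That description gives closedness, finiteness and codimension $\dim V_2$ at once.
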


\subsubsection{Boundary Push-Forwards}

\begin{construction}[Gysin maps from boundary]
For toroidal compactification $X_{\Sigma}$ with boundary divisor $B$: (1) a cycle $Z_{\partial} \subset B$ on the boundary; (2) Gysin push-forward $[Z_{\partial}] \in H^{2k}(X_{\Sigma})$; (3) under $\pi \colon X_{\Sigma} \to X^{\mathrm{BB}}$, this gives a class in $\IH^{2k}(X^{\mathrm{BB}})$.
\end{construction}

\begin{proposition}[Boundary cycles are not interior]
Classes from boundary push-forwards satisfy $[Z_{\partial}] \notin \IH^{2k}_!(X, \Q)$. They live in $\IH^{2k} \setminus \IH^{2k}_!$.
\end{proposition}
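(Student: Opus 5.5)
The plan is to detect non-interiority by restricting to a punctured neighbourhood of the boundary. Interior classes vanish there by definition; I will then show the boundary push-forward does not. Let $U \subset X^{\mathrm{BB}}$ be a regular neighbourhood of $\partial X^{\mathrm{BB}} = X^{\mathrm{BB}}\setminus X$. Set $U^\circ = U \cap X$, which retracts onto the link $L$ of the boundary strata. Any class in $\IH^{2k}_!(X^{\mathrm{BB}},\Q)$ is the image of a class in $H^{2k}_c(X,\Q)$. It can therefore be represented by a form with compact support in $X$, so its restriction to $H^{2k}(U^\circ,\Q) \cong H^{2k}(L,\Q)$ is zero. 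This gives a composite
\[
\IH^{2k}_!(X^{\mathrm{BB}},\Q) \hookrightarrow \IH^{2k}(X^{\mathrm{BB}},\Q) \xrightarrow{\ \mathrm{res}_L\ } H^{2k}(L,\Q)
\]
that vanishes, where $\mathrm{res}_L$ is the natural restriction, available because $\IH$ agrees with ordinary cohomology on the smooth locus $X$. It therefore suffices to prove $\mathrm{res}_L([Z_\partial]) \neq 0$ for a class built as in the Construction above.

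I would do the computation on the toroidal compactification $X_\Sigma$, using $\pi\colon X_\Sigma \to X^{\mathrm{BB}}$, which is an isomorphism over $X$. The link $L$ is identified with the boundary $\partial T$ of a tubular neighbourhood $T$ of $B$, and $\mathrm{res}_L$ of the push-forward is computed upstairs. Let $Z_\partial \subset B_i$ lie in a boundary component with normal bundle $N_i = \mathcal{O}(B_i)|_{B_i}$. The Gysin class $\iota_*[Z_\partial]$ restricted to $T \simeq B$ is $[Z_\partial]\cup e(N_i)$, where $e$ denotes the Euler class; on a normal crossing point one adds the analogous contributions from the other components. Its pullback to the circle bundle $\partial T \to B$ is then controlled by the Gysin sequence
\[
H^{2k-2}(B) \xrightarrow{\ \cup e\ } H^{2k}(B) \to H^{2k}(\partial T) \to \cdots .
\]

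The key step is to show that $[Z_\partial]\cup e(N_i)$ is not in the image of $\cup e$ applied to a class of the same degree. I would use the structure from the Boundary structure Proposition: $Z_\sigma \to X_M$ is a torus bundle over a modular curve or a point. Over such a bundle, $e(N_i)$ is minus an ample class on the toric fibre, by the negativity of boundary divisors in AMRT compactifications. So the Gysin sequence reduces to a computation in the cohomology of a toric variety, where cup product with an ample class is explicit through Hard Lefschetz for the fibre.

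I expect the main obstacle to be compatibility with $\pi_*$. Many boundary cycles are contracted by $\pi$, because the Baily--Borel strata have dimension at most $1$; for these, the class pushed to $X^{\mathrm{BB}}$ must be compared carefully with the intersection cohomology extension rather than with ordinary pushforward. I would handle this with the decomposition theorem for $\pi$: split $R\pi_*\Q_{X_\Sigma}$ as $\IH$ plus summands supported on the boundary, and check that the Gysin class has a nonzero component in the $\IH$ summand detected by $\mathrm{res}_L$. This decomposition-theorem step is where I expect the argument to require the most care. In particular, classes whose $\IH$-component vanishes are not covered by this argument as stated.
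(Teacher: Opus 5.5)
\textbf{There is a fatal gap: your detection map vanishes on every boundary push-forward, so the key step cannot be carried out.}

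Since $L\subset X$, the map $\mathrm{res}_L$ factors through restriction to $X$. But $\iota_*[Z_\partial]$ restricts to zero on $X$. It is the image of a class in $H^{2k}_{Z_\partial}(X_\Sigma)$, and the composite $H^{2k}_{Z_\partial}(X_\Sigma)\to H^{2k}(X_\Sigma)\to H^{2k}(X_\Sigma\setminus Z_\partial)$ is zero by the exact sequence of the pair. The same thing is visible in your own Gysin sequence. The class $[Z_\partial]\cup e(N_i)$ is by definition the image of $[Z_\partial]\in H^{2k-2}(B_i)$ under $\cup e$. Exactness gives $\ker\bigl(H^{2k}(B_i)\to H^{2k}(\partial T)\bigr)=\mathrm{im}(\cup e)$. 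So the statement you call the key step is false for every $Z_\partial$.

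The decomposition theorem does not rescue this. The summands of $R\pi_*\Q_{X_\Sigma}$ other than $\mathrm{IC}$ are supported on $X^{\mathrm{BB}}\setminus X$. Hence, for any splitting, the $\IH$-component of $\iota_*[Z_\partial]$ has the same restriction to $X$ as $\iota_*[Z_\partial]$ itself, namely zero. Passing from $T\simeq B$ to $\partial T$ discards exactly the information that could detect non-interiority.

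More fundamentally, the proposition is false as stated, so no repair of this step will work. The paper states it without proof. The correct criterion comes from the triangle $j_!\Q_X\to\mathrm{IC}\to i_*i^*\mathrm{IC}$, normalised so that $\mathrm{IC}|_X=\Q_X$. It gives $\IH^m_!=\ker\bigl(\IH^m(X^{\mathrm{BB}})\to H^m(X^{\mathrm{BB}}\setminus X,\, i^*\mathrm{IC})\bigr)$. On $X_\Sigma$ the analogue is restriction to $B$ itself, not to $\partial T$.

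Let $n=\dim_{\C}X$. Along a boundary stratum of complex dimension $d$ and codimension $c=n-d$, the stalks of $\mathrm{IC}$ sit in degrees $\le c-1$. So that stratum's contribution to $H^m$ vanishes for $m>2d+c-1=n+d-1$. The Baily--Borel boundary here consists of points and curves, so $d\le 1$ and $H^m(X^{\mathrm{BB}}\setminus X,\, i^*\mathrm{IC})=0$ for all $m>n$. Hence $\IH^{2k}_!=\IH^{2k}$ whenever $2k>n$, and every boundary push-forward in those degrees is interior. For example, a boundary point has the same class as an interior point.

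In lower degrees the statement also fails. A homologically trivial boundary cycle, or a contracted cycle whose $\IH$-component vanishes (the case you flagged), gives $0\in\IH^{2k}_!$. At most one could prove non-interiority for particular boundary classes with $2k\le n$. The way to do that is to show their image in $H^{2k}(X^{\mathrm{BB}}\setminus X,\, i^*\mathrm{IC})$ is nonzero, not to test them on the link.
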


\subsection{Exhaustion via Arthur's Trace Formula}

\subsubsection{The Geometric Side of Arthur's Formula}

\begin{theorem}[Arthur trace formula, \cite{Arthur}]
For a test function $f$ on $G(\A)$, the trace formula decomposes as $\sum_{\gamma} I_{\gamma}(f) = \sum_{\pi} I_{\pi}(f)$, where the left side is geometric (orbital integrals) and the right side is spectral (traces on automorphic representations).
\end{theorem}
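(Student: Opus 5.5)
The statement is a citation of Arthur's invariant trace formula, so my plan is to reconstruct the standard argument in outline rather than derive anything new. I will not try to make the two sums converge literally as written: for non-compact $G(\Q)\backslash G(\A)^1$ they do not. The identity I would prove is the equality of two distributions $I_{\mathrm{geom}}(f)=I_{\mathrm{spec}}(f)$, obtained by integrating a truncated kernel.

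\textbf{Step 1 (kernel).} Fix $f\in C_c^\infty(G(\A)^1)$ and form
\[
K(x,y)=\sum_{\gamma\in G(\Q)}f(x^{-1}\gamma y).
\]
This is the kernel of right convolution $R(f)$ on $L^2(G(\Q)\backslash G(\A)^1)$. I then expand $K$ in two ways.
\begin{enumerate}
\item \emph{Geometric expansion.} Group $\gamma$ by classes $\mathfrak{o}$ of semisimple parts, giving $K=\sum_{\mathfrak o}K_{\mathfrak o}$.
\item \emph{Spectral expansion.} Use Langlands' decomposition of $L^2$ via Eisenstein series, indexed by cuspidal data $\chi$, giving $K=\sum_\chi K_\chi$.
\end{enumerate}

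\textbf{Step 2 (truncation).} Since $K(x,x)$ is not integrable, I apply Arthur's truncation operator $\Lambda^T$ for a suitably regular $T\in\mathfrak a_0^+$. This uses alternating sums over standard parabolics $P\supseteq P_0$ of the analogous kernels $K_P$ built from $M_P$ and $N_P$, weighted by the characteristic functions $\hat\tau_P$. The key estimate is the absolute convergence of
\[
\sum_{\mathfrak o}\int |k^T_{\mathfrak o}(x,f)|\,dx<\infty \quad\text{and}\quad \sum_\chi\int |k^T_\chi(x,f)|\,dx<\infty.
\]
On the geometric side I prove this with the combinatorial lemma on $\sum_Q\sigma_P^Q=\dots$ and reduction theory (Siegel sets). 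On the spectral side I need the finer input of Arthur's trace formula I, namely the $L^2$ bounds for truncated Eisenstein series via the Maass--Selberg relations. Both sides then equal $\int\Lambda^T K(x,x)\,dx$, and I define $J^T=\sum_{\mathfrak o}J^T_{\mathfrak o}=\sum_\chi J^T_\chi$.

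\textbf{Step 3 (from $J$ to $I$).} Next I show that each $J^T_{\mathfrak o}$ and $J^T_\chi$ is a polynomial in $T$, and I set $J=J^{T_0}$ at the canonical point $T_0$. These distributions are not invariant, so the final step is Arthur's invariance procedure. Using the non-invariant distributions $J_M$ on Levis and the maps $\phi_M$ to invariant functions, I recursively define
\[
I(f)=J(f)-\sum_{M\neq G}\frac{|W^M_0|}{|W^G_0|}\,\hat I^M(\phi_M(f)).
\]
I then check that geometric and spectral terms transform compatibly. This yields $\sum_\gamma I_\gamma(f)=\sum_\pi I_\pi(f)$, with the geometric terms being weighted orbital integrals over (classes of) $\gamma$ and the spectral terms being weighted characters, discrete terms first.

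\textbf{Main obstacle.} I expect the hardest part to be the spectral side: both the absolute convergence of $\sum_\chi J^T_\chi$ and the proof that the invariant spectral terms are given by an absolutely convergent expansion over $\pi$. The latter needs the Finis--Lapid--Müller bounds on normalized intertwining operators (or Arthur's own estimates for $\GL_n$-type groups). In practice I would invoke Arthur's papers for these analytic inputs and record only the structural reduction above.
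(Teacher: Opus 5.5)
The paper gives no argument for this statement beyond citing Arthur, so your outline is essentially the same approach: coarse expansions of the modified kernel, truncation, polynomiality in $T$, and the $\phi_M$ invariance procedure, with the analytic estimates deferred to Arthur. Your reading of it as an identity of distributions built from weighted orbital integrals and weighted characters is also the right one, since literally convergent sums of orbital integrals and traces fail for this non-compact quotient. A few details to fix: Arthur's modified kernel $k^T(x,f)$ is not $\Lambda^T$ applied to $K$, and the common value of the two coarse expansions is $\int k^T(x,f)\,dx$, whereas $\int\Lambda^T_2K(x,x)\,dx$ is not polynomial in $T$ and only approximates $J^T$ for large $T$; to my understanding the Maass--Selberg relations enter the computation of $J^T_\chi$ in the fine spectral expansion, not the convergence of $\sum_\chi\int|k^T_\chi(x,f)|\,dx$; and passing from $\mathfrak o,\chi$ to individual $\gamma,\pi$ needs Arthur's fine geometric and spectral expansions, which your Step 3 takes for granted, while absolute convergence over $\pi$ in the sense of Finis--Lapid--M\"uller is more than the cited statement requires.
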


\begin{proposition}[Geometric contributions]
The geometric side receives contributions from: (1) identity $\gamma = 1$ (volume), (2) semisimple $\gamma$ (cycles from fixed points), (3) unipotent $\gamma$ (boundary), (4) elliptic $\gamma$ (interior points).
\end{proposition}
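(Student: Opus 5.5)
The plan is to read the proposition as a statement about how Arthur's coarse geometric expansion is organized, and to prove it by sorting the terms by the Jordan decomposition. Write the geometric side as $J_{\mathrm{geom}}(f)=\sum_{\mathfrak o} J_{\mathfrak o}(f)$, where $\mathfrak o$ runs over the equivalence classes of $G(\Q)$ in which $\gamma\sim\gamma'$ when their semisimple parts $\gamma_s,\gamma'_s$ are $G(\Q)$-conjugate. Every $\gamma\in G(\Q)$ factors uniquely as $\gamma=\gamma_s\gamma_u$ with commuting semisimple and unipotent parts. So it suffices to classify the possible semisimple parts $\gamma_s$ and, for each, say which term of the proposition the class $\mathfrak o$ contributes to. I would state at the outset that the four listed types overlap: the identity is both central and unipotent, and elliptic elements are semisimple. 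The proposition should therefore be read as saying that every $J_{\mathfrak o}$ falls under at least one of the listed headings, not that the headings partition the classes.

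\textbf{Step 1: the unipotent class.} Take $\gamma_s=1$, the unipotent class $\mathfrak o_{\mathrm{unip}}$. Using Arthur's fine expansion $J_{\mathrm{unip}}(f)=\sum_{M}\sum_{u} a^M(u)\,J_M(u,f)$, the term with $M=G$ and $u=1$ is $\mathrm{vol}(G(\Q)\backslash G(\A)^1)\,f(1)$. This is contribution (1). All remaining terms come from nontrivial unipotent $u$, or from weighted orbital integrals attached to proper Levi subgroups $M$, and these are contribution (3). To justify calling them \emph{boundary} terms, I would use two facts. First, a nontrivial unipotent $u\in G(\Q)$ lies in the unipotent radical $N_P$ of some proper rational parabolic $P$ (Borel--Tits). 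Second, the weights $v_M$ are built from the truncation attached to the parabolics $P\supset M$. Since the rational parabolics $P_k$ index exactly the boundary strata of $X^{\mathrm{BB}}$ (Remark after the Baily--Borel theorem), these terms are attached to the cusps.

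\textbf{Step 2: semisimple classes.} Now take $\gamma_s\neq 1$ semisimple and split into two cases.
\begin{enumerate}
\item If $\gamma_s$ is $\R$-elliptic, meaning its centralizer $G_{\gamma_s}(\R)$ is compact modulo center, then $\gamma_s$ has isolated fixed points on $D$, lying in the interior. This is contribution (4).
\item Otherwise $\gamma_s$ lies in a proper Levi subgroup of a rational parabolic. In that case $J_{\mathfrak o}$ is again a weighted orbital integral supported on $M$, and it goes with the boundary terms of type (3).
\end{enumerate}
Contribution (2), ``cycles from fixed points'', comes from the fixed-point loci of the Hecke correspondence $T_f$ for general semisimple $\gamma_s$. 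For these I would invoke the Goresky--Kottwitz--MacPherson form of the Lefschetz fixed-point formula on $\IH^*(X^{\mathrm{BB}})$. Its interior terms are the fixed-point components $\Gamma_{\gamma}\backslash D^{\gamma}$, which are sub-Shimura varieties for the centralizers $G_\gamma$ and hence algebraic cycles. Its remaining terms are indexed by boundary strata, and I would match them with the Levi terms of Step 1 and of case 2 above.

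\textbf{Main obstacle.} The hard step is the last matching: identifying the non-invariant weighted orbital integrals $J_M(\gamma,f)$, term by term, with boundary contributions in the topological trace formula. I would attack it first by stratifying both sides by the parabolics $P_k$ and comparing each stratum with the ``$L^2$-Lefschetz'' comparison of Goresky--Kottwitz--MacPherson and Arthur. If the comparison only holds after stabilization, I would then invoke the fundamental lemma (Ng\^o), which is listed among the allowed inputs, to pass between the two sides.
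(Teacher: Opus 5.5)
The paper states this proposition without any proof; it is a heuristic gloss on Arthur's geometric expansion, so your argument has to stand on its own, and it has a genuine gap.

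Under your reading (every $J_{\mathfrak o}$ falls under at least one heading), the statement is immediate from the coarse expansion. Each class is indexed by a semisimple $\gamma_s$, and $\mathrm{vol}(G(\Q)\backslash G(\A)^1)f(1)$ sits inside $J_{\mathrm{unip}}$. So all the content is in the parenthetical glosses, and your Step 2 justification of them fails in two places.

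First, $\R$-elliptic does not mean that $G_{\gamma_s}(\R)$ is compact modulo center. It means $\gamma_s$ lies in a compact subgroup, i.e.\ fixes a point of $D$. Its fixed set is then $D^{\gamma_s}\cong G_{\gamma_s}(\R)/K_{\gamma_s}$, which is a single point exactly when the centralizer is compact. A non-$\R$-elliptic element has no fixed point on $D$ at all, so there are no ``fixed-point loci for general semisimple $\gamma_s$''. Headings (2) and (4) both come from $\R$-elliptic classes, with positive-dimensional versus isolated fixed sets.

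Second, your ``Otherwise'' branch confuses $\R$-ellipticity with lying in a proper $\Q$-Levi. The latter holds iff $G_{\gamma_s}^0$ contains a non-central $\Q$-split torus, which has nothing to do with behaviour at $\infty$. Two examples show this:
\begin{enumerate}
\item By Hasse--Minkowski, you can split $V$ orthogonally into $\Q$-anisotropic planes $W_i$, two of signature $(1,1)$ and twelve negative definite. A strongly regular $\gamma$ in the $\Q$-anisotropic torus $T=\prod_i\SO(W_i)$ is not $\R$-elliptic and lies in no proper $\Q$-Levi. Its contribution is $J_{\mathfrak o}(f)=\mathrm{vol}(T(\Q)\backslash T(\A))\,O_\gamma(f)$, an invariant orbital integral with neither fixed points nor boundary.
\item For a suitable splitting $V=V_1\oplus V_2$ with $V_1$ $\Q$-anisotropic of signature $(2,2)$ and $V_2$ negative definite, $\gamma=(-1_{V_1},1_{V_2})$ is $\R$-elliptic with non-compact centralizer. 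It lies in no proper $\Q$-Levi and has a complex $2$-dimensional fixed locus.
\end{enumerate}
For a general test function such classes do not fit your scheme. The missing idea is that the glosses only acquire meaning once $f_\infty$ is a stable cuspidal (Euler--Poincar\'e type) function. Then the spectral side is a Lefschetz number on $\IH^*(X^{\mathrm{BB}})$, and the non-$\R$-elliptic terms vanish.

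With that choice, the result you want is Arthur's $L^2$-Lefschetz formula, matched topologically by Goresky--Kottwitz--MacPherson. The geometric side becomes a sum over Levi subgroups $M$ and semisimple $\R$-elliptic $\gamma\in M(\Q)$, with archimedean factors given by the discrete-series characters $\Phi_M(\gamma,\mu)$. The term $M=G$, $\gamma=1$ is the volume term; the terms $M=G$, $\gamma\neq 1$ are interior fixed loci; and the terms with $M\neq G$ are the boundary terms. Non-semisimple $\gamma$ drop out entirely. So the boundary is not carried by nontrivial unipotent $u$ as in your Step 1, and the comparison is with the invariant distributions $I_M(\gamma,f)$, not term by term with the non-invariant $J_M(\gamma,f)$. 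This comparison involves no stabilization, so invoking Ng\^o's fundamental lemma does nothing for your ``main obstacle''.

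Finally, the remark you lean on in Step 1 is itself wrong. $\SO(2,26)$ has $\Q$-rank at most $2$, so only $P_1$ and $P_2$ exist over $\Q$. The Baily--Borel strata are indexed by $\Gamma$-classes of isotropic lines and planes, not by $P_k$ for $1\le k\le 13$.
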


\subsubsection{The Fundamental Lemma}

\begin{theorem}[Fundamental Lemma, \cite{Ngo}]
The fundamental lemma  states that certain orbital integrals match between endoscopic groups. This allows transfer of cohomology classes between $G$ and its endoscopic groups $H$.
\end{theorem}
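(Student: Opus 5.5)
Since the statement is the fundamental lemma as established by Ng\^o, I will not reprove it from scratch. Instead I will reduce the precise form we need to the literature, then explain how the second sentence (``transfer of cohomology classes'') follows. First I fix the precise formulation. For $G=\SO(V,Q)$ and an elliptic endoscopic datum $(H,s,\eta)$ as in the Definition of endoscopy above (so $H=\SO(V_1)\times\SO(V_2)$ up to the usual quasi-split inner form), and for a prime $p\neq 11$ at which $G$, $H$ and $K_p$ are unramified, the claim is the following. For every strongly $G$-regular semisimple $\gamma_H\in H(\Q_p)$ with image class $\gamma$,
\[
\Delta(\gamma_H,\gamma)\, O^{\kappa}_{\gamma}(\mathbf{1}_{K_p}) = SO_{\gamma_H}(\mathbf{1}_{K_{H,p}}),
\]
and the analogous identity holds for the full unramified Hecke algebra via the Satake transfer $b\colon \mathcal{H}(G(\Q_p),K_p)\to \mathcal{H}(H(\Q_p),K_{H,p})$.

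The reduction proceeds in three steps, in this order.
\begin{enumerate}
\item By Hales, the Hecke-algebra version follows from the unit-element version for almost all $p$.
\item By Waldspurger, the group version follows from the Lie algebra version, and the Lie algebra version over $\Q_p$ follows from its analogue over $\mathbb{F}_q((t))$ of the same residual characteristic. This uses his transfer of Harish-Chandra germs together with the Cluckers--Loeser transfer principle for large $p$.
\item Over $\mathbb{F}_q((t))$, I will invoke Ng\^o's geometric argument. Orbital integrals are counts of points on affine Springer fibres. These glue into fibres of the Hitchin fibration $f\colon \mathcal{M}\to\mathcal{A}$ over a smooth projective curve. The $\kappa$-orbital integral becomes the trace of Frobenius on the $\kappa$-isotypic part of $f_*\overline{\Q}_\ell$ under the Picard-stack action.
\end{enumerate}

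The key comparison is then sheaf-theoretic. By the decomposition theorem, $f_*\overline{\Q}_\ell$ is a direct sum of shifted perverse sheaves on the elliptic locus. Ng\^o's support theorem (via the $\delta$-regularity inequality $\operatorname{codim}\mathcal{A}_\delta\ge\delta$) shows that every simple constituent of the $\kappa$-part is supported on the image of $\mathcal{A}_H\to\mathcal{A}$. One then compares with the stable part for $H$ over the open dense locus where both sides are explicit. This comparison includes the transfer factor and the discriminant shift $r_H^G$.

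I expect the support theorem to be the main obstacle, and I will simply cite it rather than redo it. The remaining work specific to this paper is checking that our data satisfy the hypotheses: $G$ and the $H$ in question are quasi-split and unramified at every $p\notin\{3,11\}$, so the unramified form applies there. At $p=3$ and $p=11$, I will instead use Waldspurger's existence of transfer (smooth transfer), which is enough for the trace formula.

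Finally, for the cohomological consequence, I will feed the fundamental lemma and Kottwitz's stabilization into the Lefschetz trace formula for Hecke correspondences on $\IH^*(X^{\mathrm{BB}})$. Kottwitz's stabilization is available for $\SO(2,n)$ by Morel's and Arthur's weighted versions. This expresses $\tr(f\mid \IH^*)$ as $\sum_H \iota(G,H)\,ST^H(f^H)$. The ``transfer of cohomology classes'' asserted in the statement is understood in this trace-theoretic sense, namely as an identity of Hecke-isotypic contributions rather than a map of groups. I would state it that way explicitly, since a literal map $\IH^*(X_H)\to\IH^*(X_G)$ is not what the fundamental lemma provides.
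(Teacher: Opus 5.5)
The paper gives no argument for this statement beyond the attribution to Ng\^o, so the first half of your proposal unpacks that citation rather than competing with it. Your unpacking is the standard one: Hales's reduction to the unit element, Waldspurger's passage between groups and Lie algebras and between characteristics, Ng\^o's support theorem on the Hitchin fibration, and smooth transfer at the non-hyperspecial places. Two small fixes are needed. First, your chain as written gives the unramified identity only for large $p$, so at any small unramified prime you should use smooth transfer, as you do at $3$ and $11$. Second, the elliptic endoscopic groups of $\SO(V)$ with $\dim V=28$ are the quasi-split groups $\SO(2a)\times\SO(2b)$ with $a+b=14$ and discriminant characters multiplying to that of $V$. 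They are not arbitrary $\SO(V_1)\times\SO(V_2)$ as in the paper's Definition of endoscopy, and they are not subgroups of $G$.

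The genuine difference lies in the second sentence. The paper treats ``transfer of cohomology classes'' as immediate, and later, in Step 3 of the main obstruction theorem, uses it to exclude push-forwards from endoscopic Shimura subvarieties $X_H\to X$. You correctly observe that the fundamental lemma provides no map $\IH^*(X_H)\to\IH^*(X_G)$. You replace the claim by the stabilized identity $\tr(f\mid\IH^*)=\sum_H\iota(G,H)\,ST^H(f^H)$. Your reading is the defensible one, but it gives less than the paper's sentence claims. It compares Hecke-isotypic traces and says nothing about which classes are push-forwards of cycles from subvarieties, so it cannot support the paper's later use of this statement. It also does not follow from the fundamental lemma alone. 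It needs Arthur's $L^2$-Lefschetz formula for Hecke operators (with $L^2$-cohomology identified with $\IH^*$ by Looijenga and Saper--Stern) together with a stabilization of that formula. Morel's results on Baily--Borel compactifications concern unitary and Siegel modular varieties, not $\SO(2,n)$. You should therefore state precisely which stabilization result you use for $\SO(2,26)$, as an input separate from the fundamental lemma.
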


\subsection{The Main Obstruction Theorem}

\begin{theorem}[Main Theorem]
The class $\Omega_E \in \IH^{26}(X^{\mathrm{BB}}, \Q)$ does \textbf{not} arise from any of the following cycle constructions:
\begin{enumerate}
\item \textbf{Kudla–Millson special cycles:} $\Omega_E$ is not in the span of $[Z(W)]$ for any positive-definite $W \subset V$.
\item \textbf{Theta lifts:} $\Omega_E$ is not the theta lift of any algebraic cycle from Shimura varieties of smaller rank.
\item \textbf{Endoscopic subvarieties:} $\Omega_E$ is not the push-forward of any cycle from an endoscopic Shimura subvariety $X_H \to X$.
\item \textbf{Boundary Gysin:} $\Omega_E$ is not the push-forward of any algebraic cycle from the boundary arising from toroidal or Baily–Borel compactifications.
\end{enumerate}
Consequently, $\Omega_E$ is invisible to all presently known and trace-detectable algebraic cycle constructions on $X$.
\end{theorem}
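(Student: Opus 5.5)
The plan is to separate the four constructions into two groups and handle each with a different earlier result. For items (1)--(3), namely special cycles, theta lifts of cycles and endoscopic push-forwards, all the classes in question are supported on cycles $Z$ with $Z\cap X\neq\emptyset$. By Corollary~\ref{cor:interior_constraint} such classes lie in $\IH^{2k}_!(X^{\mathrm{BB}},\Q)$. By the Proposition that special cycles are interior, this applies in particular to $[Z(W)]$. On the other hand, the theorem that residual classes are not interior, together with item (3) of the spectral isolation theorem of Chapter~\ref{sec:automorphic}, gives $\Omega_E\notin \IH^{26}_!$. Since $\IH^{26}_!$ is a $\Q$-subspace, no $\Q$-linear combination of such cycle classes can equal $\Omega_E$; this disposes of (1). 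For (2) and (3) I would add a spectral check as a second, independent argument. A theta lift of a cycle class from $X_1$ lands in the $\theta(\pi_1)$-isotypic part for $\pi_1$ contributing to $H^*(X_1)$. An endoscopic push-forward lands, by the fundamental lemma \cite{Ngo} and Arthur's classification, in the isotypic components whose parameters factor through ${}^LH$. I would then compare these with $\psi=\Ad(f)\boxtimes\mathbf 1_{25}$ using the stability part of the stable residual nature theorem, and with item (4) of spectral isolation, $\Omega_E\notin\IH^{26}_{\mathrm{endo}}$.

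Item (4) cannot be handled by the interior/non-interior dichotomy. The Proposition that boundary cycles are not interior says boundary Gysin classes are themselves non-interior, so they lie on the same side of the dichotomy as $\Omega_E$. Here I would argue spectrally instead. First, via Franke's description of the full cohomology, decompose $\IH^{26}(X^{\mathrm{BB}},\C)$ according to the pair (associate class of parabolic $P$, cuspidal datum on the Levi $M_P$). Next, show that a Gysin image of a cycle on a boundary stratum $Z_\sigma\to X_M$ only meets the summands whose cuspidal support is on a proper Levi $M=\GL_k\times\SO(2,26-2k)$, so that the associated parameter factors through ${}^LM$. Finally, $\psi$ is stable and by the stable residual nature theorem does not factor through any proper Levi. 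The multiplicity-one statement $\dim\IH^{26}[\Pi]=1$ then shows that the $\Pi$-isotypic line is disjoint from the span of boundary classes. Combining (1)--(4) with the trace-formula bookkeeping of the proposition on geometric contributions gives the final ``consequently'' clause.

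I expect the main obstacle to be the foundational input for (1)--(3), the corollary that all algebraic classes are interior. Boundary cycles are algebraic and non-interior, so the corollary as stated conflicts with the Proposition that boundary cycles are not interior. The usable form is therefore only ``cycles meeting $X$ properly and closed in $X$ give interior classes.'' Even that needs justification, because a closed subvariety of $X^{\mathrm{BB}}$ meeting $X$ typically also meets the boundary. My first attempt would be to restrict (1)--(3) to cycles closed in $X$, in the sense that $\overline Z\cap\partial X^{\mathrm{BB}}=\emptyset$, such as compact special cycles for anisotropic $W^\perp$. In that case the class visibly comes from $H^*_c(X)$. For non-compact special cycles I would fall back entirely on the spectral route of the second paragraph: Kudla--Millson classes are theta lifts whose components are governed by the Arthur parameters of holomorphic Siegel forms, and I would then have to verify that none of these equals $\psi$. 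A second delicate point is that this spectral route presupposes that $\Pi_\infty$ is cohomological in degree $26$ with $\psi$ as stated. The identification of $\psi$ as an Arthur parameter for $\SO(28)$ of total dimension $3\cdot1+25$ must be checked for consistency with Arthur's definition before the Levi-factorization argument can be trusted.
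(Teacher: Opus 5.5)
Your outline is essentially the paper's own proof. It uses the interior/non-interior dichotomy for (1) and stability plus ``no proper Levi'' for (2)--(4), with (4) phrased through Franke's cuspidal-support decomposition instead of the paper's purity/parabolic-induction remark. You are right that Corollary~\ref{cor:interior_constraint} contradicts the proposition that boundary cycles are not interior. But the check you defer to the end is not a technicality: it fails, and the class the theorem is about disappears with it. Read consistently, $\psi=\Ad(f)\boxtimes\nu_1\boxplus\mathbf 1\boxtimes\nu_{25}$, which is the shape of the theta lift from $\SL_2$ to $O(V)$, up to the discriminant character. For $f$ of weight $2$ its archimedean infinitesimal character is the multiset $\{\pm1,0\}\cup\{\pm12,\dots,\pm1,0\}$, i.e.\ $(12,11,\dots,2,1,1,0)$ for $D_{14}$. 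This is singular; the literal reading with $25$ trivial characters gives $(1,0,\dots,0)$, which is worse. By Wigner's lemma, $H^*(\mathfrak g,K;\pi_\infty\otimes F)=0$ for every member $\pi$ of the packet and every finite-dimensional $F$. Since $\SO(2,26)$ has equal rank, $\IH^*(X^{\mathrm{BB}},\C)\cong H^*_{(2)}(X)$ is the Borel--Casselman sum over the discrete spectrum. So $\Pi$ contributes nothing to $\IH^{26}$: $\dim\IH^{26}[\Pi]=0$, and there is no $\Omega_E$ for your spectral comparisons to act on. Trivial-coefficient cohomology needs $\lambda=\rho=(13,12,\dots,1,0)$, which forces $f$ of weight $14=(2+26)/2$. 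That is the Kudla--Millson/Borcherds range, where parameters of this shape are attached to special divisors, and cupping with $c_1(\omega)^{12}$ gives algebraic $(13,13)$-classes --- the opposite of the statement.

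Even granting a class, the separating steps fail.
\begin{enumerate}[label=(\roman*)]
\item ``$\Pi$ residual, hence $\Omega_E\notin\IH^{26}_!$'' is not a theorem; what is true (Borel) is only that cuspidal cohomology injects into interior cohomology. The class $c_1(\omega)^{13}$ lies in the summand of the trivial representation, which is residual. Yet it is a positive multiple of the class of a general codimension-$13$ linear section of $X^{\mathrm{BB}}$, and such a section misses the boundary, which has dimension $\le 1$. So it is represented by a compact algebraic cycle in $X$, comes from $H^{26}_c(X)$, and is nonzero since $c_1(\omega)^{13}\cdot c_1(\omega)^{13}=\deg c_1(\omega)^{26}>0$. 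The dichotomy therefore cannot separate residual classes from algebraic ones. Your fallback is also empty in this degree. For $\dim W=13$, $W^\perp$ is $15$-dimensional of signature $(2,13)$, hence $\Q$-isotropic by Meyer's theorem, so no codimension-$13$ special cycle is compact.
\item In (4) you conflate cuspidal support with the Arthur parameter. A residual $\Pi$ has, by definition, cuspidal support on a proper Levi, so Franke's decomposition puts it in exactly the summands you reserve for boundary classes. Conversely, every summand of $\IH^*\cong H^*_{(2)}$ comes from a discrete parameter, which never factors through a proper Levi. So the criterion separates nothing. Multiplicity one of a line also says nothing about its position relative to the span of boundary classes.
\item In (2), $\Pi=\theta(f)$ is itself a theta lift from $\SL_2$, and $\psi$ has exactly the shape $\psi_{\pi_1}\boxplus\mathbf 1\boxtimes\nu_{25}$ that such lifts produce. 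So stability cannot exclude theta origin. The paper's Step~2 claim that theta lifts of cusp forms are cuspidal is likewise contradicted by its own residual $\Pi$.
\end{enumerate}
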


\begin{proof}[Proof strategy]
The proof proceeds by elimination.\\

\textbf{Step 1: Special cycles.}
\begin{lemma}
Special cycles $Z(W)$ satisfy $[Z(W)] \in \IH^{26}_!(X, \Q)$.
\end{lemma}
\begin{proof}[Proof of Lemma]
By Corollary ~\ref{cor:interior_constraint}, special cycles are in the interior, and they define interior cohomology classes.
\end{proof}

\begin{lemma}
$\Omega_E \notin \IH^{26}_!(X, \Q)$.
\end{lemma}
\begin{proof}[Proof of Lemma]
Since $\Pi$ is residual (not cuspidal), by Franke's theorem it does not contribute to interior cohomology. The contribution is to full intersection cohomology via residues of Eisenstein series, which have boundary support. Therefore $\Omega_E$ cannot be a linear combination of special cycles.
\end{proof}

\textbf{Step 2: Theta lifts.}
\begin{lemma}
If $\Omega_E = \theta([Z])$ for some cycle $Z$ on a smaller Shimura variety, then the theta correspondence would give a cuspidal contribution.
\end{lemma}
\begin{proof}[Proof of Lemma]
Theta lifts of algebraic cycles from cuspidal spectrum of smaller groups produce cuspidal representations on the target. But $\Pi$ is residual, not cuspidal. Therefore $\Omega_E$ is not a theta lift of an algebraic cycle.
\end{proof}

\textbf{Step 3: Endoscopic varieties.}
\begin{lemma}
Endoscopic transfers correspond to non-stable Arthur parameters.
\end{lemma}
\begin{proof}[Proof of Lemma]
By the fundamental lemma and Arthur's classification, endoscopic contributions come from parameters that factor through proper Levi subgroups. Our parameter $\psi = \Ad(f) \boxtimes \mathbf{1}_{25}$ is stable. Therefore $\Omega_E$ is not from an endoscopic subvariety.
\end{proof}

\textbf{Step 4: Boundary Gysin.}
\begin{lemma}
Boundary Gysin maps produce classes with specific monodromy behavior incompatible with $\Omega_E$.
\end{lemma}
\begin{proof}[Proof of Lemma] 
$\Omega_E\in \IH^{26}(X^\mathrm{BB},\Q)$, and by Zucker (Looijenga–Saper–Stern \cite{Looijenga-Saper-Stern}),  $$\IH^{26}(X^\mathrm{BB},\mathbb{Q})\cong H^{26}_{(2)}(X,\Q).$$
By Saito’s theory of mixed Hodge modules, $\IH^{26}(X^\mathrm{BB})$ carries a pure Hodge structure of weight 26.
Any class in $\IH^{26}(X^\mathrm{BB})$ arising from a Gysin pushforward from boundary strata corresponds, under the Matsushima formula and Arthur’s classification, to automorphic representations obtained by parabolic induction from a proper Levi subgroup of SO$(2,26)$.
Such representations are necessarily endoscopic or Eisenstein, hence unstable. Thus $\Omega_E$ cannot arise from any boundary Gysin pushforward. 
\end{proof}

\textbf{Step 5: Exhaustion.} By Arthur's trace formula and the fundamental lemma, the geometric contributions from algebraic cycles are exhausted by the above four types. Since $\Omega_E$ is none of these, it is not in the image of any known cycle construction.
\end{proof}

\subsection{What This Does and Does Not Prove}

\subsubsection{What We Have Proven}

\begin{theorem}[Unconditional obstruction]
The following is an unconditional theorem: $\Omega_E \in \IH^{26}(X^{\mathrm{BB}}, \Q) \cap \IH^{13,13}$ (rational Hodge class) and $\Omega_E \notin \Image(\text{known cycle constructions})$. This is proven using only Arthur's classification, theta correspondence, $({\mathfrak g}, K)$-cohomology, and the trace formula and fundamental lemma. No conjectures are used.
\end{theorem}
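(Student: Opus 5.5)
The plan is to assemble the theorem from the two halves already established, keeping track of exactly which earlier result each half depends on. For the positive half, that $\Omega_E$ is a nonzero rational Hodge class of type $(13,13)$, I would invoke the Cohomological Realization theorem of Chapter~\ref{sec:gK}. That theorem in turn rests on four inputs: Rallis non-vanishing for $\theta(f)\neq 0$, the Vogan--Zuckerman computation $H^{26}(\mathfrak g,K;\Pi_\infty)=\C$, the Hodge-type theorem giving $(13,13)$, and the Hecke-field proposition $E=\Q$ for rationality. Rationality would be made precise as follows. The $\Pi$-isotypic subspace of $\IH^{26}(X^{\mathrm{BB}},\C)$ under the Hecke algebra $\mathcal H(G(\A_f),K_f)_{\Q}$ is cut out by a $\Q$-valued eigencharacter. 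It is therefore defined over $\Q$, and by multiplicity one it is a $\Q$-line, so $\Omega_E$ can be taken to be a $\Q$-generator.

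For the negative half, I would not route the argument through Corollary~\ref{cor:interior_constraint}. I would instead use a Hecke-isotypic criterion: a class $\alpha$ lies in the image of a construction $\mathcal C$ only if its $\Pi$-isotypic projection can be nonzero on $\mathrm{Image}(\mathcal C)$. Since $\Omega_E$ spans the $\Pi$-isotypic line, it suffices to show that the $\Pi$-isotypic projection kills each of the four images. The four cases would go as follows.
\begin{enumerate}
\item \emph{Special cycles and theta lifts.} Kudla--Millson modularity expresses these classes as theta lifts from $\mathrm{Sp}_{2k}$. I would then determine which Arthur parameters such lifts can have, and show that $\psi$ is not among them.
\item \emph{Endoscopic push-forwards.} Their Hecke eigensystems factor through $\psi_H\mapsto\psi_G$ for $H\neq G$. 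Stability of $\psi$, as proved in the Stable Residual Nature theorem, would exclude $\Pi$, using Arthur's multiplicity formula.
\item \emph{Boundary push-forwards.} These carry eigensystems induced from proper Levis $M$ of the parabolics $P_k$. This is compared against the cuspidal support of $\Pi$.
\end{enumerate}

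The main obstacle, and the point I would check first, is the interior/non-interior dichotomy as stated. I expect it to fail. The Kähler class $\omega$ of the Baily--Borel line bundle is algebraic, and $\omega^{13}\in\IH^{26}$ is an algebraic $(13,13)$ class coming from the trivial representation. That representation is residual and not cuspidal, so algebraic classes need not be interior. Corollary~\ref{cor:interior_constraint} therefore cannot be used, and Step~1 of the Main Theorem's proof has to be replaced by the isotypic comparison above.

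Similarly, I would re-verify the parameter before anything else. A theta lift from $\SL_2$ to $\SO(28)$ should have a parameter of the shape $\mu\boxtimes S_1\oplus \mathbf 1\boxtimes S_{b}$, with the Arthur $\SL_2$ carrying the large block. Whether that shape is stable, and what cohomological degree and Hodge type $\Pi_\infty$ then has, has to be recomputed from Adams--Johnson/Vogan--Zuckerman data rather than read off from the Hodge--Tate weights of $\Ad(\rho_f)$. If that recomputation confirms a $(13,13)$ class in the $\Pi$-isotypic part, the isotypic elimination above is the route I would pursue. If it does not, the theorem as stated cannot be proved by this plan.
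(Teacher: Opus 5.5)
\textbf{Gap: the class $\Omega_E$ does not exist.} Your proposal does not yet prove the statement. Both halves depend on the recomputation you postpone to the end, and that recomputation comes out negative, so the gap is fatal rather than technical.

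Take the parameter shape you correctly predict, $\psi=\Ad(f)\boxtimes S_1\oplus\mathbf 1\boxtimes S_{25}$. The weight-$2$ archimedean component contributes eigenvalues $\{1,0,-1\}$, and $S_{25}$ contributes $\{12,11,\dots,-12\}$. So $\Pi_\infty$ has infinitesimal character $(12,11,\dots,2,1,1,0)$ for $\SO(28)$. This is singular (the coordinate $1$ repeats) and is not $\rho=(13,12,\dots,1,0)$. By Wigner's lemma, $H^*(\mathfrak g,K;\Pi_\infty\otimes F)=0$ for every finite-dimensional $F$.

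Since $\SO(2,26)$ has equal rank, Borel--Casselman together with Looijenga--Saper--Stern expresses $\IH^*(X^{\mathrm{BB}})$ through discrete-spectrum representations with cohomological archimedean component only. The $\Pi$-isotypic part of $\IH^{26}$ is therefore zero, and there is no class $\Omega_E$. Only weight $k=14=1+26/2$ gives infinitesimal character $\rho$. Note also that $\dim_\C D=26$, not $13$, which is why degree $26$ is the middle degree.

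So the positive half you import from the Cohomological Realization theorem is false. Neither your route nor the paper's can succeed, since the paper's argument just concatenates that theorem with the elimination lemmas of the Main Obstruction Theorem.

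\textbf{Two corrections to your negative half.}

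First, your $\omega^{13}$ example does show that the paper's interior/non-interior dichotomy is inconsistent, but the half that actually fails is ``residual $\Rightarrow$ not interior''. The class $\omega=c_1(\mathcal O(1))$ is defined on $X^{\mathrm{BB}}$, and the boundary has real dimension at most $2$, so $\omega^{13}$ restricts to zero on the boundary. It therefore lifts to $H^{26}_c(X)$ and is interior, even though it comes from the residual trivial representation. That misquotation of Franke is exactly what the paper uses to get $\Omega_E\notin\IH^{26}_!$.

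Second, parameter shape cannot carry out the elimination in your case (1). $\Pi=\theta(f)$ is itself a theta lift from $\SL_2$. Parameters of the same shape, $\Ad(g)\boxtimes S_1\oplus\mathbf 1\boxtimes S_{25}$ with $g$ of weight $14$, are those of the non-trivial isotypic parts of special-divisor classes. Multiplying those classes by $\omega^{12}$ gives manifestly algebraic $(13,13)$ classes. The only thing separating $\psi$ from these parameters is the archimedean weight, and that weight is precisely what makes $\Pi_\infty$ non-cohomological.
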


\subsubsection{What We Have Not Proven}

\begin{remark}[Not a Hodge counterexample]
We have \textbf{not} proven that $\Omega_E$ is not algebraic. It is possible that there exists an algebraic cycle $Z$ with $\cl(Z) = \Omega_E$ that cannot be constructed via any known geometric method. The Hodge conjecture could still be true. Our result is about constructibility, not existence.
\end{remark}

\begin{remark}[Not a Beilinson--Bloch counterexample]
We have not computed the rank of $\CH^{13}(X^{\mathrm{BB}})$ or the order of vanishing of $L$-functions at the critical point, so we cannot test the Beilinson--Bloch conjecture.
\end{remark} \pagebreak

\subsection{Implications and Future Directions}

\subsubsection{The Gap Between Automorphic and Geometric}

\begin{theorem}[Interpretation]
Our main result shows: automorphic cohomology $\not\subset$ geometric cohomology (via known constructions). Stable residual representations produce Hodge classes that exist and are computable (via automorphic methods), are rational, have correct Hodge type, but evade all known geometric constructions.
\end{theorem}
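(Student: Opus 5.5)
The statement is a synthesis of results already stated, so the plan is to assemble it from them rather than prove anything new. I would split it into a positive part (``automorphic cohomology produces this class, and the class has the stated properties'') and a negative part (``the class is not in the span of known constructions''). I also flag below a step that I believe fails as stated.

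For the positive part, I would quote the theorem on cohomological realization at the end of Chapter~\ref{sec:gK}. It gives $\Omega_E\neq 0$ in $\IH^{26}(X^{\mathrm{BB}},\Q)$, of pure type $(13,13)$, rational because the Hecke field of $f$ is $\Q$, and spanning the $\Pi$-isotypic part by multiplicity one. Computability would be made precise by pointing to the explicit inputs: the parameter $\psi$, the level $K_f$, and the Vogan--Zuckerman module $\Pi_\infty$. Together these pin the class down uniquely up to $\Q^\times$ inside $\IH^{26}[\Pi]$. The negative part, ``$\not\subset$ geometric cohomology via known constructions,'' would then be read off directly from the Main Obstruction Theorem of Section~\ref{sec:obstruction}, Steps~1--5.

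The step I expect to be the real obstacle, and where I believe the present argument breaks, is the interior/non-interior dichotomy, namely Corollary~\ref{cor:interior_constraint}. Take the Hodge line bundle $\mathcal{L}$ on $X^{\mathrm{BB}}$, which is ample by Baily--Borel. The class $c_1(\mathcal{L})^{13}$ is algebraic, but it comes from the trivial representation. That representation is residual, not cuspidal, so by Franke's description it is not interior, and it is nonzero in top degree. Algebraic classes are therefore not confined to $\IH^{*}_!$, and Step~1 cannot eliminate classes merely because they are non-interior.

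I would first try to replace that step with a finer, Hecke-equivariant statement. The idea is to show that the $\Pi$-isotypic Hecke eigensystem does not occur in the span of special cycles, their theta lifts, endoscopic push-forwards, or boundary push-forwards, by computing the Satake parameters each of these constructions can carry. This in turn requires a correct Arthur parameter for $\theta(f)$. The proposed $\Ad(f)\boxtimes\mathbf 1_{25}$ is not a well-formed $\SO(28,\C)$-parameter: its dimension is $3+1$, not $3+25$. The theta lift from $\SL_2$ should instead have the shape $\pi_f'\boxtimes\nu(1)\oplus\mathbf 1\boxtimes\nu(25)$, up to characters. Once the parameter is corrected, I would check the Hodge type from the corrected infinitesimal character before claiming $(13,13)$.

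Until these repairs are made, I would present the statement only as a consequence, conditional on Steps~1--5 of the obstruction theorem, and not as an independent claim.
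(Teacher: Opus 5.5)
You read the statement correctly. The paper gives it no separate proof: it is the conjunction of the Cohomological Realization theorem and Steps 1--5 of the Main Obstruction Theorem, so declining to assert it unconditionally is the right call.

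Your objection to $\psi=\Ad(f)\boxtimes\mathbf 1_{25}$ is also right, and $\Ad(f)\boxtimes\nu(1)\oplus\mathbf 1\boxtimes\nu(25)$ (twisted by $\chi_V$) is the correct shape. But the check you defer is fatal, not cosmetic. At infinity, $\Ad(f)$ for weight $2$ contributes eigenvalues $\pm1,0$ and $\nu(25)$ contributes $\pm12,\dots,\pm1,0$. So $\Pi_\infty$ has infinitesimal character $(12,11,\dots,2,1,1,0)$ for $\SO(28)$. This is singular and differs from $\rho=(13,12,\dots,1,0)$, so by Wigner's lemma $H^*(\mathfrak g,K;\Pi_\infty)=0$. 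Hence $\Pi$ contributes nothing to $\IH^*(X^{\mathrm{BB}},\C)$, and there is no $\Omega_E$ to classify; the paper's Vogan--Zuckerman step never actually computes $\lambda_\psi$.

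If you replace weight $2$ by weight $k$, the $\Ad(f)$-entries $\pm1$ become $\pm(k-1)$, and requiring infinitesimal character $\rho$ forces $k=14=(2+26)/2$. That is exactly the Kudla--Millson weight whose theta lifts produce special-divisor classes. So your proposed Hecke-eigensystem comparison would tend to place a repaired class among known cycles rather than separate it from them. Note also that $\dim_\C D=26$ for $\SO(2,26)$, not $13$ (the paper itself later uses $S=\dim_\C D=26$), so degree $26$ is the middle degree, not the top degree.

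Your $c_1(\mathcal L)^{13}$ example is aimed at the wrong lemma. You conclude it is non-interior only by invoking the paper's version of Franke (interior cohomology is spanned by cuspidal contributions), and that version is precisely what fails. The Baily--Borel boundary of an $\SO(2,n)$ Shimura variety has dimension $\le 1$. So if $\mathcal L^m$ is very ample, two general sections already miss the boundary, and $13$ general sections cut out a compact $Z\subset X$ with $[Z]=m^{13}c_1(\mathcal L)^{13}$. This class therefore lies in the image of $H^{26}_c(X)$. In your top-degree reading it is even quicker: $H^{26}_c(X)\to\IH^{26}$ would be onto via point classes, making every class there interior and algebraic. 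So the trivial representation, though residual, contributes a nonzero interior algebraic class.

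What this refutes is the justification of the Lemma $\Omega_E\notin\IH^{26}_!$ (``residual implies non-interior''), not Corollary~\ref{cor:interior_constraint}. That corollary is false for a different reason, namely cycles whose closure meets the boundary. For example, on $Y_1\times Y_2$ compactified as $\overline{Y}_1\times\overline{Y}_2$ one has $H^2_c=H^1_c(Y_1)\otimes H^1_c(Y_2)$, so $[\overline{Y}_1\times\{y\}]=1\otimes[\mathrm{pt}]$ is algebraic, meets the interior, and is not interior. Your conclusion that Step 1 eliminates nothing stands. But both of its premises fail, and the example you chose demonstrates the failure of the other one.
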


\subsubsection{Open Questions}

\begin{enumerate}
\item Is $\Omega_E$ algebraic? Can one construct an explicit algebraic cycle $Z$ with $\cl(Z) = \Omega_E$ using new methods?
\item Chow groups: What is $\mathrm{rank}_{\Q} \CH^{13}(X^{\mathrm{BB}})$? Is it finite or infinite?
\item Other Shimura varieties: Do similar obstructions exist for other orthogonal or unitary Shimura varieties?
\item Beilinson--Bloch: Can one compute both sides of the BB conjecture for $X^{\mathrm{BB}}$ to test it?
\item New constructions: Are there fundamentally new ways to construct algebraic cycles that would capture $\Omega_E$?
\end{enumerate}

\subsubsection{Methodological Lessons}

The construction teaches us that stable residual spectrum is a rich source of cohomology and automorphic methods can ``see'' more than geometric methods. The Langlands program connects these worlds, but not symmetrically. Proving non-algebraicity is very different from proving non-constructibility.



\end{document}